\numberwithin{equation}{section}
\numberwithin{figure}{section}
\newtheoremstyle{theoremstyle}
  {10pt}      
  {5pt}       
  {\itshape}  
  {}          
  {\bfseries} 
  {:}         
  {.5em}      
  {}          
\newtheoremstyle{examplestyle}
  {10pt}      
  {5pt}       
  {}          
  {}          
  {\bfseries} 
  {:}         
  {.5em}      
  {}          
\theoremstyle{theoremstyle}
\newtheorem{theorem}{Theorem}[section]
\newtheorem*{theorem*}{Theorem}
\newtheorem{lemma}[theorem]{Lemma}
\newtheorem{proposition}[theorem]{Proposition}
\newtheorem*{proposition*}{Proposition}
\newtheorem{corollary}[theorem]{Corollary}
\newtheorem*{corollary*}{Corollary}
\newtheorem{definition}[theorem]{Definition}
\newtheorem{definition*}{Definition}
\newtheorem{remark}[theorem]{Remark}
\newtheorem{remark*}{Remark}
\newcommand{\sfA}{\mathsf{A}}
\newcommand{\sfB}{\mathsf{B}}
\newcommand{\E}{\mathsf{E}}
\newcommand{\M}{\mathsf{M}}
\newcommand{\sfN}{\mathsf{N}}
\newcommand{\D}{\mathscr{D}}
\newcommand{\caC}{{\mathcal C}}
\newcommand{\caD}{{\mathcal D}}
\newcommand{\caE}{{\mathcal E}}
\newcommand{\caN}{{\mathcal N}}
\newcommand{\caQ}{{\mathcal Q}}
\newcommand{\caT}{{\mathcal T}}
\newcommand{\caH}{{\mathcal H}}
\newcommand{\caM}{{\mathcal M}}
\newcommand{\MM}{{\mathscr M}}
\newcommand{\Char}{\mathsf{char}}
\newcommand{\KGL}{\mathsf{KGL}}
\newcommand{\KQ}{\mathsf{KQ}}
\newcommand{\MGL}{\mathsf{MGL}}
\newcommand{\MU}{\mathsf{MU}}
\newcommand{\kgl}{\mathsf{kgl}}
\newcommand{\kq}{\mathsf{kq}}
\newcommand{\MZ}{\mathsf{M}\mathbb{Z}}
\newcommand{\unit}{\mathbf{1}}
\newcommand{\Z}{\mathbb{Z}}
\newcommand{\N}{\mathbb{N}}
\newcommand{\Sm}{\mathbf{Sm}}
\newcommand{\SH}{\mathbf{SH}}
\newcommand{\Ho}{\mathbf{Ho}}
\newcommand{\Mod}{\mathrm{Mod}}
\newcommand{\Hom}{\mathrm{Hom}}
\newcommand{\map}{\mathrm{map}}
\newcommand{\eff}{\mathbf{eff}}
\newcommand{\Veff}{\mathbf{Veff}}
\newcommand{\hocolim}{\mathrm{hocolim}}
\newcommand{\sSet}{\mathsf{sSet}}
\newcommand{\Spt}{\mathsf{Spt}}
\newcommand{\scK}{\mathscr{K}}
\newcommand{\scS}{\mathscr{S}}
\newcommand{\Coll}{\mathsf{Coll}}
\newcommand{\Oper}{\mathsf{Oper}}
\newcommand{\Alg}{\mathsf{Alg}}
\newcommand{\Ass}{\mathit{\mathcal{A}ss}}
\newcommand{\Com}{\mathit{\mathcal{C}om}}
\newcommand{\modu}{\mathit{\mathcal{M}od}}
\newcommand{\LMod}{\mathit{\mathcal{L}Mod}}
\newcommand{\Mor}{\mathit{\mathcal{M}or}}
\newcommand{\Map}{\mathrm{Map}}
\newcommand{\End}{\mathrm{End}}
\title{{\bf Motivic slices and colored operads}}
\author{Javier J. Guti\'errez, Oliver R\"ondigs, Markus Spitzweck, Paul Arne {\O}stv{\ae}r}
\date{December, 2010}
\begin{document}
\maketitle
\begin{abstract}
Colored operads were introduced in the 1970's for the purpose of studying homotopy invariant algebraic structures on topological spaces.
In this paper we introduce colored operads in motivic stable homotopy theory.
Our main motivation is to uncover hitherto unknown highly structured properties of the slice filtration.
The latter decomposes every motivic spectrum into its slices, 
which are motives, 
and one may ask to what extend the slice filtration preserves highly structured objects such as algebras and modules.
We use colored operads to give a precise solution to this problem.
Our approach makes use of axiomatic setups which specialize to classical and motivic stable homotopy theory.
Accessible $t$-structures are central to the development of the general theory.
Concise introductions to colored operads and Bousfield (co)localizations are given in separate appendices.
\end{abstract}

\newpage
{\small{\tableofcontents}}
\newpage

\section{Introduction}
\label{section:introduction}

Our main goal in this paper is to prove preservation of algebra and module structures in the slice filtration 
\begin{equation*}
\cdots
\subset
\Sigma^{q+1}_{T}\SH(S)^{\eff}
\subset
\Sigma^{q}_{T}\SH(S)^{\eff}
\subset
\Sigma^{q-1}_{T}\SH(S)^{\eff}
\subset
\cdots
\end{equation*}
introduced by Voevodsky \cite{voevodsky.open}. 
Here $\SH(S)^{\eff}$ denotes the effective motivic stable homotopy category of a separable noetherian base scheme $S$.
The basic idea behind the slice filtration is to decompose motivic spectra into computable pieces called the slices.
One may therefore view the slice filtration as an analogue of the Postnikov tower in stable homotopy theory. 
In order to encode and control the behavior of highly structured algebras and modules in the slice filtration, 
we employ techniques afforded by colored operads. 
Although the motivic setup is our main motivation,  
we develop an axiomatic setup which allows for a wider use of our main results.
This was prompted by the current interest in slice techniques, 
as exemplified by the solution of the Kervaire invariant one problem \cite{HHR}.

Knowledge of the slices of a motivic spectrum $\E$ provides valuable input in Voevodsky's slice spectral sequence abutting to the 
motivic homotopy groups of $\E$. 
This fact has motivated considerable work on the slices of algebraic $K$-theory $\KGL$, 
algebraic cobordism $\MGL$, the sphere spectrum $\unit$ and hermitian $K$-theory $\KQ$.
We shall use these examples to illustrate some applications of our main results.
Motivic twisted $K$-theory introduced in \cite{SOmotivivtwisted} provides another major motivation for the writing of this paper.
Indeed, 
the constructions of the Atiyah-Hirzebruch type of spectral sequences for motivic twisted $K$-theory are based on results for motivic 
$E_{\infty}$-algebras which are shown here in greater generality.

By \cite{pelaez}, \cite{spitzweck.slices-landweber}, 
the slice filtration can be viewed as a lax symmetric monoidal functor for the motivic stable homotopy category
\begin{equation}
\label{equation:totalslice}
s_*
\colon
\SH(S) 
\longrightarrow 
\SH(S)^\Z.
\end{equation}
Up to homotopy, 
this shows that $s_*$ preserves algebras, commutative algebras, and modules over such algebras.
Suppose $\E$ is underlying cofibrant and the unit $\unit\to\E$ is a map between effective motivic symmetric ring spectra which 
induces an isomorphism on the $0$-slices.
In \cite{pelaez} it is shown that the slices of every motivic spectrum are highly structured $\E$-modules.
When the base scheme is a field of characteristic zero, 
it follows that the slices are motives, 
i.e.~strict modules over the motivic Eilenberg-Mac\,Lane spectrum $\MZ$ \cite{Rondigs-Ostvar1}, \cite{Rondigs-Ostvar2}.
However, 
in general, 
the techniques in \cite{pelaez} appear to be impractical to answer questions concerning the preservation of algebra 
and module structures.
In this paper we shall develop a theory which enables such a treatment.
Colored operads is now a well-developed branch of abstract homotopy theory with many applications. 
One of the main innovations of this paper is to establish a link between colored operads and motivic homotopy theory.
This allows to discuss preservation of algebras and modules in the slice filtration.
The setup in which we shall prove our main results is comprised of the homotopy category of a stable combinatorial simplicial symmetric 
monoidal model category relative to either a pair of full subcategories or an integer indexed family of full subcategories,
which are subject to a few natural intertwining axioms.

Next we describe in more details the sections of this paper and the manner in which its aims are achieved.
To begin with,
Section \ref{section:axiomatics} introduces the axiomatic setups alluded to earlier in this introduction.
Inspired by the slice filtration in motivic homotopy theory we contemplate a well-behaved homotopy category $\caC$ relative to either a 
pair of full subcategories or an integer indexed family of full subcategories.
Every accessible $t$-structure on $\caC$ gives rise to colocalization and localization functors corresponding to the full subcategories.
These functors, denoted by $c_i$ for colocalization and $l_i$ for localization, 
give rise to the slice functors $s_i$ and the slice towers, 
which for any object $X$ of $\caC$ and $i\in\Z$ comprise exact triangles 
\begin{equation*}
c_{i+1}X
\rightarrow
c_{i}X
\rightarrow
s_{i}X
\rightarrow
c_{i+1}X[1]
\end{equation*}
in the homotopy category.
On the level of model categories, 
the localization functors are modelled by fibrant replacements and the colocalization functors by cofibrant replacements.  
For a pair $\caD_{1}\subset\caD_{0}$ of full subcategories we introduce its core,
which can be identified with the localization of $\caD_{0}$ along $l_{1}$-equivalences.
Here $l_{1}$ denotes the localization functor corresponding to $\caD_{1}$.
It turns out that many properties of highly structured objects are related to properties of cores.
For example, 
the core associated to the module category of an $E_{\infty}$-algebra is a symmetric monoidal category.
In Section \ref{jkmbdsjwgvkj} we give examples of the axiomatic setup arising in classical and motivic stable homotopy theory.

We defer the precise formulations of our main results to Section \ref{hfgdfg}. 
What follows is a sample of applications for motivic spectra.
Over a base field of characteristic zero, 
the mod-$p$ motivic Eilenberg-Mac\,Lane spectrum $\mathsf{M}\mathbb{Z}/p$ is an $E_{\infty}$-algebra whose underlying object is cellular, 
and there is an isomorphism 
$$
s_*(\mathsf{M}\mathbb{Z}/p \wedge \mathsf{M}\mathbb{Z}/p) 
\cong 
\mathsf{M}\mathbb{Z}/p \wedge_{s_* \unit} \mathsf{M}\mathbb{Z}/p.
$$
Thus the dual mod-$p$ rigid Steenrod algebra is isomorphic to a derived tensor product over the total slice $s_*\unit$ of the sphere spectrum.
We show that $s_*\unit$ is an $E_{\infty}$-algebra in $\SH(S)^\Z$ and (\ref{equation:totalslice}) factors through its derived category $\D(s_*\unit)$.
If $\E$ is a motivic $E_\infty$-ring spectrum, 
then its ``connective cover'' $f_0 \E$ and $0$-slice $s_0\E$ are also motivic $E_\infty$-ring spectra.
Moreover, 
for any $\E$-module $\MM$, 
its $(q-1)$-connective cover $f_q \MM$ is an $f_0\E$-module and its $q$th slice $s_q \MM$ is an $s_0\E$-module.
Over a perfect field, 
$s_0\unit=\MZ$ by work of Levine \cite{Levine-slices} and Voevodsky \cite{Voevodsky-zeroslices}.
The $E_\infty$-structure furnished on $s_0\unit$ by our methods coincides with the usual one on $\MZ$.
In fact the $E_\infty$-structure on $\MZ$ is unique over perfect fields.
For $\kgl=f_{0}\KGL$, 
connective algebraic $K$-theory, 
there is a naturally induced map between $E_\infty$-ring spectra 
$$
\kgl 
\to 
\MZ,
$$
which is reminiscent of the Chern class for complex vector bundles.
The analogue for $\kq=f_{0}\KQ$, 
connective hermitian $K$-theory,
is a naturally induced map between $E_\infty$-ring spectra 
$$
\kq 
\to 
\MZ \vee \bigvee_{i<0} \Sigma^{2i,0}\MZ/2,
$$
which is reminiscent of the total Stiefel-Whitney class for real vector bundles.
In this example we assume the base scheme is a perfect field $k$ of characteristic $\Char(k)\neq 2$ and $\sqrt{-1}\in k$.

Detailed proofs of our main results are written in Section \ref{hfgdfg}.
Preparatory material on algebras is given in Section \ref{lkmmhgh}.
Additional examples of interest in motivic homotopy theory are treated in Section \ref{asgfcdj}.

The language of colored operads is admittedly not well-known to practitioners of motivic homotopy theory.
For the convenience of the reader we have included Appendix \ref{jnhgd} on colored operads and algebras, 
and Appendix \ref{ytrgvv} on Bousfield localizations.
In some of the proofs we find it convenient to employ the machinery of $\infty$-categories \cite{Lurie}.

\section{Axiomatic settings}
\label{section:axiomatics}

In this section we describe axiomatic settings in which we shall prove statements about preservation of highly structured algebras and modules.
For precursors on $t$-structures we refer the reader to \cite{BBD}.
We begin by recalling some model categorical definitions.

A model category $\caM$ is called \emph{stable} if it is pointed and the suspension operator is invertible in the homotopy category $\Ho(\caM)$. 
If $\caM$ is stable, 
then $\Ho(\caM)$ is triangulated and acquires products and coproducts over arbitrary index sets derived from those of $\caM$. 
A \emph{combinatorial} model category is a locally presentable and cofibrantly generated model category.
A model category is called \emph{proper} if weak equivalences are preserved by pullback along fibrations and by pushout along cofibrations.

\subsection{A graded family of subcategories}
\label{hygrf}

Let $\caM$ be a stable combinatorial simplicial symmetric monoidal model category. 
Assume that $\caM$ is proper and its unit for the monoidal structure is cofibrant. 
Denote by $\caC=\Ho(\caM)$ the homotopy category of $\caM$.
Let $\{\caC_i\mid i\in\Z\}$ be a family of full subcategories of $\caC$ satisfying the following axioms 
(which are easily checked in practice):
\begin{enumerate}
\item[(A1)] The subcategory $\caC_i$ is closed under isomorphisms for every $i\in\Z$.
\item[(A2)] $\caC_{i+1} \subset \caC_i$ for every $i \in \Z$.
\item[(A3)] Every $\caC_i$ is generated by homotopy colimits and extensions by a set of objects $\mathscr{K}_i$, 
i.e., $\caC_i$ is the smallest full subcategory of $\caC$ that contains $\mathscr{K}_i$ and is closed under homotopy colimits and extensions.
\item[(A4)] The tensor unit is in $\caC_0$.
\item[(A5)] If $X \in \caC_i$ and $Y \in \caC_j$, then $X \otimes Y \in \caC_{i+j}$.
\end{enumerate}

An object $X$ in $\caC$ is called \emph{effective} if it lies in $\caC_0$.
The subcategories $\caC_i$ are not necessarily closed under desuspensions.
This makes closure under extensions an interesting condition.

\begin{proposition} \label{htgrfd}
Suppose $\caN$ is a stable combinatorial model category and $\caH \subset \Ho(\caN)$ a full subcategory which is generated by homotopy colimits 
and extensions by a set of objects. 
Then $\caH$ is the homologically non-negative part of a $t$-structure on $\Ho(\caN)$. 
Moreover, there exists a set of objects that generates $\caH$ by homotopy colimits.
\end{proposition}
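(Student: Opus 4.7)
The plan is to deduce the existence of the $t$-structure from the construction of accessible $t$-structures on presentable stable $\infty$-categories. Since $\caN$ is a stable combinatorial model category, its underlying $\infty$-category $\caN_{\infty}$ is presentable and stable, and the triangulated category $\Ho(\caN)$ is its homotopy $1$-category. The hypothesis furnishes a small set $\scK \subset \caH$ that generates $\caH$ in $\Ho(\caN)$ under homotopy colimits and extensions; lifting to $\caN_{\infty}$, I let $\caH_{\infty}$ be the smallest full subcategory of $\caN_{\infty}$ containing chosen lifts of $\scK$ and closed under small colimits and extensions. The first step is the bookkeeping verification that the essential image of $\caH_{\infty}$ in $\Ho(\caN)$ is precisely $\caH$; this uses the standard fact that homotopy colimits in $\Ho(\caN)$ are images of colimits in $\caN_{\infty}$, and similarly for cofiber sequences, hence for extensions.

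With this translation in place, I invoke the construction of accessible $t$-structures from a set of generators in a presentable stable $\infty$-category, developed in \cite{Lurie}: the smallest full subcategory containing a small set of objects and closed under colimits and extensions is always the connective part of an accessible $t$-structure. Applied to $\caN_{\infty}$ with the generators above, this produces an accessible $t$-structure whose non-negative part equals $\caH_{\infty}$. Descending to homotopy $1$-categories endows $\Ho(\caN)$ with a $t$-structure whose homologically non-negative part is $\caH$.

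For the concluding ``moreover'' assertion, I rely on the fact that the connective part of an accessible $t$-structure in a presentable stable $\infty$-category is itself presentable: accessibility of the truncation reflector $\tau_{\geq 0}$ forces $\caH_{\infty}$ to be a presentable $\infty$-category. Every presentable $\infty$-category is generated under colimits by a small set of objects---for instance, the $\kappa$-compact objects for any regular $\kappa$ witnessing its presentability. Transporting such a set back to $\Ho(\caN)$ yields the desired set of homotopy-colimit generators for $\caH$, with no recourse to extensions.

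The principal obstacle I anticipate is the translation step at the end of the first paragraph: one must make precise the sense in which ``closed under homotopy colimits in $\Ho(\caN)$'' corresponds to ``closed under small colimits in $\caN_{\infty}$''. The natural model-categorical interpretation---where one works with cofibrant diagrams in $\caN$ and computes their colimits---renders the two conditions equivalent, but a purely triangulated reading of the hypothesis would be ambiguous and must be pinned down before the presentable $\infty$-categorical machinery can be applied.
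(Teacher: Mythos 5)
Your proposal is correct and follows essentially the same route as the paper: pass to the underlying presentable stable $\infty$-category, invoke Lurie's construction of accessible $t$-structures from a small set of generators closed under colimits and extensions (\cite[Proposition 16.1]{lurie.DAGI}), and use that the resulting connective part is presentable, hence generated under colimits alone by a set. The paper's proof is terser and leaves the model-category-to-$\infty$-category translation (that homotopy colimits and extensions in $\Ho(\caN)$ correspond to colimits and cofiber sequences in $\caN_\infty$) implicit, whereas you flag it explicitly as the main point requiring care; that caution is well placed but does not constitute a difference of method.
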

\begin{proof}
Since $\caN$ is a stable combinatorial model category its associated $\infty$-category is stable and presentable; 
for the latter, see \cite[Remark 5.5.1.5, Proposition A.3.7.6]{Lurie}.
Recall that every presentable $\infty$-category is generated by homotopy colimits by a set of objects.
Thus our claim follows from \cite[Proposition 16.1]{lurie.DAGI} which asserts the existence of such a $t$-structure.
\end{proof}

The $t$-structures of $\Ho(\caN)$ arising in this way are called \emph{accessible $t$-structures} in \cite{lurie.DAGI}.
It follows from Proposition \ref{htgrfd} that every $\caC_i$ is the homologically non-negative part of some $t$-structure on $\caC$. 
Denote the corresponding colocalization and localization functors by $c_i$ and $l_i$, respectively. 
Thus, 
for any $X$ in $\caC$ and every $i\in\Z$, 
there is an exact triangle
\begin{equation*}
c_i X \longrightarrow X \longrightarrow l_i X \longrightarrow c_i X[1].
\end{equation*}
The $t$-structures obtained in this way for the slice filtration on the motivic stable homotopy category are degenerate. 
On the other hand, 
every accessible $t$-structure $\caC_{\ge 0}$ on $\caC$ (recall that $t$-structures are determined by their homologically non-negative parts)
gives rise to subcategories $\caC_i \subset \caC$ satisfying (A1)--(A3) by setting $\caC_i = \caC_{\ge 0}[i]$.

Suppose that $\caN$ is a stable combinatorial proper model category and $\caH \subset \Ho(\caN)$ is the homologically non-negative part of an 
accessible $t$-structure on $\Ho(\caN)$.
Denote by $c$ and $l$ the corresponding colocalization and localization functors.
Fix a set of objects $\mathscr{K}$ of $\caH$ that generates $\caH$ by homotopy colimits. 
In this situation we may define two new model structures on $\caN$, 
namely the colocalization $C^{\mathscr{K}} \caN$ with respect to $\mathscr{K}$ and the localization $L_{\mathscr{S}}\caN$ with respect to 
$\mathscr{S}=\{0\to K \mid K\in\mathscr{K}\}$ (cf.~Appendix~\ref{ytrgvv}).
The colocalized and localized model structures exist by~\cite{Barwick}, \cite{Hirschhorn}.

\begin{lemma}
On the level of model categories, 
$c$ is modeled by cofibrant replacement in $C^{\mathscr{K}}\caN$ and $l$ is modeled by fibrant replacement in $L_{\mathscr{S}}\caN$.
\end{lemma}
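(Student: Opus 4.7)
The plan is to match, for each case, the cofibrant (or fibrant) replacement with the $t$-structure functor by verifying two universal properties: (a) the replacement lands in the correct subcategory, and (b) the comparison map is an equivalence relative to the appropriate test objects. The $t$-structure functors $c$ and $l$ will then satisfy exactly these properties.

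Recall that $C^{\mathscr{K}}\caN$ is a right Bousfield localization: fibrations and fibrant objects agree with those of $\caN$, while the weak equivalences are the $\mathscr{K}$-colocal equivalences, i.e.\ maps $f$ such that $\map(K,\hat{f})$ is a weak equivalence of simplicial sets for every $K\in\mathscr{K}$ (with $\hat{f}$ a fibrant replacement in $\caN$). The cofibrant objects of $C^{\mathscr{K}}\caN$ are the $\mathscr{K}$-cellular objects, which at the level of $\Ho(\caN)$ coincide with the objects obtainable from $\mathscr{K}$ by homotopy colimits, namely the objects of $\caH$. Consequently a cofibrant replacement $\tilde{X}\to X$ in $C^{\mathscr{K}}\caN$ is characterized, up to weak equivalence in $\caN$, by (i)~$\tilde{X}\in\caH$ and (ii)~$\map(K,\tilde{X})\to\map(K,X)$ is an equivalence for every $K\in\mathscr{K}$.

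Given $X\in\caN$, the $t$-structure produces an exact triangle $c(X)\to X\to l(X)\to c(X)[1]$ in $\Ho(\caN)$ with $c(X)\in\caH$ and $l(X)$ in the orthogonal $\caH^{\perp}$ (i.e.\ $\map(H,l(X))\simeq *$ for every $H\in\caH$). Applying $\map(K,-)$ for $K\in\mathscr{K}\subset\caH$ yields a fiber sequence in which $\map(K,l(X))$ is contractible, so $\map(K,c(X))\to\map(K,X)$ is an equivalence. Combined with $c(X)\in\caH$ this shows that $c(X)\to X$ is indeed a cofibrant replacement in $C^{\mathscr{K}}\caN$.

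A dual analysis handles $L_{\mathscr{S}}\caN$. An object $Y$ is $\mathscr{S}$-local iff $\map(K,Y)\simeq *$ for every $K\in\mathscr{K}$, which by closure of $\caH$ under homotopy colimits is equivalent to $Y\in\caH^{\perp}$. So the fibrant replacement of $X$ in $L_{\mathscr{S}}\caN$ is characterized by (i$'$)~it lies in $\caH^{\perp}$ and (ii$'$)~$\map(\hat{X},Y)\to\map(X,Y)$ is an equivalence for every $Y\in\caH^{\perp}$. The map $X\to l(X)$ satisfies (i$'$) by construction, and applying $\map(-,Y)$ to the triangle $c(X)\to X\to l(X)$ yields a fiber sequence whose fiber $\map(c(X),Y)$ is contractible by orthogonality of $\caH$ and $\caH^{\perp}$. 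Hence $X\to l(X)$ is a fibrant replacement in $L_{\mathscr{S}}\caN$. The main obstacle is the homotopy-invariant identification of cofibrant objects in $C^{\mathscr{K}}\caN$ (respectively, fibrant objects in $L_{\mathscr{S}}\caN$) with $\caH$ (respectively, $\caH^{\perp}$); once this bridge between cellular/local constructions and closure under homotopy colimits is in place, the rest is the routine universal-property matching sketched above.
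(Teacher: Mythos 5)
Your proof is essentially correct and reaches the same conclusion, but it diverges from the paper's argument in the second half. For the colocalization part, both you and the paper use the same universal-property/adjunction observation (the paper simply says $c$ is right adjoint to $\caH\subset\Ho(\caN)$ and leaves the matching with cofibrant replacement in $C^{\mathscr{K}}\caN$ to the reader). For the localization part, however, the paper does not argue with $t$-structure orthogonality directly; instead it defers to a separate, more general statement (Lemma~\ref{jyhtgd}), which applies to an arbitrary set $\mathscr{K}'$ of objects and proves via a five-lemma diagram chase that the cofiber $C$ of $c'X\to X$ satisfies $c'C=0$ and hence is $\mathscr{S}'$-local, provided the $\mathscr{K}'$-colocal objects are closed under extensions. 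Your shortcut — observing that $l(X)\in\caH^{\perp}$ by the $t$-structure axioms and that $\mathscr{S}$-locality agrees with $\caH^{\perp}$ because $\mathscr{K}$ generates $\caH$ under homotopy colimits — is more direct and does not require first establishing the colocalization part, but it is less portable: the paper's Lemma~\ref{jyhtgd} is reused later (e.g.\ in Lemma~\ref{jyhdht}) in a context where the $t$-structure is not assumed up front. Both routes rest on the identification of the $\mathscr{K}$-colocal (equivalently, cofibrant in $C^{\mathscr{K}}\caN$) objects with $\caH$ at the level of $\Ho(\caN)$; you rightly flag this as the main technical obstacle but do not establish it, and in a fuller write-up you should supply the argument that every $\mathscr{K}$-colocal $Z$ is a retract of $c(Z)\in\caH$ and that $\caH$ is closed under retracts. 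One small slip: after applying $\map(-,Y)$ to the triangle $c(X)\to X\to l(X)$ you call $\map(c(X),Y)$ the fibre, whereas it is the base of the resulting fibration sequence; the conclusion is unaffected.
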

\begin{proof}
The first part follows because $c$ is right adjoint to the given inclusion $\caH \subset \Ho(\caN)$. 
The second part follows from the next lemma.
\end{proof}

\begin{lemma} \label{jyhtgd}
Let $c'$ be the colocalization functor associated with a set of objects $\mathscr{K}'$ of $\Ho(\caN)$.
For $X$ in $\caN$, 
complete the colocalization map $c'X \to X$ to an exact triangle
\begin{equation*}
c'X \longrightarrow X  \longrightarrow C  \longrightarrow c'X[1].
\end{equation*}
Then $X \to C$ is an $\mathscr{S}'$-local equivalence where $\mathscr{S}'=\{0 \to K' \mid K' \in \mathscr{K}'\}$. 
Moreover, 
if the subcategory of $\mathscr{K}'$-colocal objects of $\Ho(\caN)$ is closed under extensions, 
then $C$ is $\mathscr{S}'$-local and hence $X \to C$ is an $\mathscr{S}'$-localization.
\end{lemma}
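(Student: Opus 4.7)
The plan is to apply the derived mapping space functor $\Map(-,Z)$ to the triangle $c'X \to X \to C \to c'X[1]$ for well-chosen test objects $Z$, splitting the argument into the two asserted claims.

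For the first claim, fix an $\mathscr{S}'$-local object $Z$. The triangle yields a fiber sequence
\[ \Map(C,Z) \to \Map(X,Z) \to \Map(c'X,Z), \]
so to prove that $X \to C$ is an $\mathscr{S}'$-local equivalence it suffices to show $\Map(c'X,Z) \simeq *$. Consider the full subcategory
\[ \Phi_Z = \{ W \in \Ho(\caN) : \Map(W,Z) \simeq * \}. \]
It contains $\mathscr{K}'$ by $\mathscr{S}'$-locality of $Z$, contains the zero object trivially, and is closed under homotopy colimits because $\Map(-,Z)$ sends them to homotopy limits of contractible spaces. Since $c'X$ is a $\mathscr{K}'$-cellular cofibrant replacement in $C^{\mathscr{K}'}\caN$, produced as a transfinite composition of pushouts along the maps $\emptyset \to K'$ for $K' \in \mathscr{K}'$, it lies in $\Phi_Z$, yielding the required vanishing.

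For the second claim, the additional extension-closure assumption places us in the hypothesis of Proposition \ref{htgrfd}: the subcategory $\caH$ of $\mathscr{K}'$-colocal objects is the homologically non-negative part of an accessible $t$-structure on $\Ho(\caN)$, and the functor $c'$ is canonically identified with the associated truncation $\tau_{\geq 0}$, which is right adjoint to the inclusion $\caH \hookrightarrow \Ho(\caN)$. Consequently, for every $K' \in \mathscr{K}' \subset \caH$ the counit induces an equivalence $\Map(K', c'X) \simeq \Map(K', X)$, and likewise after the shift $[1]$. Applying $\Map(K', -)$ to the triangle then forces $\Map(K', C) \simeq *$, so $C$ is $\mathscr{S}'$-local. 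Combined with the first claim, $X \to C$ is an $\mathscr{S}'$-local equivalence with $\mathscr{S}'$-local target, which is by definition an $\mathscr{S}'$-localization.

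The main technical subtlety is the identification of Hirschhorn's cellular cofibrant replacement $c'X$ in $C^{\mathscr{K}'}\caN$ with the $t$-structure truncation produced by Proposition \ref{htgrfd} once extension-closure holds. Both are characterized by the universal property of being a $\mathscr{K}'$-colocal object equipped with a map to $X$ inducing $\Map(W,-)$-equivalences for every colocal $W$, so they must agree; verifying this matching of two notions of "colocal" (cellular objects on Hirschhorn's side versus the non-negative part of the $t$-structure on Lurie's side) is the one step that genuinely uses the extension hypothesis and is where I expect the argument to require care.
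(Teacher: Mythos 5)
Your argument for the first claim is essentially the paper's: you show $\map(c'X,Z)\simeq *$ for $\mathscr{S}'$-local $Z$ by observing that $c'X$ is built from $\mathscr{K}'$ under homotopy colimits, exactly as the paper does.

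The second claim has a gap, however. The identification of $c'$ with the truncation $\tau_{\geq 0}$ of the accessible $t$-structure from Proposition \ref{htgrfd} is fine once extension-closure is assumed; both are characterized by the same universal property of $\mathscr{K}'$-colocalization. But the step you draw from it is insufficient. The equivalences $\Map(K',c'X)\simeq\Map(K',X)$ for $K'\in\mathscr{K}'$ (and their non-negative shifts) hold unconditionally -- they are literally the statement that $c'X\to X$ is a $\mathscr{K}'$-colocal equivalence, with no extension hypothesis needed. Feeding them into the fiber sequence $\Map(K',c'X)\to\Map(K',X)\to\Map(K',C)$ only gives $\pi_n\Map(K',C)=0$ for $n\geq 1$; it says nothing about $\pi_0\Map(K',C)=\Hom(K',C)$, because the relevant boundary map $\Hom(K',C)\to\Hom(K'[-1],c'X)$ involves a negative shift of $K'$, about which $\mathscr{K}'$-colocality gives no information. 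Indeed, if your argument were complete the extension hypothesis would be unnecessary -- but the conclusion fails without it (this is precisely the distinction between the effective and very effective slice filtrations). What actually carries the weight is the $t$-structure orthogonality, which you set up but did not invoke: once $c'=\tau_{\geq 0}$, the cofiber $C$ is $\tau_{\leq -1}X\in\Ho(\caN)_{\leq -1}$, and $\Hom(W,C)=0$ for every $W\in\caH=\Ho(\caN)_{\geq 0}$ by the defining orthogonality axiom. Applying this to $W=K'[n]$, $n\geq 0$, gives $\Map(K',C)\simeq *$ directly. With this correction your route is valid and genuinely different from the paper's, which never passes to the $t$-structure at all: it builds the auxiliary object $X'$ as the fiber of $c'C\to C\to c'X[1]$, uses extension-closure to see that $X'$ is $\mathscr{K}'$-colocal, runs a five-lemma argument on long exact $\Hom$ sequences to show $X'\to X$ is a $\mathscr{K}'$-colocal equivalence (hence a colocalization), and concludes $c'C=0$.
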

\begin{proof}
First we show that $X \to C$ is an $\mathscr{S}'$-local equivalence. 
Let $Y$ be an $\mathscr{S}'$-local object and consider the fiber sequence of mapping spaces
$$\map(C,Y) \longrightarrow \map(X,Y) \longrightarrow \map(c'X,Y).$$
The simplicial set $\map(c'X,Y)$ is contractible since $Y$ is $\mathscr{S}'$-local and $c'X$ is a homotopy colimit of objects of $\mathscr{K}'$. 
This proves the first part of the lemma.

To prove the second part, 
complete the composite map $f \colon c'C \to C \to c'X[1]$ to an exact triangle
$$
c'X \longrightarrow X' \longrightarrow c'C \longrightarrow c'X[1].
$$
Since $\Ho(\caN)$ is a triangulated category there exists a map $h \colon X' \to X$ such that
$$
\xymatrix{c'X \ar[r] \ar[d] & X' \ar[r] \ar@{.>}[d]^h & c'C \ar[r]^f \ar[d] & c'X[1] \ar[d] \\
c'X \ar[r] & X \ar[r] & C \ar[r] & c'X[1]}
$$
is a map of exact triangles. 
Now both $c'X$ and $c'C$ are $\mathscr{K}'$-colocal.
Thus if the $\mathscr{K}'$-colocal objects are closed under extensions, 
it follows that $X'$ is also $\mathscr{K}'$-colocal.

Next we show $h$ is a $\mathscr{K}'$-colocal equivalence.
Let $K$ be a non-negative suspension of an object of $\mathscr{K}'$.
We need to show that $\Hom(K,h)$ is an isomorphism. 
Consider the diagram of abelian groups:
$$
\xymatrixcolsep{1pc}
\xymatrix{\Hom(K[1],c'C) \ar[r] \ar[d] & \Hom(K,c'X) \ar[r] \ar[d] & \Hom(K,X') \ar[r] \ar[d] &
\Hom(K,c'C) \ar[r] \ar[d] & \Hom(K,c'X[1]) \ar[d] \\
\Hom(K[1],C) \ar[r] & \Hom(K,c'X) \ar[r] & \Hom(K,X) \ar[r] &
\Hom(K,C) \ar[r] & \Hom(K,c'X[1])}
$$
The second and fifth vertical maps are identity maps, 
while the first and fourth maps are isomorphisms because $K$ is $\mathscr{K}'$-colocal. 
By the five lemma, 
$h$ is a $\mathscr{K}'$-colocal equivalence.
Hence $h$ is a $\mathscr{K}'$-colocalization; 
that is, 
$c'X \to X'$ is an isomorphism and thus $c'C=0$. 
It follows that $\map(K,C)=\map(K,c'C)$ is contractible, and $C$ is therefore $\mathscr{S}'$-local.
\end{proof}

Thus our functors $c_i$ and $l_i$ can be modeled on the model category level by cofibrant and fibrant replacement functors.
For $l_i$ we can refine this by localizing at the set of maps $\mathscr{S}=\{0 \to K\mid K \in \mathscr{K}\}$, 
where $\mathscr{K}$ generates the homologically non-negative part of a $t$-structure by homotopy colimits and extensions.
We note the following useful result.

\begin{lemma} \label{jyhdht}
Suppose $\caH \subset \Ho(\caN)$ is a full subcategory generated by homotopy colimits and extensions by a set $\widetilde{\mathscr{K}}$. 
Then the localization functor $l$ of the corresponding $t$\nobreakdash-structure is the $\widetilde{\mathscr{S}}$-localization functor for the set
$\widetilde{\mathscr{S}}=\{0 \to K \mid K \in \widetilde{\mathscr{K}}\}$.
\end{lemma}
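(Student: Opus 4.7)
The plan is to show that for every $X$, the $t$-structure localization map $X \to lX$ satisfies the universal property of $\widetilde{\mathscr{S}}$-localization, which reduces to two verifications: (a) $lX$ is $\widetilde{\mathscr{S}}$-local, and (b) $X \to lX$ is an $\widetilde{\mathscr{S}}$-local equivalence.

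The crux is the following closure observation. For any object $Y$ of $\Ho(\caN)$, consider the full subcategory
$$
\caE_Y := \{ K \in \Ho(\caN) \mid \map(K,Y) \simeq * \}.
$$
This subcategory is closed under homotopy colimits, since $\map(-,Y)$ converts homotopy colimits in $\caN$ to homotopy limits in simplicial sets and a homotopy limit of contractible simplicial sets is contractible. It is also closed under extensions: applying $\map(-,Y)$ to an exact triangle $K_1 \to K_2 \to K_3 \to K_1[1]$ yields a fiber sequence
$$
\map(K_1[1],Y) \to \map(K_3,Y) \to \map(K_2,Y) \to \map(K_1,Y),
$$
whose outer two terms are contractible whenever $K_1,K_3 \in \caE_Y$, using that $\map(K_1[1],Y) \simeq \Omega\,\map(K_1,Y)$. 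Hence, if $Y$ is $\widetilde{\mathscr{S}}$-local then by definition $\widetilde{\mathscr{K}} \subset \caE_Y$, and the generation hypothesis on $\caH$ forces $\caH \subset \caE_Y$. In particular every $\widetilde{\mathscr{S}}$-local object is right orthogonal to all of $\caH$.

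Claim (a) is then immediate from the $t$-structure axioms: $lX$ is right orthogonal to $\caH$, hence in particular to the generators $\widetilde{\mathscr{K}} \subset \caH$, so $lX$ is $\widetilde{\mathscr{S}}$-local. For claim (b), let $Y$ be any $\widetilde{\mathscr{S}}$-local object; then $cX \in \caH \subset \caE_Y$, and also $cX[1] \in \caE_Y$ (either because $\caH$ is closed under $[1]$ or directly from $\map(cX[1],Y) \simeq \Omega\,\map(cX,Y)$). Applying $\map(-,Y)$ to the exact triangle $cX \to X \to lX \to cX[1]$ yields an equivalence $\map(lX,Y) \simeq \map(X,Y)$, so $X \to lX$ is an $\widetilde{\mathscr{S}}$-local equivalence. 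Combining (a) and (b), $X \to lX$ is the $\widetilde{\mathscr{S}}$-localization and therefore $l = L_{\widetilde{\mathscr{S}}}$. The only substantive step is the closure of $\caE_Y$ under extensions; this is precisely where the word ``extensions'' in the hypothesis on $\caH$ earns its keep, since without it the orthogonality $\caH \subset \caE_Y$ would not propagate from the generators $\widetilde{\mathscr{K}}$.
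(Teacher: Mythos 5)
Your proof is correct, and it takes a somewhat more direct route than the paper's. The paper first invokes Lemma~\ref{jyhtgd} to identify $l$ with the $\mathscr{S}$-localization for an auxiliary set $\mathscr{K}$ that generates $\caH$ by homotopy colimits alone (such a set exists by Proposition~\ref{htgrfd}), and then argues that the $\mathscr{S}$-local and $\widetilde{\mathscr{S}}$-local objects coincide. You instead verify the universal property of the $\widetilde{\mathscr{S}}$-localization directly, without routing through $\mathscr{K}$ or the earlier lemma: you show $lX$ is $\widetilde{\mathscr{S}}$-local and that $X\to lX$ is an $\widetilde{\mathscr{S}}$-local equivalence. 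In both arguments the substantive technical point is the same one you isolate in $\caE_Y$: the class of objects right orthogonal (in the mapping-space sense) to a fixed $Y$ is closed under homotopy colimits and extensions, and therefore contains all of $\caH$ once it contains the generating set. Your packaging is self-contained and arguably cleaner, at the cost of not reusing the already-proved Lemma~\ref{jyhtgd}; the paper's is shorter given that lemma is available. One small point worth making explicit in your step (a): contractibility of $\map(K,lX)$ for $K\in\caH$ follows because $\pi_n\map(K,lX)\cong\Hom(K[n],lX)$ for $n\ge 0$, $\caH$ is closed under non-negative suspensions, and $lX$ lies in $\caH^\perp$ by the $t$-structure axioms.
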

\begin{proof}
Let $\mathscr{K}$ be a set that generates $\caH$ by homotopy colimits. 
By Lemma~\ref{jyhtgd},
$l$ is the $\mathscr{S}$\nobreakdash-localization functor for $\mathscr{S}=\{0 \to K \mid K \in \mathscr{K}\}$.
It suffices to show that the $\widetilde{\mathscr{S}}$\nobreakdash-local and the $\mathscr{S}$-local objects coincide. 
Of course any $\mathscr{S}$-local object is $\widetilde{\mathscr{S}}$-local.
The converse is proved by observing that if $\map(K_1,X)$ and $\map(K_2,X)$ are contractible, 
and $K$ is an extension of $K_1$ and $K_2$, 
then $\map(K,X)$ is contractible.
This follows by arguing with long exact sequences for the given extension.
\end{proof}

\begin{remark}
Lemmas \ref{jyhtgd} and \ref{jyhdht} hold true without the properness assumption on~$\caN$. 
The colocalization and localization functors exist because they exist on the level of $\infty$-categories, 
cf.~the proof of Proposition \ref{htgrfd} and \cite[Proposition 16.1]{lurie.DAGI}.
\end{remark}

\subsection{The core of a pair of subcategories}
\label{jytgef} 

Let $\caN$ be a stable combinatorial model category with homotopy category $\caD=\Ho(\caN)$.
Fix full subcategories $\caD_1\subset\caD_0$ of $\caD$ and assume that $\caD_i$ is the homologically non-negative part of an accessible $t$-structure 
on $\caD$. 
By a {\em pair} we mean $(\caD_0,\caD_1)$ with these properties.
For $\caD_i$ we denote the corresponding localization functor by $l_i$ and the colocalization functor by $c_i$ (for $i=0,1$). 
Hence for any object $X$ in $\caD$ there are exact triangles
$$
c_i(X) \longrightarrow X \longrightarrow l_i(X) \longrightarrow c_i(X)[1].
$$
Let $\caE_i$ be the essential image of $l_i$.
With these definitions the factorization $\caD \to \caE_i$ of $l_i$ is left adjoint to the inclusion $\caE_i \subset \caD$.
Define \emph{the core of the pair} $(\caD_0,\caD_1)$ as the intersection $\caQ=\caE_1\cap\caD_0$.
Informally we may think of the core as the quotient of $\caD_0$ by $\caD_1$. 
\begin{lemma} \label{jyhgd}
The restriction of $l_1$ to ${\caD_0}$ factors through $\caD_0$. 
Dually, 
the restriction of $c_0$ to ${\caE_1}$ factors through $\caE_1$.
\end{lemma}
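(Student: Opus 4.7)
The plan is to establish both statements via the defining exact triangles of $l_1$ and $c_0$, appealing only to the standard closure properties of the two halves of an accessible $t$-structure: the homologically non-negative part is closed under extensions and under the positive shift $[1]$, while its right orthogonal is closed under extensions and under the negative shift $[-1]$.

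For the first assertion, given $X\in\caD_0$, I would apply the exact triangle
$$c_1(X) \longrightarrow X \longrightarrow l_1(X) \longrightarrow c_1(X)[1]$$
attached to the $t$-structure on $\caD$ whose non-negative part is $\caD_1$. By construction $c_1(X)\in\caD_1\subset\caD_0$, and closure of $\caD_0$ under positive shifts gives $c_1(X)[1]\in\caD_0$ as well. Rotating the triangle, $l_1(X)$ appears as the middle term of an exact triangle whose two outer terms $X$ and $c_1(X)[1]$ both lie in $\caD_0$, and closure of $\caD_0$ under extensions forces $l_1(X)\in\caD_0$.

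For the dual assertion, I would first record the inclusion $\caE_0\subset\caE_1$. The essential image of $l_i$ admits the intrinsic description
$$\caE_i=\{Y\in\caD\mid\Hom(K,Y)=0 \text{ for all } K\in\caD_i\},$$
since $\caE_i$ is precisely the right orthogonal of $\caD_i$; this follows by applying $\Hom(c_i(Y),-)$ to the colocalization-localization triangle of $Y$ and invoking the defining vanishing of the $t$-structure. The inclusion $\caD_1\subset\caD_0$ thus reverses under right orthogonality to yield $\caE_0\subset\caE_1$. Now for $X\in\caE_1$, apply the exact triangle
$$c_0(X) \longrightarrow X \longrightarrow l_0(X) \longrightarrow c_0(X)[1]$$
attached to the $t$-structure whose non-negative part is $\caD_0$. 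Then $l_0(X)\in\caE_0\subset\caE_1$. Rotating, $c_0(X)$ fits into an exact triangle with outer terms $l_0(X)[-1]$ and $X$, both in $\caE_1$ by closure under negative shifts and by hypothesis, respectively. Closure of $\caE_1$ under extensions yields $c_0(X)\in\caE_1$.

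The argument is entirely formal and I do not expect any genuine obstacle; the only step requiring any care is the orthogonality description of $\caE_i$, which is standard for accessible $t$-structures but was not stated explicitly earlier in the text. Once that is in hand, the two assertions are obtained by the same bookkeeping applied to dual halves of the respective $t$-structures.
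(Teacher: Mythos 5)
Your proof is correct and takes essentially the same route as the paper: both start from the defining triangle $c_1(X)\to X\to l_1(X)$, observe $c_1(X)\in\caD_1\subset\caD_0$, and conclude by a closure property of $\caD_0$. The only cosmetic difference is that the paper phrases the closure step as ``$\caD_0$ is closed under homotopy colimits'' (the cofiber is a pushout), whereas you phrase it as closure under extensions together with the shift $[1]$; these come to the same thing here, and you simply make explicit the dual argument for $c_0$ that the paper leaves to the reader.
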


\begin{proof}
For $X$ in $\caD_0$ the localization $l_1(X)$ is the cofiber of $c_1(X)\to X$. 
Since $\caD_1 \subset \caD_0$ it follows that $c_1(X)\in\caD_0$. 
This allows us to conclude since $\caD_0$ is closed under homotopy colimits.
Alternatively, note that $\Hom(l_1(X),Y)=0$ for every $Y\in\caE_0$ since $\caE_0\subset\caE_1$ implies $\Hom(l_1X,Y)\to\Hom(X,Y)=0$ is an isomorphism. 
The second statement follows by a dual argument.
\end{proof}

Lemma \ref{jyhgd} shows there exists an induced functor $L \colon \caD_0 \to \caQ$ which is left adjoint to the inclusion $\caQ \subset \caD_0$.
\begin{corollary}
The functor $L$ identifies $\caQ$ with the localization of $\caD_0$ with respect to $l_1$-local equivalences.
\end{corollary}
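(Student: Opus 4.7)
The plan is to recognize $L$ as a reflective localization of $\caD_0$ and then invoke the standard universal property that a reflective localization is an ordinary (Gabriel--Zisman) localization at the class of morphisms it inverts.

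First, I would observe that by Lemma \ref{jyhgd} the adjunction $L \dashv \iota$ with $\iota\colon \caQ \hookrightarrow \caD_0$ the inclusion is already in hand, and $\iota$ is fully faithful since $\caQ$ is a full subcategory of $\caD_0$. Next I would verify that the counit $L\iota \Rightarrow \mathrm{id}_{\caQ}$ is a natural isomorphism: for $Y\in\caQ\subset\caE_1$ the object $Y$ is $l_1$-local, so $LY=l_1(Y)\cong Y$. Thus $L$ exhibits $\caQ$ as a reflective subcategory of $\caD_0$, and the unit of the adjunction at $X\in\caD_0$ is (up to isomorphism) the $t$-structure map $\eta_X\colon X\to l_1(X)$.

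Second, I would identify the class of morphisms in $\caD_0$ inverted by $L$ with the $l_1$-local equivalences. This is essentially tautological: $L(f)=l_1(f)$, so $L(f)$ is an isomorphism in $\caQ$ if and only if $l_1(f)$ is an isomorphism in $\caD$, which is precisely the definition of an $l_1$-local equivalence. In particular every unit map $\eta_X\colon X\to l_1(X)$ is an $l_1$-local equivalence, because applying $l_1$ to $\eta_X$ yields the canonical isomorphism $l_1(X)\cong l_1 l_1(X)$.

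Third, I would check the universal property. Given any functor $F\colon \caD_0\to\caV$ sending $l_1$-local equivalences to isomorphisms, the natural transformation $F\eta$ has components that are isomorphisms by the previous step, so $F$ is canonically isomorphic to $(F\iota)\circ L$; uniqueness of the factorization follows formally from $L\iota\cong\mathrm{id}_{\caQ}$. This gives the stated identification of $\caQ$ with $\caD_0[W^{-1}]$, where $W$ denotes the class of $l_1$-local equivalences.

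The only step that requires any care is verifying that $\eta_X$ is an $l_1$-local equivalence, so that the unit is universal among $W$-inverting functors; everything else is a direct application of the reflective-localization formalism, and the rest of the structure (the adjunction, the factorization, the essential image) is already supplied by Lemma \ref{jyhgd} and the definition of $\caQ$.
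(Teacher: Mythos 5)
Your proof is correct and is the natural argument; the paper in fact gives no explicit proof, treating the corollary as an immediate consequence of the standard fact that a left adjoint to a fully faithful inclusion exhibits the subcategory as the localization of the ambient category at the class of morphisms it inverts. Your verification that the counit is an isomorphism (since $\caQ\subset\caE_1$ consists of $l_1$-local objects) and that $L$ inverts exactly the $l_1$-local equivalences supplies precisely what the paper leaves implicit.
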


\begin{lemma}
If a set $\mathscr{K}$ generates $\caD_1$ by homotopy colimits and extensions, 
then $\caQ$ is the localization of $\caD_0$ with respect to the set of maps $\mathscr{S}=\{0\to K\mid K\in\mathscr{K}\}$.
\end{lemma}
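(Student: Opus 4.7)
The plan is to deduce this directly by combining the preceding corollary with Lemma~\ref{jyhdht}. The corollary identifies $\caQ$, via the functor $L$, with the Bousfield localization of $\caD_0$ at the class of $l_1$-local equivalences, meaning maps $f$ in $\caD_0$ such that $l_1(f)$ is an isomorphism. So it suffices to show that inside $\caD_0$ the $l_1$-local equivalences coincide with the $\mathscr{S}$-local equivalences, and that the $l_1$-local objects of $\caD_0$ coincide with the $\mathscr{S}$-local objects of $\caD_0$.

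First I would invoke Lemma~\ref{jyhdht} with $\widetilde{\mathscr{K}}=\mathscr{K}$: since $\mathscr{K}$ generates $\caD_1$ by homotopy colimits and extensions, the $t$-structure localization $l_1$ coincides with the $\mathscr{S}$-localization functor on all of $\caD$. Consequently, an object $Y$ of $\caD$ lies in $\caE_1$ if and only if it is $\mathscr{S}$-local, and a map $f$ in $\caD$ satisfies ``$l_1(f)$ is an isomorphism'' if and only if $f$ is an $\mathscr{S}$-local equivalence (both being characterized by mapping trivially into, respectively isomorphically between, $\mathscr{S}$-local objects).

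Restricting the previous statements to $\caD_0$, I would observe that the $l_1$-local equivalences in $\caD_0$ are exactly the $\mathscr{S}$-local equivalences in $\caD_0$, and that the $\mathscr{S}$-local objects of $\caD_0$ are precisely $\caE_1\cap\caD_0=\caQ$. Plugging this into the preceding corollary yields the desired identification of $\caQ$ with the Bousfield localization of $\caD_0$ at $\mathscr{S}$. The argument amounts to stitching together two adjunctions, so I do not foresee a serious obstacle; the only point requiring mild care is distinguishing what ``$\mathscr{S}$-localization'' means in the subcategory $\caD_0$ versus in the ambient $\caD$, which is handled by Lemma~\ref{jyhgd} ensuring that the reflection $l_1$ restricts to a functor $\caD_0\to\caQ$.
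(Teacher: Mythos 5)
Your argument is correct and essentially mirrors the paper's own proof: both identify the $\mathscr{S}$-local objects with the $l_1$-local objects via Lemma~\ref{jyhdht} and then restrict to $\caD_0$, using the fact (from Lemma~\ref{jyhgd} and the preceding corollary) that $l_1$ reflects $\caD_0$ onto $\caQ$. The only detail the paper adds that you omit is the remark that $\caD_0$ carries a left proper combinatorial model (so that the Bousfield localization of $\caD_0$ along $\mathscr{S}$ is actually defined), but your identification of $\caQ$ via the corollary carries the same content implicitly.
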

\begin{proof}
Lemma \ref{jyhtgd} shows the $\mathscr{S}$-local objects are exactly the $l_1$-local objects. 
But the full subcategory of $\caD_0$ consisting of $\mathscr{S}$-local objects is exactly the localization of $\caD_0$ along $\mathscr{S}$. 
(Note that $\caD_0$ has a left proper combinatorial model since it is the homotopy category of a presentable $\infty$-category,
see \cite[Proposition 16.1]{lurie.DAGI} and \cite[Proposition A.3.7.6]{Lurie}.)
\end{proof}

We define the \emph{slice functor} of $\caD$ relative to the pair $(\caD_0, \caD_1)$ by setting 
$$
s(X)=l_1(c_0(X)).
$$ 
Thus, 
for every $X$ in $\caD$, 
there is an exact triangle
\begin{equation}
\label{equation:slice}
c_1(X) \longrightarrow c_0(X) \longrightarrow s(X) \longrightarrow c_1(X)[1].
\end{equation}
The exact triangle (\ref{equation:slice}) determines the slice $s(X)$ up to unique isomorphism:
Uniqueness follows because $\caD_1$ is closed under suspension and the Hom group $\Hom_{\caD}(\caD_1,s(X))$ is trivial;
the vanishing is a consequence of the universal property of the $l_1$-localization functor.
\begin{lemma}
For every object $X$ of $\caD$, 
$s(X)\in\caQ$ and $c_0(l_1(X))\in\caQ$.
\end{lemma}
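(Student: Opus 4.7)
The plan is to deduce both statements as immediate consequences of Lemma \ref{jyhgd}, together with the tautological fact that colocalization $c_i$ lands in $\caD_i$ and localization $l_i$ lands in $\caE_i$.

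For the first statement, $s(X) = l_1(c_0(X))$. By definition $l_1$ takes values in its essential image $\caE_1$, so $s(X) \in \caE_1$. For membership in $\caD_0$, observe that $c_0(X) \in \caD_0$ since $c_0$ is the colocalization functor associated to the pair whose non-negative part is $\caD_0$. Applying the first half of Lemma \ref{jyhgd}, the restriction of $l_1$ to $\caD_0$ factors through $\caD_0$, so $l_1(c_0(X)) \in \caD_0$. Combining the two memberships gives $s(X) \in \caE_1 \cap \caD_0 = \caQ$.

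For the second statement, $c_0(l_1(X))$ lies in $\caD_0$ by the same reason as above (colocalization by $\caD_0$ takes values in $\caD_0$). Since $l_1(X) \in \caE_1$, the second half of Lemma \ref{jyhgd} tells us that the restriction of $c_0$ to $\caE_1$ factors through $\caE_1$, hence $c_0(l_1(X)) \in \caE_1$. Therefore $c_0(l_1(X)) \in \caE_1 \cap \caD_0 = \caQ$.

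There is essentially no obstacle here; the content has already been packaged in Lemma \ref{jyhgd}, and the present lemma is simply the observation that $s$ and the ``dual'' construction $c_0 \circ l_1$ both produce objects of the core. The only thing one has to be a little careful about is not conflating $\caD_0$ with $\caE_0$: we use that $c_0$ lands in $\caD_0$ (by definition of the $t$-structure) and that $l_1$ preserves $\caD_0$ (by Lemma \ref{jyhgd}), rather than anything about $\caE_0$.
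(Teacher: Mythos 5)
Your proof is correct and takes exactly the same route as the paper, which simply states ``This follows from Lemma~\ref{jyhgd}''; you have merely unpacked that one-line citation into the explicit four memberships. The remark distinguishing $\caD_0$ from $\caE_0$ is a reasonable caution, but the argument is otherwise the intended one.
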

\begin{proof}
This follows from Lemma~\ref{jyhgd}.
\end{proof}

\begin{lemma} \label{hbgfs}
The functors $s$ and $c_0 \circ l_1$ are canonically isomorphic.
\end{lemma}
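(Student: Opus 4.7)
The plan is to apply the triangulated functor $c_{0}$ to the $l_{1}$-localization triangle and invoke the uniqueness principle for the slice established just before the statement. The key observation that makes everything work is that $\caD_{1}\subset\caD_{0}$ forces $c_{0}c_{1}\cong c_{1}$.

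First, I would record that identification. For any $X\in\caD$, the object $c_{1}X$ lies in $\caD_{1}\subset\caD_{0}$, so the colocalization map $c_{1}X\to X$ factors uniquely through $c_{0}X\to X$; since any map from an object of $\caD_{0}$ to $c_{1}X$ is already a map in $\caD_{0}$, the universal property of $c_{0}$ gives a canonical isomorphism $c_{0}c_{1}X\xrightarrow{\cong}c_{1}X$.

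Next, apply the exact functor $c_{0}$ to the $\caD_{1}$-localization triangle
$$
c_{1}X\longrightarrow X\longrightarrow l_{1}X\longrightarrow c_{1}X[1].
$$
Using $c_{0}c_{1}X\cong c_{1}X$, the resulting exact triangle reads
$$
c_{1}X\longrightarrow c_{0}X\longrightarrow c_{0}l_{1}X\longrightarrow c_{1}X[1],
$$
whose first two terms and first arrow agree canonically with those of the slice triangle (\ref{equation:slice}): indeed, the first arrow here is the composite of the adjunction isomorphism $c_{0}c_{1}X\cong c_{1}X$ with the image under $c_{0}$ of $c_{1}X\to X$, which by the unique factorization above is precisely the map $c_{1}X\to c_{0}X$ appearing in (\ref{equation:slice}).

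Finally, I would invoke the uniqueness statement proved immediately after the definition of $s(X)$: a completion of $c_{1}X\to c_{0}X$ to an exact triangle whose third term $Z$ satisfies $\Hom_{\caD}(\caD_{1},Z)=0$ is determined up to unique isomorphism, and this isomorphism is natural in $X$. The required vanishing for $Z=c_{0}l_{1}X$ is straightforward: for any $Y\in\caD_{1}\subset\caD_{0}$, adjunction between $c_{0}$ and the inclusion $\caD_{0}\subset\caD$ gives $\Hom_{\caD}(Y,c_{0}l_{1}X)\cong\Hom_{\caD}(Y,l_{1}X)$, which is zero since $l_{1}X$ is $l_{1}$-local. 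This yields the canonical natural isomorphism $s(X)=l_{1}c_{0}X\cong c_{0}l_{1}X$. The main conceptual point, and the only slightly delicate step, is verifying that the identification of the first two terms in the two triangles is compatible with their connecting maps, so that uniqueness produces a \emph{canonical} (not merely abstract) isomorphism; this is handled by the factorization property of $c_{1}X\to X$ through $c_{0}X$ noted in the first step.
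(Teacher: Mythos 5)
Your plan is appealing and the surrounding observations are correct (the identification $c_0c_1X\cong c_1X$, the vanishing $\Hom_{\caD}(\caD_1, c_0l_1X)=0$, and the appeal to uniqueness), but the central step is not justified: you call $c_0$ ``the exact functor'' and apply it to the $\caD_1$-localization triangle, asserting that the result
$$
c_1X \longrightarrow c_0X \longrightarrow c_0l_1X \longrightarrow c_1X[1]
$$
is again distinguished. The colocalization functor $c_0=\tau_{\ge 0}$ attached to a $t$-structure is \emph{not} a triangulated functor. It does not commute with shifts (since $\caD_0$ need not be closed under desuspensions --- the paper stresses exactly this), and it does not in general carry distinguished triangles to distinguished triangles. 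So the claim that the displayed sequence is an exact triangle is precisely what needs proving, and it is essentially the content of the lemma; as written, the argument is circular at that point.

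The gap can be repaired, but it requires an actual argument rather than an appeal to exactness of $c_0$. One route is the octahedral axiom applied to the composite $c_1X\to c_0X\to X$: this produces a distinguished triangle $c_1X\to c_0X\to Z\to c_1X[1]$ (so $Z\cong s(X)$ since $c_1c_0X\cong c_1X$) together with a distinguished triangle $Z\to l_1X\to l_0X\to Z[1]$; combining the latter with the dual identification $l_0l_1X\cong l_0X$ and the triangle $c_0l_1X\to l_1X\to l_0l_1X$ then gives $Z\cong c_0l_1X$. That is genuinely different in spirit from the paper's proof, which never needs to know the triangle you wrote is distinguished: the paper instead builds the natural comparison map $\varphi_X\colon l_1c_0X\to c_0l_1X$ directly from the universal properties of $c_0$ and $l_1$ (via the commutative square (\ref{equation:BBDluriediagram})) and then verifies it is an isomorphism by testing against objects $Y\in\caD_0$ and running the five lemma on the two long exact $\Hom$-sequences. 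Either route works, but you must supply one of them; the assertion that $c_0$ is exact is the missing, and in fact false, ingredient in your write-up.
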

\begin{proof} 
The proof expands on techniques originating in \cite{BBD}, \cite[Proposition 6.10]{lurie.DAGI}.
We begin by contemplating the diagram:
\begin{equation}
\label{equation:BBDluriediagram}
\xymatrix{
c_0(X) \ar[r] \ar[d] & X \ar[r] & l_1(X)  \\
l_1(c_0(X)) \ar@{.>}[rr]^{\varphi_X} & & c_0(l_1(X)) \ar[u]  }
\end{equation}
By reference to Lemma \ref{jyhgd}, 
combined with the universal properties of $c_0$ and $l_1$,
there exists a unique dotted arrow $\varphi_X$ rendering (\ref{equation:BBDluriediagram}) commutative.
Clearly, 
for varying $X$, 
the $\varphi_X$'s define a natural transformation.
In the rest of the proof we argue that $\varphi_X$ is an isomorphism. 
It suffices to show there is a naturally induced isomorphism
$$
\Hom(Y,l_1(c_0(X))) 
\longrightarrow 
\Hom(Y,c_0(l_1(X))) 
\overset{\cong}{\longrightarrow} 
\Hom(Y,l_1(X))
$$
for every object $Y$ in $\caD_0$. 
In effect, 
consider the induced map of long exact sequences:
$$
\xymatrix{\Hom(Y,c_1(c_0(X))) \ar[r] \ar[d] & \Hom(Y,c_1(X)) \ar[d] \\
\Hom(Y,c_0(X)) \ar[r] \ar[d] & \Hom(Y,X) \ar[d] \\
\Hom(Y,l_1(c_0(X))) \ar[r] \ar[d] & \Hom(Y,l_1(X)) \ar[d] \\
\Hom(Y,c_1(c_0(X))[1]) \ar[r] \ar[d] & \Hom (Y,c_1(X)[1]) \ar[d] \\
\Hom (Y,c_0(X)[1]) \ar[r] & \Hom(Y,X[1]) }
$$
Since $c_1(c_0(X))\cong c_1(X)$ the first and fourth horizontal maps are isomorphisms. 
Since $Y\in\caD_0$ the second horizontal map is an isomorphism and the fifth horizontal map is injective. 
Thus applying the five lemma finishes the proof.
\end{proof}
\begin{remark}
As a motivation for Lemma \ref{hbgfs} the referee pointed out the analogy with chain complexes equipped with the upper and lower truncation functors.
\end{remark}

In what follow we also assume that $\caN$ is a symmetric monoidal model category with a cofibrant unit, 
and the following compatibility axioms relating the derived tensor product on $\caD$ with the full subcategories $\caD_0$ and $\caD_1$:
\begin{enumerate}
\item[(B1)] $\caD_0$ contains the unit and is closed under the tensor product.
\item[(B2)] $\caD_1$ is a tensor ideal in $\caD_0$, i.e., $X\otimes Y\in D_1$ if $X\in \caD_0$ and $Y\in \caD_1$.
\end{enumerate}

\begin{lemma} \label{hgfss}
Let $\caT$ be a triangulated category with homologically non-negative part $\caT_{\ge 0}$ and corresponding localization functor $l$ 
projecting $\caT$ onto $\caT_{\le -1}$.
Suppose
$$
X \longrightarrow Y \longrightarrow Z \longrightarrow X[1]
$$
is an exact triangle and $X\in \caT_{\ge 0}$. 
Then $Y \to Z$ is an $l$-local equivalence.
\end{lemma}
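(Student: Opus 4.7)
The plan is to reduce the statement to a vanishing check by testing $l$-local equivalences against $l$-local objects. Recall that $l$ is the Bousfield reflection onto $\caT_{\le -1}$, so a map in $\caT$ is an $l$-local equivalence if and only if it becomes an isomorphism after applying $\Hom_{\caT}(-,W)$ for every $W$ in the essential image of $l$. By definition of the $t$-structure, this essential image is precisely $\caT_{\le -1}$, i.e., the objects $W$ satisfying $\Hom_{\caT}(V,W)=0$ for every $V\in\caT_{\ge 0}$.

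Next, I would fix such a $W\in\caT_{\le -1}$ and apply the cohomological functor $\Hom_{\caT}(-,W)$ to the given exact triangle $X\to Y\to Z\to X[1]$. This yields the four-term exact sequence
$$
\Hom_{\caT}(X[1],W)\longrightarrow\Hom_{\caT}(Z,W)\longrightarrow\Hom_{\caT}(Y,W)\longrightarrow\Hom_{\caT}(X,W).
$$
Since $\caT_{\ge 0}$ is closed under the suspension functor (in the homological convention used throughout the paper, $\caT_{\ge 0}[1]\subset\caT_{\ge 0}$), both $X$ and $X[1]$ lie in $\caT_{\ge 0}$. The defining orthogonality of the $t$-structure then forces the two outer terms to vanish, so the middle map $\Hom_{\caT}(Z,W)\to\Hom_{\caT}(Y,W)$ is an isomorphism.

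Finally, since this holds for every $l$-local $W$, the map $Y\to Z$ is an $l$-local equivalence, which is exactly the claim. The only point requiring care is the identification of $l$-local objects with $\caT_{\le -1}$ and the closure of $\caT_{\ge 0}$ under $[1]$; both are immediate from the conventions fixed in Section~\ref{hygrf} and Section~\ref{jytgef}, so no substantial obstacle arises. Essentially the argument is a one-line application of the orthogonality defining the $t$-structure, packaged as a long exact sequence.
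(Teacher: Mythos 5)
Your proof is correct and follows essentially the same route as the paper's (one-line) proof: apply $\Hom_{\caT}(-,W)$ to the triangle for an $l$-local $W$, and use the orthogonality $\Hom(\caT_{\ge 0},\caT_{\le -1})=0$ together with closure of $\caT_{\ge 0}$ under $[1]$ to kill the outer terms. You have simply expanded the details the paper leaves implicit.
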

\begin{proof}
Taking Homs out of the exact triangle into an $l$-local object verifies the claim.
\end{proof}

\begin{lemma} \label{bfgdhs}
Suppose $X\to Y$ is an $l_1$-local equivalence in $\caD$ and $Z$ is an object of $\caD_0$.
Then $X\otimes Z\to Y\otimes Z$ is an $l_1$-local equivalence.
\end{lemma}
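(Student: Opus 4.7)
My plan is to translate the condition of being an $l_1$-local equivalence into a condition on the cofiber via the $t$-structure, and then simply invoke axiom (B2). First, complete the given map to an exact triangle
$$X \longrightarrow Y \longrightarrow C \longrightarrow X[1]$$
in $\caD$. Since $l_1$ is a triangulated (localization) functor and $l_1(X) \to l_1(Y)$ is an isomorphism by hypothesis, the induced triangle shows $l_1(C) = 0$. Combined with the defining exact triangle $c_1(C) \to C \to l_1(C) \to c_1(C)[1]$, this gives $C \cong c_1(C)$, and in particular $C \in \caD_1$.

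Next, since $\caN$ is a stable symmetric monoidal model category with cofibrant unit, the derived tensor product $(-)\otimes Z$ is an exact (triangulated) endofunctor of $\caD$. Tensoring the triangle above with $Z$ produces an exact triangle
$$X \otimes Z \longrightarrow Y \otimes Z \longrightarrow C \otimes Z \longrightarrow (X \otimes Z)[1].$$
Axiom (B2) applies: from $Z \in \caD_0$ and $C \in \caD_1$ we conclude $C \otimes Z \in \caD_1$.

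Finally, rotate this triangle so that $C \otimes Z$ becomes the leftmost term, and apply Lemma \ref{hgfss} with $\caT_{\ge 0} = \caD_1$ to conclude that $(X \otimes Z)[1] \to (Y \otimes Z)[1]$, and hence $X \otimes Z \to Y \otimes Z$, is an $l_1$-local equivalence. Alternatively, one may simply apply $l_1$ to the triangle and use that $l_1(C \otimes Z) = 0$ to see directly that $l_1(X \otimes Z) \to l_1(Y \otimes Z)$ is an isomorphism. There is no serious obstacle here: the entire content of the argument is the dictionary ``$l_1$-local equivalence'' $\Leftrightarrow$ ``cofiber lies in $\caD_1$'', combined with the tensor-ideal property (B2) that propagates this condition through tensoring with objects of $\caD_0$.
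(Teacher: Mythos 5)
Your proof has a genuine gap at the final step, and the source of the trouble is the claimed ``dictionary'' between $l_1$-local equivalences and cofibers lying in $\caD_1$. Only one direction of that dictionary holds. You correctly observe (although your justification via ``$l_1$ is a triangulated functor'' is not available here, since the axioms explicitly allow $\caD_1$ to fail to be closed under desuspension, so $l_1$ need not be an exact functor) that if $X\to Y$ is an $l_1$-local equivalence, then the cofiber $C$ lies in $\caD_1$; this can be shown by a five-lemma argument with $\Hom(-,W)$ for $W$ local. And then $C\otimes Z\in\caD_1$ by (B2). But the converse implication fails: a triangle $A\to B\to C'$ with $C'\in\caD_1$ does \emph{not} imply that $A\to B$ is an $l_1$-local equivalence. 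Concretely, take $\caD=D(\Z)$ with the standard $t$-structure and the triangle $\Z[-1]\to 0\to \Z$; here $\Z\in\caD_{\ge0}$, yet $l_1(\Z[-1])=\Z[-1]\ne 0=l_1(0)$. Lemma~\ref{hgfss} requires the \emph{first} term of the triangle (the fiber of the map in question) to lie in $\caT_{\ge0}$, not the third term (the cofiber), and the two conditions are genuinely different when $\caD_1$ is not closed under $[-1]$.

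Your rotation does not repair this. Shifting the triangle to $C\otimes Z\to (X\otimes Z)[1]\to (Y\otimes Z)[1]$ and applying Lemma~\ref{hgfss} correctly shows that $(X\otimes Z)[1]\to (Y\otimes Z)[1]$ is an $l_1$-local equivalence, but ``and hence $X\otimes Z\to Y\otimes Z$ is an $l_1$-local equivalence'' is not a valid inference. In terms of truncations, $l_1(h[1])=\tau_{\le -1}(h[1])=\bigl(\tau_{\le -2}h\bigr)[1]$, so what you have established is that $\tau_{\le -2}(X\otimes Z)\to\tau_{\le -2}(Y\otimes Z)$ is an isomorphism --- one degree weaker than what is needed ($\tau_{\le -1}$). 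Desuspension preserves $l_1$-local equivalences only if $\caD_1$ is triangulated, which is precisely what these axioms do not assume. The paper's proof sidesteps the whole issue by first reducing (via two-out-of-three for $l_1$-local equivalences applied to the naturality square for $X\to Y$ over $l_1 X\xrightarrow{\cong}l_1 Y$) to the case where $X\to Y$ is the localization map $X\to l_1 X$. For that map the \emph{fiber} is $c_1 X\in\caD_1$ on the nose; tensoring the triangle $c_1 X\to X\to l_1 X$ with $Z$ then puts a $\caD_1$-object in the first slot and Lemma~\ref{hgfss} applies directly, with no rotation and no loss of a degree. That reduction is the essential idea you are missing.
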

\begin{proof}
It suffices to prove the lemma for $X\to Y$ the $l_1$-localization map of $X$. 
There is an exact triangle
$$
c_1(X) 
\longrightarrow 
X 
\longrightarrow 
Y 
\longrightarrow 
c_1(X)[1],
$$
where $c_1(X)\in\caD_1$.
Tensoring with $Z$ yields another exact triangle
$$
c_1(X)\otimes Z 
\longrightarrow 
X\otimes Z 
\longrightarrow 
Y\otimes Z 
\longrightarrow 
c_1(X)[1]\otimes Z.
$$
Since $\caD_1\subset\caD_0$ is a tensor ideal, 
$c_1(X) \otimes Z$ is in $\caD_1$.
Thus, 
by Lemma~\ref{hgfss}, 
$X\otimes Z\to Y\otimes Z$ is an $l_1$-local equivalence.
\end{proof}

\begin{corollary} \label{kjuyse}
There is an induced symmetric monoidal structure on $\caQ$ such that the localization functor $\caD_0 \to \caQ$ is symmetric monoidal.
\end{corollary}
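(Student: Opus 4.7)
The plan is to apply the standard principle that a reflective localization of a symmetric monoidal category inherits a symmetric monoidal structure as soon as the localization is compatible with the tensor product in the precise sense provided by Lemma \ref{bfgdhs}. The existing structure on $\caD_0$ is available thanks to (B1), which guarantees that $\caD_0$ is closed under $\otimes$ and contains the unit $\unit$.

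First I would identify the adjunction $L \dashv \iota$ from Lemma \ref{jyhgd} with the restriction $l_1|_{\caD_0}$: Lemma \ref{jyhgd} places $l_1(X)$ in $\caE_1 \cap \caD_0 = \caQ$ for every $X\in\caD_0$, and the unit map $X \to L(X)$ coincides with the $l_1$-localization, which is by construction an $l_1$-local equivalence. Then I would define
\[
X \otimes_{\caQ} Y := L(\iota X \otimes \iota Y), \qquad \unit_{\caQ} := L(\unit),
\]
and take the associator, unitors, and symmetry isomorphisms on $\caQ$ to be the images under $L$ of the corresponding structure isomorphisms of $\caD_0$.

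The crucial verification is that the canonical comparison maps
\[
L(X \otimes Y) \longrightarrow L(L(X) \otimes Y) \longrightarrow L(L(X) \otimes L(Y))
\]
are isomorphisms for all $X, Y \in \caD_0$. Applying Lemma \ref{bfgdhs} to the $l_1$-local equivalence $X \to L(X)$ tensored with $Y \in \caD_0$ shows that $X \otimes Y \to L(X) \otimes Y$ is an $l_1$-local equivalence, hence becomes an isomorphism after $L$; the second arrow is handled by the symmetric argument, using that the tensor product on $\caD_0$ is symmetric. These isomorphisms are natural in $X$ and $Y$ and supply the monoidal coherence data $L(X) \otimes_{\caQ} L(Y) \cong L(X \otimes Y)$ that exhibits $L$ as symmetric monoidal.

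Finally I would verify the pentagon, triangle, and hexagon axioms in $\caQ$. The main (and essentially only) obstacle is bookkeeping: one must check that the coherence diagrams in $\caQ$ are obtained from the corresponding commutative diagrams in $\caD_0$ by applying $L$ and invoking the natural isomorphisms above. Since $L$ is a functor and the class of $l_1$-local equivalences is closed under the monoidal operation by Lemma \ref{bfgdhs}, no new relations need to be checked; the coherence in $\caQ$ is forced by that in $\caD_0$. This construction also makes the resulting monoidal structure on $\caQ$ unique subject to the requirement that $L$ be symmetric monoidal.
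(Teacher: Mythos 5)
Your proposal is correct and follows the same route as the paper, which simply cites Lemma \ref{bfgdhs}; you are just making explicit the standard reflective-localization construction (tensor $=L(\iota-\otimes\iota-)$, compatibility via the lemma, coherence inherited from $\caD_0$) that the paper leaves implicit.
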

\begin{proof}
This follows directly from Lemma \ref{bfgdhs}.
\end{proof}

Later in the paper we shall refine the slice functor $s$ by defining a lax symmetric monoidal functor $\caD\to\Ho(\Mod(l_1(\unit)))$.
This will be obtained by first taking the functor $\caD\rightarrow\caD_0$ and second applying Proposition \ref{dddghhh}.
In order for this to succeed we need to ensure that $l_1(\unit)$ is a ring object in an appropriate sense,
cf.~Proposition \ref{jyssrf}.

\subsection{The graded slice functor}
\label{graded_slice}
In the following we use the setup of Section~\ref{hygrf}:
$\caC$ is the homotopy category of a stable combinatorial simplicial symmetric monoidal proper model category $\caM$, 
and $\{\caC_i\mid i\in\Z\}$ is a collection of full subcategories satisfying the axioms (A1)--(A5). 
Denote by $l_i$ and $c_i$ the localization and colocalization functors associated to $\caC_i$.

Lemma \ref{hbgfs} shows there is a canonical isomorphisms of functors 
\begin{equation}
\label{equation:slicefunctor}
l_{i+1}\circ c_i 
\cong 
c_i\circ l_{i+1}.
\end{equation}
for every $i\in\Z$. 
The \emph{$i$th slice functor $s_i$} refers to either of the isomorphic functors in (\ref{equation:slicefunctor}).
For every $X$ in $\caC$ there is an exact triangle in $\caC$
$$
c_{i+1} X \longrightarrow c_i X \longrightarrow s_i X \longrightarrow c_{i+1} X[1].
$$

We refer to the functor $s_* \colon \caC \to \caC^\Z$ defined by $s_*(X)(k)=s_k(X)$ as the \emph{full slice functor}. 
In the remaining of this section we show that $s_*$ is lax symmetric monoidal when $\caC^\Z$ is equipped with the tensor structure furnished 
by Day's convolution product~\cite{Day}; 
this is,
for $X^{\bullet}$, $Y^{\bullet}$ in $\caC^{\Z}$ and $k\in\Z$,  
$$
(X^{\bullet}\otimes Y^{\bullet})(k)=\coprod_{i+j=k}X(i)\otimes Y(j).
$$
Let $\caQ_i$ denote the core of the pair $(\caC_i,\caC_{i+1})$. 
The following lemma implies that the pairing $\otimes\colon\caC_i\times\caC_j\to\caC_{i+j}$ descends to a pairing 
$\otimes\colon\caQ_i\times\caQ_j\to\caQ_{i+j}$ since $\caQ_i$ is the localization of $\caC_i$ with respect to the $l_{i+1}$-equivalences.
\begin{lemma} \label{ngfsd}
Suppose $X \to Y$ is an $l_{i+1}$-equivalence in $\caC$ and $Z$ is an object of $\caC_j$.
Then $X \otimes Z \to Y \otimes Z$ is an $l_{i+j+1}$-equivalence.
\end{lemma}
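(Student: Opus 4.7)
The plan is to mimic the proof of Lemma~\ref{bfgdhs} essentially verbatim, replacing the pair $(\caD_0,\caD_1)$ with the graded setup and invoking axiom~(A5) in place of axiom~(B2).

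First, I would reduce to the case where $X\to Y$ is the $l_{i+1}$-localization map of $X$. To this end, consider the commutative square
\[
\xymatrix{
X\otimes Z \ar[r] \ar[d] & Y\otimes Z \ar[d] \\
l_{i+1}(X)\otimes Z \ar[r] & l_{i+1}(Y)\otimes Z
}
\]
Since $f\colon X\to Y$ is an $l_{i+1}$-equivalence, the bottom horizontal map is an isomorphism (in particular an $l_{i+j+1}$-equivalence), so by 2-out-of-3 it suffices to show that the two vertical maps, each of which is a localization map tensored with $Z$, are $l_{i+j+1}$-equivalences.

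Next, for the $l_{i+1}$-localization map $X\to l_{i+1}(X)$, I would complete it to an exact triangle
\[
c_{i+1}(X) \longrightarrow X \longrightarrow l_{i+1}(X) \longrightarrow c_{i+1}(X)[1],
\]
with $c_{i+1}(X)\in\caC_{i+1}$. Because the derived tensor product in a symmetric monoidal stable model category preserves exact triangles, tensoring with $Z$ gives the exact triangle
\[
c_{i+1}(X)\otimes Z \longrightarrow X\otimes Z \longrightarrow l_{i+1}(X)\otimes Z \longrightarrow c_{i+1}(X)[1]\otimes Z.
\]
By axiom~(A5), since $c_{i+1}(X)\in\caC_{i+1}$ and $Z\in\caC_j$, the object $c_{i+1}(X)\otimes Z$ lies in $\caC_{i+j+1}$. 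Applying Lemma~\ref{hgfss} with $\caT=\caC$ and $\caT_{\ge 0}=\caC_{i+j+1}$ then yields that $X\otimes Z\to l_{i+1}(X)\otimes Z$ is an $l_{i+j+1}$-equivalence. The same argument applied to $Y$ handles the right-hand vertical map.

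The only conceivable subtlety is the reduction step, but this is nothing more than the observation that the class of $l_{i+j+1}$-equivalences satisfies 2-out-of-3; no genuine obstacle arises. The real content is the interaction of the grading with the tensor product, which is already packaged cleanly into axiom~(A5).
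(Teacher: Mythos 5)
Your proposal is correct and matches the paper's proof exactly: the paper itself says only ``the argument is analogous to the proof of Lemma~\ref{bfgdhs},'' and your writeup is precisely that adaptation, correctly substituting axiom (A5) for (B2) and invoking Lemma~\ref{hgfss} with $\caT_{\ge 0}=\caC_{i+j+1}$. Your elaboration of the reduction step via the commuting square and 2-out-of-3 is a harmless spelling-out of what the paper leaves implicit.
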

\begin{proof}
The argument is analogous to the proof of Lemma \ref{bfgdhs}.
\end{proof}
We deduce that $\prod_{i \in \Z} \caQ_i$ acquires a tensor product turning $\prod_{i\in\Z}\caC_i\to\prod_{i\in\Z}\caQ_i$ into a 
symmetric monoidal functor. 
The latter has a right lax symmetric monoidal adjoint $\prod_{i\in\Z}\caQ_i\to\prod_{i\in\Z}\caC_i$. 
Moreover, 
the symmetric monoidal functor $\prod_{i\in\Z}\caC_i\to\caC^\Z$ has a right lax symmetric monoidal adjoint $\caC^\Z\to\prod_{i\in\Z}\caC_i$.
The full slice functor is the composite
\begin{equation}
\label{equation:kjhfjhg}
\caC 
\longrightarrow 
\caC^\Z 
\longrightarrow 
\prod_{i \in \Z} \caC_i 
\longrightarrow 
\prod_{i \in \Z} \caQ_i 
\longrightarrow 
\prod_{i \in \Z} \caC_i
\longrightarrow 
\caC^\Z.
\end{equation}
The diagonal embedding $\caC\to\caC^\Z$ is lax symmetric monoidal because it is right adjoint to the symmetric monoidal sum functor 
$\caC^\Z\to\caC$.
As noted above, 
the other functors in (\ref{equation:kjhfjhg}) are either symmetric monoidal or lax symmetric monoidal.
Thus $s_*$ is lax symmetric monoidal.

We wish to advocate a simplification of this argument by merging an integer indexed family of subcategories into a pair of subcategories.
As we shall demonstrate, 
this point of view is bootstrapped for proving localization and colocalization results in the graded setup.

Next we let $\caN=\caM^{\Z}$ so that $\caD=\Ho(\caN)=\caC^{\Z}$. 
Then $\caN$ is a symmetric monoidal model category with the monoidal product given by Day's convolution product. 
We define the pair of subcategories $(\caD_0,\caD_1)$ of $\caD$ by
$$
\caD_0
=
\prod_{i \in \Z} \caC_i \subset \caD 
\quad\mbox{and}\quad 
\caD_1
=
\prod_{i \in \Z} \caC_{i+1} \subset \caD.
$$
In the products we consider the subcategories $\caC_i$ of $\caD_0$ and $\caC_{i+1}$ of $\caD_1$ in degree $i\in\Z$.
With these definitions, 
$\caD_1 \subset \caD_0$ and $\caD_i$ is the homologically non-negative part of an accessible $t$-structure on $\caD$. 
Moreover, 
the subcategories $\caD_0$ and $\caD_1$ satisfy the axioms (B1)--(B2).

The core $\caQ$ of this pair, 
as described in Section~\ref{jytgef}, 
is $\prod_{i\in\Z}\caQ_i$. 
From Corollary~\ref{kjuyse} we may infer that $\caQ$ has a symmetric monoidal structure and the localization functor $\caD_0\to\caQ$ is 
symmetric monoidal. 
The full slice functor $s_*$ is given by composing the diagonal embedding $\caC\to\caC^\Z=\caD$ with the slice functor associated to the pair 
$(\caD_0,\caD_1)$, 
as defined in Section~\ref{jytgef}.

\section{Examples}
\label{jkmbdsjwgvkj}
Next we discuss some motivational examples of the axiomatic setup in Section \ref{section:axiomatics}.

\subsection{Motivic slice filtration} \label{jjhhhh}
First we recover the motivic slice filtration \cite{Voevodsky-slice}. 
Let $S$ be a base scheme and $\caM$ the category $\Spt_T^\Sigma(S)$ of motivic symmetric spectra with the stable model structure from 
\cite[\S 4]{jardine.symmetric}.
This is a combinatorial symmetric monoidal proper model category. 
Denote the motivic stable homotopy category $\Ho(\Spt_T^{\Sigma}(S))$ of $S$ by $\caC=\SH(S)$.
The set of objects 
$$
\{\Sigma^\infty_T X_+ \mid X \in \Sm/S\}
$$
generates a full localizing subcategory $\SH(S)^\eff$ of $\SH(S)$, 
where $\Sm/S$ denotes the category of smooth schemes of finite type over $S$ and $T$ is the Tate object.
We set $\caC_i=\Sigma_T^i \SH(S)^\eff$ for every integer $i\in\Z$.

Define $\scK$ as the set of isomorphism classes of the objects $\Sigma^\infty_T X_+[n]$ of $\SH(S)$, 
$X\in \Sm/S$ and $n\in\Z$, 
and set $\scK(q)=\Sigma^{0,q} \scK$ (the $(0,q)$th motivic suspension) for $q\in\Z$. 
In model categorical terms,  
the cofibrant replacement functor in $C^{\scK(q)}\Spt_T^\Sigma(S)$ models the colocalization functor $c_q$ associated to $\caC_q$.
We shall follow the notation in \cite{Voevodsky-slice} where the colocalization functor is denoted by $f_q$.
Define $\scS$ as the set of isomorphism classes of maps $0\to K$ where $K\in\scK$, 
and set $\scS(q)=\Sigma^{0,q} \scS$. 
The fibrant replacement functor in $L_{\scS(q)} \Spt_T^\Sigma(S)$ models the cofiber $l_q$ of the natural transformation $c_q\to\mathrm{Id}$. 

The slice functor $s_q$ is obtained by first applying a colocalization with respect to $\scK(q)$, 
viewing the resulting object as an object in the original category, 
and then localizing this object with respect to $\scS(q+1)$.
By applying our general slice filtration machinery to this example, 
we recover the motivic slice filtration introduced by Voevodsky in \cite{voevodsky.open}.

This setup generalizes easily to categories of modules over effective $E_\infty$-ring spectra, 
for example the motivic Eilenberg-Mac\,Lane spectrum over a perfect field.

\subsection{Motivic $S^1$-filtration}
This is analogous to the previous example.
Denote by $\caC=\Ho(\Spt_{S^1}^{\Sigma}(S))=\SH_{S^1}(S)$ the homotopy category of motivic $S^1$-spectra for the simplicial circle.
A filtration is obtained by setting $\caC_i=\SH_{S^1}(S)$ for $i\le 0$ and $\caC_i=\Sigma_T^i \SH_{S^1}(S)$ for $i>0$.
This can naturally be viewed as an $\N$-indexed family.
By work of Levine \cite{levineS1slices}, 
the associated motivic $S^1$-slices, 
except for the $0$th one, 
admit a further filtration whose layers are in a natural way the Eilenberg-Mac\, Lane spectra associated to a homotopy invariant 
complex of Nisnevich sheaves with transfers.

\subsection{The very effective motivic slice filtration}
Let $\SH(S)^\Veff$ be the full subcategory of $\SH(S)$ generated by homotopy colimits and extensions by the set of objects 
$\{\Sigma^\infty X_+\mid X\in\Sm/S\}$. 
This category was introduced in \cite{SOmotivivtwisted} for the purpose of showing strong convergence of spectral sequences abutting to motivic 
twisted $K$-groups.
The very effective filtration of $\SH(S)$ is defined by setting $\caC_i = \Sigma_T^i \SH(S)^\Veff$. 
If the base scheme is a field which admits a complex embedding, 
for example a number field,
then taking complex points maps $\caC_i$ to its $2i$th topological counterpart $\caC_{2i}^{\textrm{top}}$ comprising $2i$th Postnikov stages.
In this sense the very effective motivic slice filtration is more closely related to Postnikov towers in topology than the motivic slice filtration.

\subsection{Monoidal $t$-structures}

Suppose $\caM$ and $\caC$ are as in Section~\ref{hygrf} and $\caC_{\ge 0}$ is an accessible $t$-structure on $\caC$. 
We set $\caC_i = \caC_{\ge 0} [i]$ for $i\in \Z$. 
If $\caC_{\ge 0}$ is a symmetric monoidal subcategory then the family $\{\caC_i\mid i\in\Z\}$ satisfies the axioms (A1)--(A5).

In particular, 
this applies to the homotopy $t$-structure on $\SH(S)$ generated by the set $\{\Sigma^{p,p}\Sigma^\infty X_+\mid p\in\Z\mbox{ and }X\in\Sm/S\}$.
It is compatible with the smash product.

The usual $t$-structure on the topological stable homotopy category $\SH$ satisfies these properties. 
In this case the slices are Eilenberg-Mac\,Lane spectra on the stable homotopy groups of the spectrum. 
The full slice is the graded stable GEM of a given spectrum.

Letting $\caM$ be $S^{1}$-spectra of simplicial abelian groups we recover the example when $\caC$ is the derived category of abelian groups.

The subcategory $\SH(S)^\Veff\subset\SH(S)$ is another example of an accessible $t$-structure which is compatible with the smash product.

\section{Preservation of algebras}
\label{lkmmhgh}
In this section we transfer $\mathcal{O}$-algebra structures, 
where $\mathcal{O}$ is a cofibrant $C$-colored operad, 
along augmented and coaugmented functors on simplicial monoidal model categories. 
The main results are Theorems \ref{aug_alg} and \ref{coaug_alg}. 
Specializations to localization and colocalization functors and to ring and module structures play an important role in the proofs of the results in 
Section~\ref{hfgdfg}. 
Precursors on colored operads and their algebras are collected in Appendix~\ref{jnhgd}.

We consider $C$-colored operads in the category of simplicial sets acting on a simplicial monoidal model category $\mathcal{M}$. 
Recall that a monoidal model category $\mathcal{M}$ is called \emph{simplicial} if it is a simplicial model category and the simplicial action 
$\otimes$ commutes with the monoidal product $\wedge$ in $\mathcal{M}$, 
i.e., 
there are natural coherent isomorphisms
$$
A\otimes (X\wedge Y)\cong (A\otimes X)\wedge Y
$$
for every simplicial set $A$ and $X$, $Y\in\mathcal{M}$.

It is worthwhile to emphasize the distinction between the monoidal model category of simplicial sets, 
in which our colored operads take values, and the monoidal model category $\mathcal{M}$ on which they act. 
If $\mathcal{O}$ is a $C$\nobreakdash-colored operad in the category of simplicial sets and $\mathcal{M}$ is a simplicial monoidal model category, 
then an $\mathcal{O}$\nobreakdash-algebra ${\bf X}=(X(c))_{c\in C}$ is an object of $\mathcal{M}^C$ equipped with a map of $C$\nobreakdash-colored 
operads $\mathcal{O}\to {\rm End}({\bf X})$ in simplicial sets,
where ${\rm End}({\bf X})$ is defined as
$$
{\rm End}({\bf X})(c_1,\ldots, c_n;c)=\Map(X(c_1)\wedge\cdots\wedge X(c_n), X(c)).
$$
Here $\Map(-,-)$ denotes the simplicial enrichment of~$\mathcal{M}$.
We note that this is consistent with the definition of $\mathcal{O}$-algebras given in Appendix~\ref{jnhgd}, 
since there is a bijection of sets:
$$
\xymatrix{
\sSet(\mathcal{O}(c_1,\ldots, c_n;c), \Map(X(c_1)\wedge\cdots\wedge X(c_n), X(c))) \ar[d] \\
\mathcal{M}(\mathcal{O}(c_1,\ldots, c_n; c)\otimes (X(c_1)\wedge\cdots\wedge X(c_n)), X(c))  }
$$

Two $\mathcal{O}$-algebra structures $\gamma,\gamma'\colon\mathcal{O}\to\End(\mathbf{X})$ on $\mathbf{X}$ in $\mathcal{M}^C$ 
\emph{coincide up to homotopy} if  $\gamma$ and $\gamma'$ are equal in the homotopy category of $C$-colored operads in simplicial sets.

Let $(Q_1, \varepsilon_1),\ldots, (Q_n,\varepsilon_n)$ be augmented functors on $\mathcal{M}$. 
Let $J_1,\ldots, J_n \subseteq C$ be such that $J_i\cap J_k=\emptyset$ if $i\ne k$.  
The \emph{extension of $Q_1,\ldots, Q_n$ over $\mathcal{M}^C$ relative to $J_1,\ldots, J_n$} is the augmented functor $(Q,\varepsilon)$ given by 
$Q\mathbf{X}=(Q_cX(c))_{c\in C}$, 
where $Q_c=Q_i$ if $c\in J_i$ and $Q_c={\rm Id}$ otherwise. 
The augmentation $(\varepsilon_{\mathbf{X}})_c$ is defined as $(\varepsilon_i)_{X(c)}$ if $c\in J_i$ and the identity map if $c\not\in J_i$.
Similarly, 
if $(R_1,\eta_1),\ldots, (R_n,\eta_n)$ are coaugmented functors on $\mathcal{M}$, 
then the \emph{extension of $R_1,\ldots, R_n$ over $\mathcal{M}^C$ relative to $J_1,\ldots, J_n$} is the coaugmented functor $(R,\eta)$ given by 
$R\mathbf{X}=(R_cX(c))_{c\in C}$, 
where $R_c=R_i$ if $c\in J_i$ and $R_c={\rm Id}$ otherwise. 
The coaugmentation $(\eta_{\mathbf{X}})_c$ is defined as $(\eta_i)_{X(c)}$ if $c\in J_i$ and the identity map if $c\not\in J_i$.

\begin{theorem}
\label{aug_alg}
Let $(Q_1,\varepsilon_1),\ldots, (Q_n, \varepsilon_n)$ be augmented functors on a simplicial monoidal model category $\mathcal{M}$. 
Let $\mathcal{O}$ be a cofibrant $C$-colored operad in simplicial sets and consider the extension $(Q,\varepsilon)$ of $Q_1,\ldots,Q_n$ over 
$\mathcal{M}^C$ relative to $J_1,\ldots, J_n\subseteq C$. 
Let $\mathbf{X}$ be an $\mathcal{O}$-algebra and suppose $\varepsilon_{\mathbf{X}}$ induces a trivial fibration of $C$-colored collections
$$
\End_{\mathcal{O}}(Q\mathbf{X})\longrightarrow \Hom_{\mathcal{O}}(Q\mathbf{X},\mathbf{X}).
$$
(For precise definitions see Subsections \ref{endp} and \ref{homp}.)
Then $Q\mathbf{X}$ admits a (homotopy unique) $\mathcal{O}$\nobreakdash-algebra structure such that $\varepsilon_{\mathbf{X}}$ is a map between 
$\mathcal{O}$-algebras.
\end{theorem}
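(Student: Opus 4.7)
The plan is to solve a single lifting problem in the model category of $C$-colored operads in simplicial sets. The $\mathcal{O}$-algebra structure on $\mathbf{X}$ is encoded by a map $\gamma\colon\mathcal{O}\to\End(\mathbf{X})$ of $C$-colored operads, and postcomposing the corresponding operations with the augmentation $\varepsilon_{\mathbf{X}}\colon Q\mathbf{X}\to\mathbf{X}$ assembles into a canonical map $\beta\colon\mathcal{O}\to\Hom_{\mathcal{O}}(Q\mathbf{X},\mathbf{X})$: an operation $\mu\in\mathcal{O}(c_1,\ldots,c_n;c)$ is sent to the composite of $\gamma(\mu)$ with $\varepsilon_{X(c_1)}\wedge\cdots\wedge\varepsilon_{X(c_n)}$ on the source, recording the $\mathcal{O}$-action on $\mathbf{X}$ as seen through $Q\mathbf{X}$.

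The heart of the argument is then to lift $\beta$ along the trivial fibration furnished by the hypothesis. I would contemplate the diagram
$$
\xymatrix{
& \End_{\mathcal{O}}(Q\mathbf{X}) \ar@{->>}[d]^{\sim} \\
\mathcal{O} \ar@{.>}[ur]^{\tilde\gamma} \ar[r]_-{\beta} & \Hom_{\mathcal{O}}(Q\mathbf{X},\mathbf{X}).
}
$$
Weak equivalences and fibrations in the transferred model structure on $C$-colored operads (see Appendix~\ref{jnhgd}) are detected on underlying $C$-colored collections, so the hypothesis promotes the right-hand map to a trivial fibration of $C$-colored operads. Since $\mathcal{O}$ is cofibrant, a dotted filler $\tilde\gamma$ exists. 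Any two such fillers are simplicially homotopic via a cylinder-object argument, hence become equal in the homotopy category of $C$-colored operads, which yields the asserted homotopy uniqueness.

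Projecting $\tilde\gamma$ to the underlying endomorphism operad of $Q\mathbf{X}$ produces an $\mathcal{O}$-algebra structure on $Q\mathbf{X}$, and commutativity of the triangle with $\beta$ is exactly the assertion that $\varepsilon_{\mathbf{X}}$ intertwines the two actions, making it an $\mathcal{O}$-algebra morphism. No further compatibility needs to be checked because this is precisely the information packaged by $\End_{\mathcal{O}}$ and $\Hom_{\mathcal{O}}$ in Subsections~\ref{endp} and~\ref{homp}.

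The main obstacle is bookkeeping rather than the lift itself: one must verify that $\End_{\mathcal{O}}(Q\mathbf{X})$ is genuinely a $C$-colored operad, that $\beta$ is a morphism of operads and not just of collections, and that the collection-level trivial fibration hypothesis actually converts into a trivial fibration in the operadic model structure so that cofibrancy of $\mathcal{O}$ can be invoked. These points are addressed by the definitions of $\End_{\mathcal{O}}$ and $\Hom_{\mathcal{O}}$ in Subsections~\ref{endp} and~\ref{homp} together with standard transfer theorems for colored operads valued in simplicial sets.
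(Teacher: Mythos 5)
The central step in your argument cannot be carried out as stated. You want to solve a lifting problem
$$
\xymatrix{
& \End_{\mathcal{O}}(Q\mathbf{X}) \ar[d] \\
\mathcal{O} \ar@{.>}[ur] \ar[r]_-{\beta} & \Hom_{\mathcal{O}}(Q\mathbf{X},\mathbf{X})
}
$$
in the semi model category of $C$-colored operads, and to invoke cofibrancy of $\mathcal{O}$. But $\Hom_{\mathcal{O}}(Q\mathbf{X},\mathbf{X})$ is only a $C$-colored \emph{collection}, not an operad: the objects $\Map(QX(c_1)\wedge\cdots\wedge QX(c_n),X(c))$ have no operadic composition, since the codomain $X(c)$ cannot be substituted back as an input $QX(c_i)$. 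Consequently the vertical map cannot be ``promoted to a trivial fibration of $C$-colored operads,'' $\beta$ cannot be a map of operads, and the lifting axiom for the transferred (semi) model structure on $\Oper_C(\sSet)$ does not apply. You flag exactly this bookkeeping as a concern in your last paragraph, but the concern is not resolvable in the framework you set up; it is the crux of the matter, not a routine check.

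The paper sidesteps this by first forming the pullback of collections
$$
\xymatrix{
\End_{\mathcal{O}}(\varepsilon_{\mathbf{X}}) \ar[r]^{\rho} \ar[d]_{\tau} & \End_{\mathcal{O}}(Q\mathbf{X}) \ar[d] \\
\End_{\mathcal{O}}(\mathbf{X}) \ar[r] & \Hom_{\mathcal{O}}(Q\mathbf{X},\mathbf{X})
}
$$
and observing (as in (\ref{end_f})) that $\End_{\mathcal{O}}(\varepsilon_{\mathbf{X}})$ \emph{does} inherit an operad structure from the two endomorphism operads at the corners, with $\tau$ a map of operads. Since trivial fibrations of collections pull back to trivial fibrations, and fibrations/weak equivalences of operads are detected on collections, $\tau$ is a trivial fibration of operads. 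One then lifts the genuinely operadic map $\mathcal{O}\to\End_{\mathcal{O}}(\mathbf{X})$ (the given algebra structure on $\mathbf{X}$) against $\tau$, and the component $\rho$ of the lift is the new algebra structure on $Q\mathbf{X}$; commutativity of the pullback square is precisely the compatibility making $\varepsilon_{\mathbf{X}}$ an $\mathcal{O}$-algebra map. Your map $\beta$ is the common diagonal $\mathcal{O}\to\Hom_{\mathcal{O}}(Q\mathbf{X},\mathbf{X})$ of this square, so your computation is not wrong, but it lives in the category of collections; the pullback is what converts it into a solvable operadic lifting problem. Homotopy uniqueness then follows, as in the paper, because any two lifts against the trivial fibration $\tau$ agree in the homotopy category.
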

\begin{proof}
Consider the pullback diagram in the category of $C$-colored collections of simplicial sets:
$$
\xymatrix{
\End_{\mathcal{O}}(\varepsilon_{\mathbf{X}})\ar@{.>}[r]^{\rho} \ar@{.>}[d]_{\tau} & \End_{\mathcal{O}}(Q\mathbf{X}) \ar[d] \\
\End_{\mathcal{O}}(\mathbf{X}) \ar[r] & \Hom_{\mathcal{O}}(Q\mathbf{X}, \mathbf{X})  }
$$
Here $\End_{\mathcal{O}}(\mathbf{X})$ denotes the restricted endomorphism $C$-colored operad associated to $\mathbf{X}$ as in~(\ref{endp}), 
and $\Hom_{\mathcal{O}}(\mathbf{X}, \mathbf{Y})$ is the $C$-colored collection defined by
$$
\Hom_{\mathcal{O}}(\mathbf{X},\mathbf{Y})(c_1,\ldots, c_n;c)
=
\Map(X(c_1)\wedge \cdots\wedge X(c_n), Y(c))\; \mbox{ if \;$\mathcal{O}(c_1,\ldots,c_n;c)\ne0$},
$$
and zero otherwise, as in (\ref{homp}).
Here $\tau$ is a trivial fibration of operads, since it is the pullback of a trivial fibration. 
Since the $C$-colored operad $\mathcal{O}$ is cofibrant there exists a lifting indicated in the diagram:
$$
\xymatrix{
& \End_{\mathcal{O}}(\varepsilon_\mathbf{X})\ar[d]^{\tau} \\
\mathcal{O} \ar[r] \ar@{.>}[ur] & \End_{\mathcal{O}}(\mathbf{X}) }
$$
That lifting endows $Q\mathbf{X}$ with an $\mathcal{O}$-algebra structure such that $\varepsilon_{\mathbf{X}}$ is a map of $\mathcal{O}$-algebras.

In order to prove uniqueness, 
suppose $\gamma, \gamma'\colon \mathcal{O}\to \End_{\mathcal{O}}(Q\mathbf{X})$ are $\mathcal{O}$-algebra structures on $Q\mathbf{X}$ and 
$\varepsilon_{\mathbf{X}}$ is a map of $\mathcal{O}$-algebras for each of them, 
i.e., 
$\gamma$ and $\gamma'$ factors through $\End_{\mathcal{O}}(\varepsilon_{\mathbf{X}})$. 
Let $\delta,\delta'\colon \mathcal{O}\to \End_{\mathcal{O}}(\varepsilon_{\mathbf{X}})$ be the maps such that $\gamma=\rho\circ\delta$ and 
$\gamma'=\rho\circ\delta'$. 
Since $\tau$ is a trivial fibration and $\tau\circ\delta=\tau\circ\delta'$, 
it follows that $\delta$ and $\delta'$ are equal in the homotopy category, 
and hence so are $\gamma$ and $\gamma'$.
\end{proof}

Next we state the analogue of Theorem~\ref{aug_alg} for coaugmented functors.
The proof of this result is basically the same as for augmented functors; 
we leave further details to the interested reader.
\begin{theorem}
Let $(R_1,\eta_1),\ldots, (R_n, \eta_n)$ be coaugmented functors on  a simplicial monoidal model category $\mathcal{M}$. 
Let $\mathcal{O}$ be a cofibrant $C$-colored operad in simplicial sets and consider the extension $(R,\varepsilon)$ of $R_1,\ldots,R_n$ over 
$\mathcal{M}^C$ relative to $J_1,\ldots, J_n\subseteq C$. 
Let $\mathbf{X}$ be an $\mathcal{O}$-algebra and suppose that the map
$$
\End_{\mathcal{O}}(R\mathbf{X})\longrightarrow \Hom_{\mathcal{O}}(\mathbf{X},R\mathbf{X})
$$
of $C$-colored collections induced by $\eta_{\mathbf{X}}$ is a trivial fibration. 
Then $R\mathbf{X}$ admits a (homotopy unique) $\mathcal{O}$\nobreakdash-algebra structure such that $\eta_{\mathbf{X}}$ is a map between
$\mathcal{O}$-algebras.
\label{coaug_alg}
\end{theorem}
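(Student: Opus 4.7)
The plan is to dualize the argument of Theorem \ref{aug_alg} systematically, replacing the augmentation $\varepsilon_{\mathbf{X}}$ by the coaugmentation $\eta_{\mathbf{X}}$ and swapping source and target in the auxiliary $\Hom$-collection. First, I would form the pullback of $C$-colored collections of simplicial sets
$$
\xymatrix{
\End_{\mathcal{O}}(\eta_{\mathbf{X}})\ar@{.>}[r]^{\rho} \ar@{.>}[d]_{\tau} & \End_{\mathcal{O}}(R\mathbf{X}) \ar[d] \\
\End_{\mathcal{O}}(\mathbf{X}) \ar[r] & \Hom_{\mathcal{O}}(\mathbf{X}, R\mathbf{X})  }
$$
in which the bottom horizontal map postcomposes an $n$-ary operation $X(c_1)\wedge\cdots\wedge X(c_n)\to X(c)$ with $\eta_{X(c)}$, while the right vertical map precomposes $g\colon RX(c_1)\wedge\cdots\wedge RX(c_n)\to RX(c)$ with $\eta_{X(c_1)}\wedge\cdots\wedge\eta_{X(c_n)}$. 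Naturality of $\eta$ endows the pullback $\End_{\mathcal{O}}(\eta_{\mathbf{X}})$ with a $C$-colored operad structure for which $\rho$ and $\tau$ are operad maps.

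Second, the right vertical map is a trivial fibration of operads by hypothesis, so its pullback $\tau$ is a trivial fibration of operads as well. Cofibrancy of $\mathcal{O}$ together with the existing algebra structure $\mathcal{O}\to\End_{\mathcal{O}}(\mathbf{X})$ then produces a lift
$$
\xymatrix{
& \End_{\mathcal{O}}(\eta_{\mathbf{X}})\ar[d]^{\tau} \\
\mathcal{O} \ar[r] \ar@{.>}[ur] & \End_{\mathcal{O}}(\mathbf{X})  }
$$
whose composite with $\rho$ equips $R\mathbf{X}$ with the required $\mathcal{O}$-algebra structure making $\eta_{\mathbf{X}}$ a morphism of $\mathcal{O}$-algebras.

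For homotopy uniqueness, I would repeat the closing argument of Theorem \ref{aug_alg}: given two $\mathcal{O}$-algebra structures $\gamma,\gamma'\colon\mathcal{O}\to\End_{\mathcal{O}}(R\mathbf{X})$ on $R\mathbf{X}$ for which $\eta_{\mathbf{X}}$ is a map of $\mathcal{O}$-algebras, both factor through $\End_{\mathcal{O}}(\eta_{\mathbf{X}})$ as $\gamma=\rho\circ\delta$ and $\gamma'=\rho\circ\delta'$, with $\tau\circ\delta=\tau\circ\delta'$ equal to the original structure on $\mathbf{X}$. Since $\tau$ is a trivial fibration and $\mathcal{O}$ is cofibrant, $\delta$ and $\delta'$ agree in the homotopy category of $C$-colored operads, hence so do $\gamma$ and $\gamma'$.

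The main obstacle, and really the only nontrivial point, is confirming that the collection-level pullback $\End_{\mathcal{O}}(\eta_{\mathbf{X}})$ carries an operad structure for which $\tau$ is a trivial fibration of operads rather than merely of collections. This reduces to checking that the two maps into $\Hom_{\mathcal{O}}(\mathbf{X},R\mathbf{X})$ are compatible with operadic composition, which follows from naturality of $\eta$ and the symmetric monoidal structure on $\mathcal{M}$; it is precisely the dual of the compatibility verification already implicit in the proof of Theorem \ref{aug_alg}.
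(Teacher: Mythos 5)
Your proposal is correct and is exactly the dualization that the paper has in mind: the paper explicitly states that ``the proof of this result is basically the same as for augmented functors'' and leaves the details to the reader, and you have supplied precisely those details by forming the pullback $\End_{\mathcal{O}}(\eta_{\mathbf{X}})$ of $\End_{\mathcal{O}}(R\mathbf{X})$ and $\End_{\mathcal{O}}(\mathbf{X})$ over $\Hom_{\mathcal{O}}(\mathbf{X},R\mathbf{X})$, observing that $\tau$ is a trivial fibration of operads, and lifting against cofibrancy of $\mathcal{O}$, with uniqueness by the same factoring argument. The only minor point worth noting is that the operad structure on the pullback is already guaranteed in general by the paper's discussion of $\End(\mathbf{f})$ in Appendix A, so your verification at the end, while welcome, is not strictly needed.
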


\begin{remark}
Theorems \ref{aug_alg} and \ref{coaug_alg} hold true if the $C$-colored operad $\mathcal{O}$ is non-symmetric. 
We may replace $\mathcal{O}$ by its symmetric version $\Sigma \mathcal{O}$, 
and both yield the same class of algebras (see Remark~\ref{symmetrization}).
\end{remark}

\section{Slices of rings and modules} 
\label{hfgdfg}

Let $\caM$, $\caC$ and  $\{\caC_i\mid i\in \Z\}$ be as in Section~\ref{hygrf}. 
We assume that the family $\{\caC_i\mid i\in \Z\}$ satisfies the axioms (A1)--(A5).
Recall from Appendix~\ref{jnhgd} that $E_{\infty}$ is a $\Sigma$-cofibrant resolution of the commutative operad and $A_{\infty}$ is a 
$\Sigma$-cofibrant resolution of the associative operad. 
For an $E_\infty$- or $A_\infty$-object $\E$ in $\caM$, 
we denote by $\D(\E)=\Ho(\Mod(Q\E))$ the homotopy category of (left) $Q\E$-modules, 
where $Q\E \to \E$ is a cofibrant replacement in the category of $E_{\infty}$- or $A_{\infty}$-algebras 
(using the semi model structure reviewed in Appendix~\ref{jnhgd}).
Our first two results show that the full slice functor $s_*$ preserves ring and module structures.
\begin{theorem} \label{hgdfggg}
Let $\E$ be an $E_\infty$- resp.\ $A_\infty$-object in $\caM$.
Then the full slice $s_* \E$ has the natural structure of an $E_\infty$- resp.\ $A_\infty$-object in $\caM^\Z$.
\end{theorem}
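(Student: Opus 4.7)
The plan is to exploit the pair formulation at the end of Section \ref{graded_slice}. Take $\caN = \caM^\Z$ equipped with Day convolution and consider the pair $(\caD_0, \caD_1) = (\prod_{i \in \Z} \caC_i, \prod_{i \in \Z} \caC_{i+1})$ in $\caD = \caC^\Z$; then the slice attached to this pair is $s = l_1 \circ c_0$, and the full slice factors as $s_* = s \circ \Delta$, where $\Delta \colon \caM \to \caM^\Z$ is the diagonal embedding. Since $\Delta$ is right adjoint to the symmetric monoidal sum functor $\caM^\Z \to \caM$ it is lax symmetric monoidal, so it sends $E_\infty$- (resp.\ $A_\infty$-) algebras to algebras of the same type. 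Thus $\Delta \E$ inherits a natural $\mathcal{O}$-algebra structure, where $\mathcal{O}$ denotes the chosen cofibrant one-colored operad.

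To transport this structure along $c_0$ and $l_1$, I would invoke Theorems \ref{aug_alg} and \ref{coaug_alg} in turn. By the results of Section \ref{section:axiomatics}, at the level of $\caN$ the colocalization $c_0$ is modeled by cofibrant replacement in the colocalized model structure (an augmented functor) and $l_1$ is modeled by fibrant replacement in the localized model structure (a coaugmented functor). Applying Theorem \ref{aug_alg} to $(c_0, \varepsilon)$ with $\mathbf{X} = \Delta \E$ yields an $\mathcal{O}$-algebra structure on $c_0 \Delta \E$ under which the counit $\varepsilon \colon c_0 \Delta \E \to \Delta \E$ is a map of $\mathcal{O}$-algebras, and then applying Theorem \ref{coaug_alg} to $(l_1, \eta)$ with input $c_0 \Delta \E$ produces the desired $\mathcal{O}$-algebra structure on $l_1 c_0 \Delta \E = s_* \E$. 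Homotopy uniqueness at each stage is part of the conclusion of those theorems.

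The hard part is verifying the triviality conditions on the induced maps of one-colored collections. In the first case the arity $n$ component is
$$
\Map_{\caM^\Z}\bigl((c_0 \Delta \E)^{\wedge n}, c_0 \Delta \E\bigr) \longrightarrow \Map_{\caM^\Z}\bigl((c_0 \Delta \E)^{\wedge n}, \Delta \E\bigr),
$$
whose homotopy fiber is controlled by $\Map_{\caM^\Z}((c_0 \Delta \E)^{\wedge n}, l_0 \Delta \E)$. By axiom (A5) the source lies in $\caD_0$, while $l_0 \Delta \E$ lies in the orthogonal complement of the $t$-structure, so this mapping space is contractible. In the second case one needs
$$
\Map_{\caM^\Z}\bigl((l_1 c_0 \Delta \E)^{\wedge n}, l_1 c_0 \Delta \E\bigr) \longrightarrow \Map_{\caM^\Z}\bigl((c_0 \Delta \E)^{\wedge n}, l_1 c_0 \Delta \E\bigr)
$$
to be a trivial fibration; by Lemma \ref{jyhgd} both $c_0 \Delta \E$ and $l_1 c_0 \Delta \E$ lie in $\caD_0$, and iterated application of Lemma \ref{bfgdhs} (whose tensor ideal hypothesis holds for our pair by axiom (A5)) shows the induced map $(c_0 \Delta \E)^{\wedge n} \to (l_1 c_0 \Delta \E)^{\wedge n}$ is an $l_1$-equivalence; mapping into the $l_1$-local target then yields the needed weak equivalence. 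The fibration property follows from the simplicial pushout-product and pullback-hom axioms combined with the cofibrancy and fibrancy properties of the replacement functors, and the final remark of Section \ref{lkmmhgh} shows the same argument handles the non-symmetric $A_\infty$ case.
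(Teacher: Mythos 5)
Your argument is correct and mirrors the paper's proof closely: the paper likewise works in $\caN = \caM^\Z$ with the pair $(\prod_{i}\caC_i, \prod_{i}\caC_{i+1})$, observes that the diagonal is lax symmetric monoidal, and transports the operad action across $c_0$ and $l_1$ via Theorems \ref{aug_alg} and \ref{coaug_alg}. The only organizational difference is that the paper routes those two transports through the intermediate Propositions \ref{htgrfdd} and \ref{jyssrf}, whose proofs contain precisely the trivial-fibration verifications you carry out inline.
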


\begin{theorem} \label{jyhtgdd}
Let $\E$ be an $E_\infty$- or $A_\infty$-object in $\caM$ and $\M$, $\sfN$ be $\E$-modules. 
\begin{itemize}
\item[{\rm (i)}] The object $s_*\M$ has a natural structure of an $s_* \E$-module in $\caM^\Z$ (in the strict sense). 
The assignment $\M \mapsto s_* \M$ can be enhanced to a functor $\D(\E) \to \D(s_* \E)$. 
This functor is exact provided every $\caC_i$ is a triangulated subcategory.
\item[{\rm (ii)}] If $\E$ is an $E_\infty$-object, 
then the functor $\D(\E) \to \D(s_*(\E))$ is lax symmetric monoidal.
In the motivic slice filtration, 
the natural transformation
\begin{equation}
\label{motivic_transf}
s_*(\M) \wedge_{s_*(\E)} s_*(\sfN) \longrightarrow s_*(\M \wedge_\E \sfN)
\end{equation}
is an isomorphism if $\M$ or $\sfN$ is $\E$-cellular.
\end{itemize}
\end{theorem}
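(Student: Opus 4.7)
The plan is to encode the pair $(\E,\M)$ as an algebra over a two-colored $\Sigma$-cofibrant operad $\mathcal{O}$ in simplicial sets whose algebras are pairs consisting of an $E_\infty$- (respectively $A_\infty$-) object equipped with a module. Decomposing $s_i = l_{i+1} \circ c_i$ for each $i \in \Z$, the idea is to transfer the $\mathcal{O}$-action first along the augmented functor $c_i$ using Theorem \ref{aug_alg}, and then along the coaugmented functor $l_{i+1}$ using Theorem \ref{coaug_alg}, with both functors extended over $\caM^{\{a,m\}}$ relative to the algebra color $a$ and the module color $m$. Assembling these per-slice structures via the graded setup of Section \ref{graded_slice} and invoking the standard rectification from $\mathcal{O}$-algebras over a cofibrant colored operad to strict modules yields the desired $s_*\E$-module structure on $s_*\M$ in $\caM^\Z$.

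The key technical point is to verify the trivial-fibration hypothesis of Theorems \ref{aug_alg}--\ref{coaug_alg} for the maps of $C$-colored collections. Concretely, this amounts to checking that maps of mapping spaces such as
$$
\Map\bigl(c_i\E^{\wedge p}\wedge c_i\M,\, c_i\M\bigr)\longrightarrow \Map\bigl(c_i\E^{\wedge p}\wedge c_i\M,\, \M\bigr)
$$
are trivial Kan fibrations, and analogously with $l_{i+1}$ in place of $c_i$. Axiom (A5) places $c_i\E^{\wedge p}\wedge c_i\M$ in $\caC_{(p+1)i}$, while the fiber $l_i\M$ of $c_i\M \to \M$ is right-orthogonal to $\caC_i$ by the defining $t$-structure adjunction; a dual orthogonality argument handles the $l_{i+1}$ step. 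The homotopy-uniqueness clauses of Theorems \ref{aug_alg}--\ref{coaug_alg} then make the construction functorial in $\M$, and exactness of $\D(\E) \to \D(s_*\E)$ under the assumption that each $\caC_i$ is triangulated follows from the fact that $c_i$ and $l_i$ are then exact functors of triangulated subcategories.

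For part (ii), the lax symmetric monoidal structure on $\D(\E)\to\D(s_*\E)$ is inherited from the lax symmetric monoidal structure on the full slice functor established in Section \ref{graded_slice}, lifted to modules using the naturality provided by the cofibrant colored operad. To prove the cellularity claim in the motivic slice filtration, let $\caH \subset \D(\E)$ be the full subcategory of those $\M$ for which \eqref{motivic_transf} is an isomorphism for every $\sfN$. Both sides agree with $s_*\sfN$ when $\M = \E$, so $\E \in \caH$; moreover $\caH$ is a localizing subcategory, because in the motivic setting both $s_*$ and the derived relative smash product are exact and commute with arbitrary coproducts. Since the $\E$-cellular modules are by definition the smallest localizing subcategory of $\D(\E)$ containing $\E$, they all lie in $\caH$.

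The main obstacle I anticipate is the verification of the trivial-fibration condition on the colored endomorphism collections in a way that is uniform in $i \in \Z$ and in the arity of the operations, especially the need to choose cofibrant resolutions in $\caM$ so that the simplicial mapping spaces of Section \ref{lkmmhgh} genuinely compute the derived enrichments; once this is in place, the orthogonality inputs flow out of the $t$-structure formalism, but the passage from homotopy-level vanishing to the required trivial fibrations on the mapping objects requires a careful lift.
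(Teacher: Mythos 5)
Your per-slice decomposition $s_i = l_{i+1}\circ c_i$, with the module operad transferred along each $c_i$ and $l_{i+1}$ separately, is where the argument breaks down. The trivial-fibration check you propose, from $\Map(c_i\E^{\wedge p}\wedge c_i\M,\, c_i\M)$ to $\Map(c_i\E^{\wedge p}\wedge c_i\M,\, \M)$, is an attempt to transfer a $c_i\E$-module structure onto $c_i\M$ inside $\caM$ for a fixed $i$; but the theorem asserts that $s_*\M$ is an $s_*\E$-module in $\caM^\Z$ for the Day convolution product, which means supplying structure maps $s_i\E\wedge s_j\M\to s_{i+j}\M$ for \emph{all} pairs $(i,j)$. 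These off-diagonal maps simply are not visible in per-slice data. In fact even the diagonal check you write does not land where you want it to: by axiom (A5), $c_i\E\wedge c_i\M$ lies in $\caC_{2i}$, not $\caC_i$, so the natural target of a ``diagonal'' action map would be $s_{2i}\M$. What the per-slice transfer via Theorems \ref{aug_alg}--\ref{coaug_alg} actually yields is the weaker Corollary \ref{dgdgggg}, that each $s_q\M$ is an $s_0\E$-module; the graded structure of Theorem \ref{jyhtgdd} is strictly more. Your ``assembly via Section \ref{graded_slice}'' is exactly the point that needs an argument, and it is not one.

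The paper handles this by working in the merged pair $(\caD_0,\caD_1)$ inside $\caN=\caM^\Z$ from the outset, and -- importantly -- by not invoking the colored-operad transfer theorems at the module level at all. Having produced the $E_\infty$-/$A_\infty$-ring maps $c_0'd(\E)\to d(\E)$ and $c_0'd(\E)\to l_1'c_0'd(\E)$ (Propositions \ref{htgrfdd} and \ref{jyssrf}), it builds $\D(\E)\to\D(s_*\E)$ directly as a composite of a restriction along $c_0'd(\E)\to d(\E)$, the colocalization internal to the module category $\D(c_0'd(\E))$ (Proposition \ref{hgfdsdd}), and the factorization of the localization through modules over the localized ring (Proposition \ref{dddghhh}). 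This produces strict modules immediately, whereas your route outputs $(\LMod_\Com)_\infty$-algebras and then appeals to a ``standard rectification''; since $\modu_\Com$ is not $\Sigma$-cofibrant, that rectification is not a consequence of the Quillen-equivalence theorem in Appendix \ref{jnhgd} and would need to be justified separately. One small further point on part (ii): motivic $\E$-cellularity is generated by the bigraded $\E$-spheres $\Sigma^{p,q}\E$, not by $\E$ alone (localizing subcategories are not automatically closed under smashing with $T$), so the base case of your localizing-subcategory induction should run over all $\E$-spheres, which is exactly what the paper does by fixing $\M$ and varying $\sfN$.
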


The theorems can be applied in the case when $\E$ is the unit $\unit$ of $\caM$. 
In particular, 
this shows that the full slice functor $s_* \colon \caC \to \caC^\Z$ can be refined to a functor $\caC \to \Mod(s_*(\unit))$.

The statements of Theorems \ref{hgdfggg} and \ref{jyhtgdd} employ the axiomatic setups of Sections \ref{hygrf} and \ref{graded_slice}. 
However, for the proofs,
which take up a significant part of this section, 
it will be convenient to work in the setup of Section~\ref{jytgef}.
Recall the notations $\caN$, $\caD$, $\caD_0$ and $\caD_1$ from Section~\ref{jytgef} and assume the pair $(\caD_0,\caD_1)$ satisfies (B1)--(B2).  
The following contains some standard notions from Appendices \ref{jnhgd} and \ref{ytrgvv}.

\begin{proposition} \label{htgrfdd}
Let $\E$ be an $E_\infty$- resp.\ $A_\infty$-object in $\caN$.
Then the colocalization $c_0 \E$ has a (homotopy unique) natural structure of an $E_\infty$- resp.~$A_\infty$-object in $\caN$ such that $c_0\E\to\E$ 
is represented by a zig-zag of maps of $E_\infty$- resp.~$A_\infty$-objects.
\end{proposition}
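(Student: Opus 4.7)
The plan is to reduce the proposition to Theorem \ref{aug_alg} applied to the augmented functor $(Q,\varepsilon)=(c_0,\,c_0\to\mathrm{Id})$ and the cofibrant operad $\mathcal{O}\in\{E_\infty,A_\infty\}$ with a single color. First I would set up appropriate model-categorical representatives: using the semi model structure on $\mathcal{O}$-algebras (Appendix~\ref{jnhgd}), replace $\E$ by a fibrant $\mathcal{O}$-algebra $\widetilde{\E}$ whose underlying object is fibrant in $\caN$; then model $c_0$ by cofibrant replacement in the right Bousfield colocalization $C^{\mathscr{K}}\caN$ from Lemma~\ref{jyhtgd}. Since the cofibrations of $C^{\mathscr{K}}\caN$ are contained in those of $\caN$ and the fibrations agree with those of $\caN$, the resulting arrow $c_0\widetilde{\E}\to\widetilde{\E}$ is a fibration in $\caN$ between fibrant objects with cofibrant source. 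Axiom (B1) makes $(c_0\widetilde{\E})^{\wedge n}$ lie in $\caD_0$, and the pushout-product axiom makes it cofibrant in $\caN$.

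The core step is to verify that, for every $n\geq 0$, the map
$$
\tau_n\colon\Map\bigl((c_0\widetilde{\E})^{\wedge n},c_0\widetilde{\E}\bigr)\longrightarrow\Map\bigl((c_0\widetilde{\E})^{\wedge n},\widetilde{\E}\bigr)
$$
is a trivial Kan fibration, since this is the arity-$n$ piece of the trivial fibration hypothesis $\End_{\mathcal{O}}(c_0\widetilde{\E})\to\Hom_{\mathcal{O}}(c_0\widetilde{\E},\widetilde{\E})$ of Theorem \ref{aug_alg}. The fibration property is immediate from SM7 applied to the fibration $c_0\widetilde{\E}\to\widetilde{\E}$ between fibrant objects and the cofibrant source. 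For the weak equivalence I would use stability: the cofiber sequence $c_0\widetilde{\E}\to\widetilde{\E}\to l_0\widetilde{\E}$ is also a fiber sequence, so every fiber of $\tau_n$ is equivalent to $\Map\bigl((c_0\widetilde{\E})^{\wedge n},l_0\widetilde{\E}[-1]\bigr)$. Its homotopy groups are $\Hom_{\caD}\bigl((c_0\widetilde{\E})^{\wedge n}[k],\,l_0\widetilde{\E}[-1]\bigr)$ for $k\geq 0$, and these vanish by the orthogonality of the $t$-structure: by (B1) combined with closure of $\caD_0$ under suspension, the source lies in $\caD_0$, while closure of $\caE_0$ under desuspension puts the target in $\caE_0$, and $\Hom_{\caD}(\caD_0,\caE_0)=0$.

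Theorem \ref{aug_alg} then produces a homotopy unique $\mathcal{O}$-algebra structure on $c_0\widetilde{\E}$ making $c_0\widetilde{\E}\to\widetilde{\E}$ a map of $\mathcal{O}$-algebras. Combined with the weak equivalence $\E\to\widetilde{\E}$ of $\mathcal{O}$-algebras from the fibrant replacement, one obtains the zig-zag
$$
c_0\widetilde{\E}\longrightarrow\widetilde{\E}\longleftarrow\E
$$
of $\mathcal{O}$-algebra maps representing $c_0\E\to\E$ in the homotopy category; naturality in $\E$ follows from the homotopy uniqueness clause. I expect the main obstacle to be the model-categorical bookkeeping in the first paragraph: one must simultaneously arrange cofibrancy of $c_0\widetilde{\E}$ (hence of its tensor powers) in $\caN$, fibrancy of $\widetilde{\E}$ as an $\mathcal{O}$-algebra, and the genuine fibration property of $c_0\widetilde{\E}\to\widetilde{\E}$ in $\caN$, so that SM7 applies and the collection-level trivial fibration hypothesis of Theorem \ref{aug_alg} is actually met.
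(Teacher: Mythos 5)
Your proposal is correct and follows essentially the same route as the paper's proof: reduce to Theorem~\ref{aug_alg} with the cofibrant $E_\infty$- or $A_\infty$-operad, model $c_0$ by cofibrant replacement in the Bousfield colocalization $C^{\mathscr{K}}\caN$, and verify the trivial-fibration hypothesis arity by arity. The only noticeable difference is cosmetic: where you establish the weak equivalence part of $\tau_n$ by taking fibers of the cofiber/fiber sequence $c_0\widetilde{\E}\to\widetilde{\E}\to l_0\widetilde{\E}$ and invoking $t$-structure orthogonality $\Hom_\caD(\caD_0,\caE_0)=0$, the paper states the equivalent fact more economically by observing that $(Q\E)^{\otimes n}$ is $\mathscr{K}$-colocal (using (B1)) and $Q\E\to\E$ is a $\mathscr{K}$-colocal equivalence; these are two presentations of the same orthogonality. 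One small bookkeeping point you glossed over, which the paper flags explicitly: in the semi model structure on $\mathcal{O}$-algebras, fibrant replacement is only guaranteed from a cofibrant algebra, so the zig-zag should really pass first through a cofibrant replacement of $\E$ before taking the fibrant one; you acknowledge the bookkeeping issue but do not pin down this particular step.
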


\begin{proof}
We may assume the $E_\infty$- or $A_\infty$-operad is cofibrant 
(if not, then we can appeal to a zig-zag of corresponding algebra objects).
In order to apply Theorem~\ref{aug_alg} we first introduce necessary notation.
Let $\mathscr{K}$ be a set of objects of $\caD$ that generates $\caD_0$ by homotopy colimits.
Fix a cofibrant replacement functor $Q$ in the colocalization $C^{\mathscr{K}}\caN$ of $\caN$ with respect to $\mathscr{K}$ 
(see Definition~\ref{Bcoloc}).
There is a natural augmentation $\varepsilon\colon Q\to\mathrm{Id}$ and $Q$ models the functor $c_0$.
We may also assume that $\E$ is underlying fibrant by using the semi model structure on $E_\infty$- or $A_\infty$-algebras 
(in fact a zig-zag by first replacing $\E$ cofibrantly).
In order to verify the assumptions in Theorem \ref{aug_alg}, 
we show that 
\begin{equation}
\Map(Q\E \otimes \stackrel{(n)}{\cdots} \otimes Q\E,Q\E) 
\longrightarrow 
\Map(Q\E \otimes\stackrel{(n)}{\cdots} \otimes Q\E,\E)
\label{equ01}
\end{equation}
is a trivial fibration of simplicial sets for every $n\ge 0$. 

By assumption, 
$Q\E$ and the unit are cofibrant in $\caN$ and the map $Q\E \to \E$ is a fibration. 
This shows that (\ref{equ01}) is a fibration for every $n\ge 0$.
Moreover, $Q\E$ and $\E$ are fibrant so that the simplicial enrichments coincide with the homotopy function complexes.
Now $Q\E \otimes \cdots \otimes Q\E$ is $\mathscr{K}$-colocal since $\caD_0$ is closed under the tensor product and it contains the unit. 
Moreover, 
$Q\E \to \E$ is a $\mathscr{K}$-colocal equivalence.
This implies that (\ref{equ01}) is a weak equivalence.
\end{proof}

\begin{proposition} \label{jyssrf}
Let $\E$ be an $E_\infty$- resp.\ $A_\infty$-object in $\caN$ with underlying object in $\caD_0$.
Then the localization $l_1 \E$ has a (homotopy unique) natural structure of an $E_\infty$- resp.\ $A_\infty$-object in $\caN$ such that $\E\to l_1\E$ 
is represented by a zig-zag of maps of $E_\infty$- resp.\ $A_\infty$-objects.
\end{proposition}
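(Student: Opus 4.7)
The plan is to mirror the proof of Proposition \ref{htgrfdd}, substituting Theorem \ref{coaug_alg} (the coaugmented version) for Theorem \ref{aug_alg} and the fibrant replacement in a Bousfield localization for the cofibrant replacement in a colocalization. As in that proof, I may first assume the $E_\infty$- or $A_\infty$-operad is cofibrant, appealing otherwise to a zig-zag through a cofibrant resolution.

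To set things up, invoke Proposition \ref{htgrfd} to fix a set $\mathscr{K}$ generating $\caD_1$ by homotopy colimits, and set $\mathscr{S}=\{0\to K\mid K\in\mathscr{K}\}$. By Lemma \ref{jyhdht}, the $l_1$-localization functor is modeled by a fibrant replacement functor $R$ in $L_{\mathscr{S}}\caN$, with natural coaugmentation $\eta\colon\mathrm{Id}\to R$. Using the semi model structure on $E_\infty$- or $A_\infty$-algebras in $\caN$, I may first cofibrantly replace $\E$, and then factor $\eta_{\E}\colon\E\to R\E$ functorially so that $\E$ is cofibrant in $\caN$, the coaugmentation $\eta_{\E}$ is a cofibration, and $R\E$ is both underlying fibrant and $\mathscr{S}$-local. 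All of this happens up to a zig-zag of weak equivalences between $E_\infty$- or $A_\infty$-algebras, which is exactly what the statement permits.

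The core step is to verify the hypothesis of Theorem \ref{coaug_alg}, namely that for every $n\ge 0$
\begin{equation*}
\Map(R\E\otimes\stackrel{(n)}{\cdots}\otimes R\E,\,R\E)\longrightarrow\Map(\E\otimes\stackrel{(n)}{\cdots}\otimes\E,\,R\E)
\end{equation*}
is a trivial fibration of simplicial sets. That it is a fibration follows from the pushout-product axiom, since $\eta_{\E}$ is a cofibration between cofibrant objects and $R\E$ is fibrant. The weak equivalence statement is where Lemma \ref{bfgdhs} does the work: since $\E\in\caD_0$ by hypothesis and $R\E\in\caD_0$ by Lemma \ref{jyhgd}, an inductive application of Lemma \ref{bfgdhs} (tensoring the $l_1$-equivalence $\eta_{\E}$ successively with copies of $\E$, then of $R\E$) shows that $\E^{\otimes n}\to R\E^{\otimes n}$ is an $l_1$-local equivalence. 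Mapping into the $\mathscr{S}$-local, underlying fibrant object $R\E$ then yields the desired weak equivalence of homotopy function complexes.

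With the trivial fibration established, Theorem \ref{coaug_alg} (applied to the single-color case, viewing the cofibrant operad as a $\{\ast\}$-colored operad) endows $R\E$ with an $E_\infty$- resp.\ $A_\infty$-structure, homotopy unique, for which $\eta_{\E}$ is a map of algebras. Since $R\E$ models $l_1\E$, this gives the asserted structure. The main obstacle is the monoidal bookkeeping in the middle step — ensuring that the relevant simplicial mapping spaces really compute homotopy function complexes so that Lemma \ref{bfgdhs} may be combined with $\mathscr{S}$-locality — which is precisely why the setup requires the simplicial enrichment together with the properness and cofibrancy hypotheses on $\caN$.
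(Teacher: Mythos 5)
Your proof matches the paper's own argument essentially step for step: you assume a cofibrant operad and an underlying cofibrant $\E$ via the semi model structure, model $l_1$ by the fibrant replacement $R$ in $L_{\mathscr{S}}\caN$, show the map of mapping spaces in the hypothesis of Theorem~\ref{coaug_alg} is a fibration via pushout-product considerations, and establish it is a weak equivalence by combining Lemma~\ref{jyhgd} (to put $R\E$ into $\caD_0$) with repeated applications of Lemma~\ref{bfgdhs}. The only cosmetic difference is that you cite Proposition~\ref{htgrfd} and Lemma~\ref{jyhdht} to justify the choice of $\mathscr{K}$ and that $R$ models $l_1$, whereas the paper relies directly on the accessibility assumption; this is a correct and equivalent justification.
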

\begin{proof}
Again, we may assume the $E_\infty$- or $A_\infty$-operad is cofibrant. 
In order to apply Theorem \ref{coaug_alg} we let $\mathscr{K}$ be a set of objects of $\caD$ that generates $\caD_1$ by homotopy colimits.
Form the set of maps $\mathscr{S}=\{0 \to K \mid K \in \mathscr{K} \}$ and let $R$ be a fibrant replacement functor in the corresponding Bousfield 
localization $L_\mathscr{S}\caN$ of $\caN$ (see Definition~\ref{Bloc}). 
There is a coaugmentation $\eta \colon \mathrm{Id} \to R$ and $R$ models the functor $l_1$. 
We may also assume that $\E$ is underlying cofibrant by using the semi model structure on $E_\infty$- or $A_\infty$-algebras. 
In order to verify the assumptions in Theorem \ref{coaug_alg}, 
we show that 
\begin{equation}
\Map(R\E \otimes \stackrel{(n)}{\cdots} \otimes R\E,R\E) \longrightarrow \Map(\E \otimes \stackrel{(n)}{\cdots} \otimes \E, R\E)
\label{equ02}
\end{equation}
is a trivial fibration of simplicial sets for every $n\ge 0$. 

By assumption, 
$R\E$ is fibrant in $\caN$, $\E$ is cofibrant and $\E \to R \E$ is a cofibration. 
This shows that (\ref{equ02}) is a fibration for every $n\ge 0$.
Moreover, 
$R \E$ is also cofibrant. 
Thus the simplicial enrichments coincide with the homotopy function complexes.
To see that (\ref{equ02}) is a weak equivalence we show that $\E \otimes \cdots \otimes \E \to R\E \otimes \cdots \otimes R\E$
is an $\mathscr{S}$-equivalence. 
Lemma \ref{jyhgd} implies that $R\E$ is in $\caD_0$.
Applying Lemma \ref{bfgdhs} repeatedly finishes the proof.
\end{proof}

\begin{proof}[Proof of Theorem \ref{hgdfggg}:]
Let $\caM$, $\caC$ and  $\{\caC_i\mid i\in \Z\}$ be as in Section~\ref{hygrf}, 
and let $\caN=\caM^\Z$, $\caD_0=\prod_{i \in \Z} \caC_i$ and $\caD_1=\prod_{i \in \Z} \caC_{i+1}$, 
as in the second part of Section~\ref{graded_slice}.
We denote the corresponding colocalization and localization functors for $\caD_0$ and $\caD_1$ by $c_0'$, $c_1'$, $l_0'$ and $l_1'$. 
The diagonal functor $d \colon \caM \to \caM^\Z = \caN$ is right adjoint to the sum functor $\caN \to \caM$, 
which is a symmetric monoidal left Quillen functor, 
hence it is lax symmetric monoidal.
Thus $d$ preserves $E_\infty$- and $A_\infty$-objects.

Suppose $\E$ is an $E_\infty$- or $A_\infty$-object in $\caM$. 
By Proposition \ref{htgrfdd}, 
$c_0'(d(\E))$ can be modeled as an $E_\infty$- resp.~$A_\infty$-object. 
Applying Proposition \ref{jyssrf} shows that $s_*(\E)=l_1'(c_0'(d(\E)))$ can be modeled as an $E_\infty$- resp.~$A_\infty$-object.
\end{proof}

Let $\caN$, $\caD$ and $\caD_0$ be as in Section~\ref{jytgef}. 
Suppose $\caD_0$ satisfies (B1) and $\E$ is an $A_\infty$-object in $\caN$ (the $E_\infty$-case can be treated similarly).
We assume that $\E\in\caD_0$ and let $\caN'=\Mod(\E)$ be the category of left $\E$-modules.
By \cite[Corollary 2.3.8.(1)]{lurie.DAGII} the category $\Mod(\E)$ is locally presentable. 
Assume also that $\E$ is cofibrant.
Then $\Mod(\E)$ is a cofibrantly generated model category with the transferred model structure.
Denote the corresponding derived category by $\caD'=\D(\E)=\Ho(\caN')$.
Fix a set $\mathscr{K}$ of objects of $\caD_0$ that generates $\caD_0$ by homotopy colimits.
The free modules $\mathscr{K}'=\E \otimes \mathscr{K}$ on $\mathscr{K}$ generate a full subcategory $\caD_0'$ of $\caD'$ by homotopy colimits. 
Denote by $U \colon \caD' \to \caD$ the forgetful functor.

\begin{lemma} \label{bgfgbfg}
Let $X$ be an object of $\caD'$. 
Then $U(X)\in\caD_0$ if and only if $X\in\caD_0'$.
\end{lemma}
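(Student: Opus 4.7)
My plan is to prove the two implications separately. The forward direction is a formal closure argument, while the converse requires coordinating two accessible $t$-structures: on $\caD$ with non-negative part $\caD_0$, and on $\caD'$ with non-negative part $\caD_0'$ (the latter furnished by Proposition~\ref{htgrfd}), linked by the free-forgetful adjunction $(\E\otimes-)\dashv U$.

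For the ``if'' direction, I would show that the full subclass $\mathcal{A}=\{X\in\caD'\mid U(X)\in\caD_0\}$ contains $\mathscr{K}'$ and is closed under homotopy colimits; the inclusion $\caD_0'\subseteq\mathcal{A}$ then follows from the defining property of $\caD_0'$. The containment $\mathscr{K}'\subseteq\mathcal{A}$ is immediate from axiom (B1): for $K\in\mathscr{K}$, $U(\E\otimes K)=\E\otimes K\in\caD_0$ since $\E,K\in\caD_0$. Closure under homotopy colimits rests on the standard fact that $U\colon\Mod(\E)\to\caN$ preserves homotopy colimits (colimits of left $\E$-modules are computed underlyingly) together with closure of $\caD_0$ under homotopy colimits, since $\caD_0$ is the non-negative part of an accessible $t$-structure.

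For the converse, let $c'$, $l'$ denote the colocalization and localization on $\caD'$ from the accessible $t$-structure with non-negative part $\caD_0'$, and let $l$ denote the localization on $\caD$ associated to $\caD_0$. For $X\in\caD'$ with $U(X)\in\caD_0$, I would contemplate the triangle $c'(X)\to X\to l'(X)\to c'(X)[1]$ and aim to show $l'(X)=0$, whence $X\cong c'(X)\in\caD_0'$. The strategy is to establish that $U(l'(X))$ lies simultaneously in $\caD_0$ and among the $l$-local objects, forcing $U(l'(X))=0$; conservativity of $U$ (weak equivalences in $\Mod(\E)$ are detected underlyingly by construction of the transferred model structure) will then give $l'(X)=0$.

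The containment $U(l'(X))\in\caD_0$ follows from applying $U$ to the triangle, rotating to $U(X)\to U(l'(X))\to U(c'(X))[1]\to U(X)[1]$, and invoking closure of $\caD_0$ under extensions: both $U(X)$ (by hypothesis) and $U(c'(X))[1]$ (by the ``if'' direction and closure of $\caD_0$ under suspension) lie in $\caD_0$. The main obstacle, which I expect to carry the real content of the proof, is the second property: that $U$ sends $l'$-local objects to $l$-local objects. By Lemma~\ref{jyhtgd}, $l'(X)$ is local with respect to $\mathscr{S}'=\{0\to\E\otimes K\mid K\in\mathscr{K}\}$; the free-forgetful adjunction then gives
$$
\Map_{\caD'}(\E\otimes K,l'(X))\cong\Map_{\caD}(K,U(l'(X))),
$$
so $U(l'(X))$ is $\mathscr{S}$-local in $\caD$, hence $l$-local by Lemma~\ref{jyhtgd} applied to $\caD$.
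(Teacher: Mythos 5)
Your ``if'' direction is correct and matches the paper's argument: $U$ maps $\mathscr{K}'=\E\otimes\mathscr{K}$ into $\caD_0$ by axiom~(B1), and $U$ commutes with homotopy colimits, so $U(\caD_0')\subseteq\caD_0$. The converse, however, is circular as written. You invoke Proposition~\ref{htgrfd} to equip $\caD'$ with a $t$-structure whose non-negative part is $\caD_0'$, but that proposition requires the subcategory to be generated by homotopy colimits \emph{and extensions}, whereas $\caD_0'$ is defined, just before the lemma, as generated by $\mathscr{K}'$ under homotopy colimits alone. What Proposition~\ref{htgrfd} actually produces is a $t$-structure whose non-negative part is the extension-closure of $\caD_0'$, which might a priori be strictly larger. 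That this closure equals $\caD_0'$ is exactly the Corollary the paper deduces \emph{from} Lemma~\ref{bgfgbfg}, so you cannot assume it in advance. The same difficulty resurfaces in your appeal to the second part of Lemma~\ref{jyhtgd}: concluding that $l'(X)$ is genuinely $\mathscr{S}'$-local (not merely $\mathscr{S}'$-locally equivalent to $X$) carries the explicit hypothesis that the $\mathscr{K}'$-colocal objects --- which are precisely $\caD_0'$ --- are closed under extensions. Concretely, tracing $\Hom(\E\otimes K,-)$ through the triangle $c'(X)\to X\to l'(X)$, the vanishing of $\Hom(\E\otimes K,l'(X))$ hinges on controlling $\Hom(\E\otimes K[-1],c'(X))\to\Hom(\E\otimes K[-1],X)$, which involves the \emph{negative} suspension $\E\otimes K[-1]\notin\caD_0'$; extension-closure is precisely what bridges that gap.

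The paper circumvents this entirely by an argument that does not invoke any $t$-structure on $\caD'$: the bar resolution $X\simeq\hocolim_{[n]\in\Delta^{\mathrm{op}}}\bigl(\E^{\otimes(n+1)}\otimes U(X)\bigr)$. Each term is the free $\E$-module on $\E^{\otimes n}\otimes U(X)$, which lies in $\caD_0$ by (B1) together with the hypothesis $U(X)\in\caD_0$; since $\E\otimes-$ preserves homotopy colimits and carries $\mathscr{K}$ into $\mathscr{K}'$, each term lies in $\caD_0'$, and closure of $\caD_0'$ under homotopy colimits then places the geometric realization $X$ there as well. To salvage your route you would need an independent proof that $\caD_0'$ is closed under extensions, which appears to be no easier than the lemma itself --- so the bar-resolution trick seems unavoidable here.
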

\begin{proof}
Note that $U(\mathscr{K}') \subset \caD_0$ since $\E$ lies in $\caD_0$ which is closed under the tensor product.
This shows the ``if'' part of the equivalence because $\caD_0'$ is generated by homotopy colimits and $U$ commutes with homotopy colimits.

For the ``only if'' part of the equivalence we shall employ a trick with homotopy colimits.
Suppose $X$ is an object in $\caD'$ such that $U(X)\in\caD_0$ and write 
$$
X 
\simeq 
\hocolim_{[n] \in \bigtriangleup^\mathrm{op}} (\E^{\otimes (n+1)} \otimes U(X)).
$$
Every term $\E^{\otimes (n+1)} \otimes U(X)$ belongs to $\caD_0'$ which is closed under homotopy colimits, 
so we may conclude $X\in\caD_0'$.
\end{proof}

\begin{corollary}
The category $\caD_0'$ is closed under extensions in $\caD'$.
\end{corollary}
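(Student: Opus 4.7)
The plan is to deduce the corollary directly from Lemma \ref{bgfgbfg} together with the fact that $\caD_0$ itself is closed under extensions (since, by assumption in Section~\ref{jytgef}, it is the homologically non-negative part of an accessible $t$-structure, and such subcategories are automatically extension-closed).

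First I would take an exact triangle
\begin{equation*}
X \longrightarrow Y \longrightarrow Z \longrightarrow X[1]
\end{equation*}
in $\caD'$ with $X, Z \in \caD_0'$, and argue that the goal $Y \in \caD_0'$ is equivalent, via Lemma \ref{bgfgbfg}, to the statement $U(Y) \in \caD_0$.

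Next I would apply the forgetful functor $U \colon \caD' \to \caD$ to this triangle. Since $\caN' = \Mod(\E)$ carries the transferred model structure, the forgetful functor is right Quillen and commutes with fibers; in the stable setting fibers agree with cofibers, so $U$ is exact (triangulated). Thus we obtain an exact triangle
\begin{equation*}
U(X) \longrightarrow U(Y) \longrightarrow U(Z) \longrightarrow U(X)[1]
\end{equation*}
in $\caD$. By the ``only if'' part of Lemma \ref{bgfgbfg} applied to $X$ and $Z$, both $U(X)$ and $U(Z)$ lie in $\caD_0$. Using that $\caD_0$ is closed under extensions, it follows that $U(Y) \in \caD_0$, and applying the ``if'' part of Lemma \ref{bgfgbfg} gives $Y \in \caD_0'$.

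There is no real obstacle here beyond checking that $U$ is genuinely a triangulated (exact) functor between the stable homotopy categories $\caD'$ and $\caD$; this is standard but worth a brief justification, since it is the only non-formal input beyond Lemma \ref{bgfgbfg} and the extension-closure of $\caD_0$.
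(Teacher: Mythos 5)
Your argument is correct and is precisely the unpacking of the paper's one-line proof ("This follows from Lemma~\ref{bgfgbfg}"): apply the exact forgetful functor $U$ to the extension, use Lemma~\ref{bgfgbfg} to transfer membership in $\caD_0'$ to membership in $\caD_0$ for the outer terms, invoke extension-closure of $\caD_0$ (automatic for the homologically non-negative part of a $t$-structure), and transfer back via the lemma. Same approach, fully spelled out.
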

\begin{proof}
This follows from Lemma \ref{bgfgbfg}.
\end{proof}

\begin{proposition} \label{hgfdsdd}
For every $X$ in $\caD'$ the map $U(c_0' X) \to U(X)$ is a $\mathscr{K}$-colocalization, where
$c_0'$ denotes the right adjoint of the inclusion $\caD_0' \to \caD'$.
\end{proposition}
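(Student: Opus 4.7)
The plan is to verify the two defining properties of a $\mathscr{K}$-colocalization: that $U(c_0'X)$ lies in $\caD_0$, and that the map $U(c_0'X)\to U(X)$ is a $\mathscr{K}$-colocal equivalence.

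First I would handle membership. Since $c_0'$ is right adjoint to the inclusion $\caD_0'\subset\caD'$, the object $c_0'X$ lies in $\caD_0'$ by construction. Lemma \ref{bgfgbfg} then immediately gives $U(c_0'X)\in\caD_0$, so no further work is needed for this part.

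Next I would verify the equivalence property via the derived free-forget adjunction $(\E\otimes-)\dashv U$ between $\caD$ and $\caD'$. For any $K\in\mathscr{K}$ and any $n\geq 0$ (to also cover suspensions of generators, so that we capture all of $\caD_0$'s generators at the level of derived mapping sets), this adjunction yields a natural commutative square
\[
\xymatrix{
\Hom_{\caD}(K[n],U(c_0'X)) \ar[r] \ar[d]_{\cong} & \Hom_{\caD}(K[n],U(X)) \ar[d]^{\cong} \\
\Hom_{\caD'}(\E\otimes K[n], c_0'X) \ar[r] & \Hom_{\caD'}(\E\otimes K[n], X)
}
\]
The objects $\E\otimes K[n]$ belong to $\mathscr{K}'$ (up to suspension) and are therefore $\mathscr{K}'$-colocal. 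Since $c_0'X\to X$ is the $\mathscr{K}'$-colocalization of $X$ in $\caD'$, the bottom horizontal arrow is an isomorphism, hence so is the top arrow. This shows $U(c_0'X)\to U(X)$ is a $\mathscr{K}$-colocal equivalence.

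Combining the two parts, $U(c_0'X)$ is in $\caD_0$ and the canonical map to $U(X)$ is a $\mathscr{K}$-colocal equivalence, which by the universal property characterizes it as a $\mathscr{K}$-colocalization. The only subtle point is the translation of ``$\mathscr{K}$-colocalization'' into the two checkable conditions on derived mapping sets; once that is in place the adjunction argument is immediate, so I do not expect a genuine obstacle here.
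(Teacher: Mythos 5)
Your proof is correct and follows the same strategy as the paper's: membership of $U(c_0'X)$ in $\caD_0$ via Lemma~\ref{bgfgbfg}, and the colocal-equivalence property via the free–forgetful adjunction $\E\otimes - \dashv U$, which sends each generator $K\in\mathscr{K}$ to the generator $\E\otimes K\in\mathscr{K}'$. You have simply spelled out the adjointness argument that the paper leaves implicit.
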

\begin{proof}
The object $U(c_0'X)$ is $\mathscr{K}$-colocal by Lemma \ref{bgfgbfg} and $U$ sends $\mathscr{K}'$-colocal equivalences to $\mathscr{K}$-colocal 
equivalences by adjointness.
\end{proof}

\begin{proposition} \label{hhgfddd}
Suppose $\E$ is an $E_\infty$- or $A_\infty$-object in $\caN$ and $\M$ is a left $\E$-module. 
Then $c_0 \M$ has naturally the structure of a $c_0 \E$-module and $c_0 \M \to \M$ is a map over $c_0 \E \to \E$. 
The assignment $\M \mapsto c_0 \M$ defines a functor $\D(\E) \to \D(c_0 \E)$.
\end{proposition}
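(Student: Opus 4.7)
The plan is to deduce this proposition as a direct generalization of Proposition \ref{htgrfdd}, by treating an algebra together with one of its modules as a single algebra over a two-colored operad and then invoking Theorem \ref{aug_alg}. More precisely, I would fix a two-colored operad $\mathcal{O}_{\mathsf{AM}}$ (with colors $a$ and $m$) whose algebras in $\caN$ are pairs $(\E,\M)$ consisting of an $A_\infty$-algebra $\E$ and a left $\E$-module $\M$, and similarly an $E_\infty$-variant $\mathcal{O}_{\mathsf{CM}}$ in the commutative case. Since both $A_\infty$ and $E_\infty$ are $\Sigma$-cofibrant resolutions, such a combined operad can likewise be chosen cofibrant, so Theorem \ref{aug_alg} applies.

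Next I would apply Theorem \ref{aug_alg} to $\mathcal{M}=\caN^{\{a,m\}}$ with $\mathcal{O}=\mathcal{O}_{\mathsf{AM}}$ (resp.\ $\mathcal{O}_{\mathsf{CM}}$) and with the augmented functor $Q$ taken to be cofibrant replacement in the colocalization $C^{\mathscr{K}}\caN$ on both colors, where $\mathscr{K}$ generates $\caD_0$ by homotopy colimits; thus $J_1 = \{a,m\}$ and $Q$ models $c_0$ coordinatewise. Setting $\mathbf{X}=(\E,\M)$, I first replace $\E$ cofibrantly as an $A_\infty$- or $E_\infty$-algebra and $\M$ cofibrantly as an $\E$-module, and then replace both by underlying fibrant objects. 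The verification of the trivial fibration hypothesis proceeds exactly as in Proposition \ref{htgrfdd}: for each operation profile, the map
\begin{equation*}
\Map\bigl(Q X(c_1)\otimes\cdots\otimes Q X(c_n),\,Q X(c)\bigr)
\longrightarrow
\Map\bigl(Q X(c_1)\otimes\cdots\otimes Q X(c_n),\,X(c)\bigr)
\end{equation*}
is a fibration by the lifting/fibrancy setup, and is a weak equivalence because axioms (B1) and (A5) imply that tensor products of objects of $\caD_0$ remain in $\caD_0$, hence the source is $\mathscr{K}$-colocal, while $QX(c)\to X(c)$ is a $\mathscr{K}$-colocal equivalence. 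Theorem \ref{aug_alg} then endows $c_0\mathbf{X}=(c_0\E,c_0\M)$ with a (homotopy unique) $\mathcal{O}_{\mathsf{AM}}$-algebra structure for which $\varepsilon_{\mathbf{X}}$ is a map of such algebras; unwinding, this says precisely that $c_0\M$ is a $c_0\E$-module and that $c_0\M\to\M$ lies over $c_0\E\to\E$.

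To promote this to a functor $\D(\E)\to\D(c_0\E)$, I would repeat the same operadic argument for the arrow category: a morphism $\M\to\M'$ of $\E$-modules, together with $\E$ itself, is an algebra over a three-colored operad (one algebra color and two module colors linked by a morphism), which is again cofibrant in the relevant sense. Applying Theorem \ref{aug_alg} once more with $Q$ acting as $c_0$ on all three colors produces a coherent $c_0\E$-module map $c_0\M\to c_0\M'$, and the homotopy uniqueness clause shows the assignment respects composition and sends weak equivalences of $\E$-modules to weak equivalences of $c_0\E$-modules, so it descends to homotopy categories. Alternatively, one can argue directly on the $\infty$-category level: the forgetful $\infty$-functor $\mathrm{Mod}(\E)\to\caN$ lifts the colocalization because $c_0$ commutes with the relevant colimits (Lemma \ref{bgfgbfg}), and this lift is the desired functor.

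The principal obstacle is the verification step: checking that the simplicial mapping spaces compute the correct derived mapping spaces and that the tensor powers remain colocal simultaneously in both colors. This requires the combined use of axioms (B1) and (A5) to keep $c_0\E\otimes\cdots\otimes c_0\E\otimes c_0\M$ inside $\caD_0$ and to keep the comparison map into $\E\otimes\cdots\otimes\E\otimes\M$ a $\mathscr{K}$-colocal equivalence, exactly as in Lemma \ref{bfgdhs} and Proposition \ref{htgrfdd}. Once this is in place, the rest of the argument is a direct citation of Theorem \ref{aug_alg} and its homotopy uniqueness clause.
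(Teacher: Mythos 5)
Your primary route differs from the paper's proof, which is much shorter and sidesteps the issues your approach raises. The paper's argument is: first restrict scalars along the $E_\infty$- resp.\ $A_\infty$-ring map $c_0\E\to\E$ furnished by Proposition~\ref{htgrfdd}, giving a functor $\D(\E)\to\D(c_0\E)$; then apply the colocalization functor $c_0'$ on the module category $\D(c_0\E)$ associated to the free modules $\E\otimes\mathscr{K}$, and invoke Proposition~\ref{hgfdsdd} (which rests on Lemma~\ref{bgfgbfg}) to identify the underlying object $U(c_0'\M)$ with $c_0\M$. Both steps are honest functors between categories of strict modules, so the functoriality claim is automatic. This is essentially the ``alternative'' $\infty$-categorical route you sketch in passing at the end of your third paragraph; the paper's actual proof follows that line, not the operadic one.

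Your main operadic route is the strategy the paper uses in Theorem~\ref{thm:coloc_modules}, not here, and if you try to run it for Proposition~\ref{hhgfddd} there are two gaps. First, the cofibrant two-colored operad you need is a $\Sigma$-cofibrant resolution $(\LMod_{\mathcal{O}})_\infty$, and its algebras are pairs in which the module action is also weakened: applying Theorem~\ref{aug_alg} makes $c_0\M$ an $\mathcal{O}_\infty$-module over $c_0\E$ in the sense of Subsection~\ref{inftymaps}, not an object of $\D(c_0\E)=\Ho(\Mod(Q(c_0\E)))$. There is no cofibrant two-colored operad whose algebras pair an $A_\infty$-algebra with a \emph{strict} module, so a rectification step (a Quillen equivalence between weak and strict modules) would be needed to land in $\D(c_0\E)$, and you do not supply it. Second, the three-colored-operad argument for functoriality is incomplete: homotopy uniqueness gives a well-defined homotopy class of $\mathcal{O}_\infty$-module maps for each morphism $\M\to\M'$, but it does not by itself show that composites are respected coherently enough to descend to a genuine functor $\D(\E)\to\D(c_0\E)$. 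The restriction-plus-colocalization argument gives both the strict module structure and the functor on the nose, which is why the paper uses it for this proposition and reserves the operadic argument for the genuinely $E_\infty$-/$A_\infty$-module statements.
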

\begin{proof}
First apply the pullback functor $\D(\E)\to\D(c_0\E)$. 
The required colocalization in $\D(c_0 \E)$ follows from Proposition \ref{hgfdsdd}.
\end{proof}

With reference to the setup with $\caN$, $\caD$ and $\caD_1\subset\caD_0$ of Section~\ref{jytgef} we assume the axioms (B1)--(B2) hold. 
In the following we keep the notations $\caN'$, $\caD'$, $\mathscr{K}$, $\mathscr{K}'$ and $\caD_0'$ from above.
Let $\E$ be an $E_\infty$- or $A_\infty$-object in $\caN$ which lies in $\caD_0$. 
We choose a set $\mathscr{K}_1$ that generates $\caD_1$ by homotopy colimits and define $\mathscr{K}_1'= \E \otimes \mathscr{K}_1$. 
Then $\mathscr{K}_1'$ generates a full subcategory $\caD_1'$ of $\caD'$ by homotopy colimits. 
Proposition \ref{hgfdsdd} shows that the associated colocalization and localization functors $c_0', c_1'$, $l_0'$ and $l_1'$ lie above the 
corresponding functors $c_0, c_1$, $l_0$, $l_1$.
Since we will be working with several ring objects we introduce the notations $\D(\E)_0=\caD_0'$ and $\D(\E)_1=\caD_1'$. 
Let $\caQ_\E$ be the core of the pair $(\D(\E)_0,\D(\E)_1)$. 
Finally, 
we set $l_1^\E=l_1'$ (we use the same notations for non-cofibrant $\E$ by an implicit cofibrant replacement).

\begin{lemma}
Suppose $\sfA \to \sfB$ is a map of $E_\infty$- or $A_\infty$-objects in $\caN$ and $\sfA$, $\sfB\in \caD_0$.
Then the adjunction given by extension and restriction
$$
\xymatrix{
\D(\sfA) \ar@<4pt>[r] & \ar@<1pt>[l] \D(\sfB)
}
$$
induces adjunctions:
$$
\xymatrix{
\D(\sfA)_0 \ar@<4pt>[r] & \ar@<1pt>[l] \D(\sfB)_0
}
\mbox{ and }
\xymatrix{
\caQ_{\sfA} \ar@<4pt>[r] & \ar@<1pt>[l] \caQ_{\sfB}
}
$$
\end{lemma}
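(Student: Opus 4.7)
The plan is to leverage Lemma \ref{bgfgbfg} to reduce both assertions to statements about underlying objects in $\caD_0$ and in the core $\caQ$ of $(\caD_0,\caD_1)$. Specifically, Lemma \ref{bgfgbfg} characterizes $\D(\sfA)_0$ and $\D(\sfB)_0$ as the preimages of $\caD_0$ under the respective forgetful functors $U_\sfA$ and $U_\sfB$ to $\caD$. Since restriction of scalars along $\sfA\to\sfB$ satisfies $U_\sfA\circ\mathrm{res}=U_\sfB$, it immediately sends $\D(\sfB)_0$ into $\D(\sfA)_0$. For extension of scalars I would write
\begin{equation*}
\sfB\otimes_\sfA X
\simeq
\hocolim_{[n]\in\bigtriangleup^\mathrm{op}}\bigl(\sfB\otimes\sfA^{\otimes n}\otimes X\bigr)
\end{equation*}
via the two-sided bar construction, apply $U_\sfB$, and invoke closure of $\caD_0$ under the tensor product (axiom (B1)) and under homotopy colimits to conclude $\sfB\otimes_\sfA X\in\D(\sfB)_0$ whenever $X\in\D(\sfA)_0$. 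Hence the unit and counit of the base adjunction restrict to $\D(-)_0$, yielding the first induced adjunction.

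For the second adjunction I would first identify $\caQ_\sfA=U_\sfA^{-1}(\caQ)\cap\D(\sfA)_0$, and analogously for $\sfB$. This follows by adjunction: $X$ is $l_1^\sfA$-local iff $\Hom_{\D(\sfA)}(\sfA\otimes K,X)=0$ for every $K\in\mathscr{K}_1$, which unwinds to $\Hom_\caD(K,U_\sfA(X))=0$, i.e.\ $U_\sfA(X)$ is $l_1$-local in $\caD$. With this description, restriction again sends $\caQ_\sfB$ into $\caQ_\sfA$ and provides the right adjoint. The left adjoint is the composite
\begin{equation*}
\caQ_\sfA
\hookrightarrow
\D(\sfA)_0
\xrightarrow{\;\sfB\otimes_\sfA -\;}
\D(\sfB)_0
\xrightarrow{\;L_\sfB\;}
\caQ_\sfB,
\end{equation*}
where $L_\sfB$ reflects $\D(\sfB)_0$ onto $\caQ_\sfB$ (Corollary \ref{kjuyse}). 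The adjunction isomorphism follows from the chain
\begin{equation*}
\Hom_{\caQ_\sfB}\bigl(L_\sfB(\sfB\otimes_\sfA X),Y\bigr)
\cong
\Hom_{\D(\sfB)}(\sfB\otimes_\sfA X,Y)
\cong
\Hom_{\D(\sfA)}(X,\mathrm{res}(Y))
\cong
\Hom_{\caQ_\sfA}(X,\mathrm{res}(Y))
\end{equation*}
for $X\in\caQ_\sfA$ and $Y\in\caQ_\sfB$, combining the two fully faithful inclusions with the base adjunction.

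The main technical point is the bar-resolution identification of $\sfB\otimes_\sfA X$ and its compatibility with the forgetful functors at the homotopy-category level; this is standard for the semi model structures on modules reviewed in Appendix \ref{jnhgd}, so once Lemma \ref{bgfgbfg} is in place the rest of the argument is essentially formal.
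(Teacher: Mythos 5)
Your proposal is correct and follows essentially the same route as the paper: the first part rests on Lemma~\ref{bgfgbfg} (which identifies $\D(-)_0$ as the preimage of $\caD_0$ under the forgetful functor), and the second part rests on the observation that $l_1$-locality for modules is detected on underlying objects, so restriction carries $\caQ_\sfB$ into $\caQ_\sfA$. The only tactical difference is that you verify preservation of $\D(-)_0$ under extension of scalars via a two-sided bar resolution, whereas the slicker (implicit) argument is simply that extension preserves homotopy colimits and sends the generators $\sfA\otimes\mathscr{K}$ of $\D(\sfA)_0$ to the generators $\sfB\otimes\mathscr{K}$ of $\D(\sfB)_0$; both are valid, and you also spell out the adjunction isomorphism for the cores which the paper leaves tacit.
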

\begin{proof}
The first part follows from Lemma \ref{bgfgbfg}.
For the second part we consider the composite adjunction:
$$
\xymatrix{
\D(\sfA)_0 \ar@<4pt>[r] & \ar@<1pt>[l] \D(\sfB)_0 \ar@<4pt>[r] & \ar@<1pt>[l] \caQ_{\sfB}
}
$$
The right adjoint of this adjunction factors through $\caQ_{\sfA} \subset \D(\sfA)_0$ because the local objects in the module
categories are the underlying local objects. 
This finishes the proof.
\end{proof}

\begin{lemma} \label{kkyhgf}
Let $\sfA \to \sfB$ be a map of $E_\infty$- or $A_\infty$-objects in $\caN$ which is an $l_1$\nobreakdash-equivalence. 
Suppose $\sfA$ and $\sfB$ are in $\caD_0$.
Then the induced functor $\caQ_{\sfA} \to \caQ_{\sfB}$ is an equivalence.
\end{lemma}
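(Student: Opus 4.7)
The plan is to show that the adjunction $\caQ_{\sfA} \rightleftarrows \caQ_{\sfB}$ supplied by the preceding lemma is an equivalence by proving that both its unit and its counit are isomorphisms. Writing $L = \sfB \otimes_{\sfA} (-)$ for extension of scalars and $U$ for restriction, the unit at an object $M \in \caQ_{\sfA}$ is represented in $\caD$ by $M \to \sfB \otimes_{\sfA} M$, and the counit at $N \in \caQ_{\sfB}$ by $\sfB \otimes_{\sfA} U(N) \to N$. Since $\caQ_{\sfA}$ (resp.~$\caQ_{\sfB}$) is the localization of $\D(\sfA)_0$ (resp.~$\D(\sfB)_0$) at $l_1$-equivalences by the corollary to Lemma \ref{jyhgd}, it suffices to show that these two maps are $l_1$-equivalences in $\caD$.

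For the unit, I would use the two-sided bar resolution $\sfB \otimes_{\sfA} M \simeq \lvert B_{\bullet}(\sfB, \sfA, M) \rvert$, where $B_n(\sfB, \sfA, M) = \sfB \otimes \sfA^{\otimes n} \otimes M$, together with the analogous resolution $M \simeq \lvert B_{\bullet}(\sfA, \sfA, M) \rvert$. The unit map is then the geometric realization of the simplicial map whose $n$-th level is obtained by tensoring $\sfA \to \sfB$ with $\sfA^{\otimes n} \otimes M$; since $\sfA, M \in \caD_0$ and $\caD_0$ is closed under the tensor product by (B1), Lemma \ref{bfgdhs} shows that each such level is an $l_1$-equivalence. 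The $l_1$-acyclics form the subcategory $\caD_1$, which is closed under homotopy colimits, so the class of $l_1$-equivalences is closed under homotopy colimits as well, and the realization is an $l_1$-equivalence. The counit argument is symmetric: present $\sfB \otimes_{\sfA} U(N) \simeq \lvert B_{\bullet}(\sfB, \sfA, U(N)) \rvert$ and $N \simeq \sfB \otimes_{\sfB} N \simeq \lvert B_{\bullet}(\sfB, \sfB, U(N)) \rvert$, so the counit is the realization of the map with $n$-th level $\sfB \otimes \sfA^{\otimes n} \otimes U(N) \to \sfB \otimes \sfB^{\otimes n} \otimes U(N)$; by iterating Lemma \ref{bfgdhs} this is again an $l_1$-equivalence termwise, hence on realizations.

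The main obstacle will be justifying the bar-construction identifications rigorously in the present model-categorical setup -- specifically, verifying that the simplicial realizations really compute the relative tensor products $\sfB \otimes_{\sfA} M$ and $\sfB \otimes_{\sfA} U(N)$, and arranging sufficient cofibrancy (via the semi-model structures on $E_{\infty}$- and $A_{\infty}$-algebras recalled from Appendix~\ref{jnhgd}) so that each level of the bar construction is homotopically meaningful and lies in $\caD_0$, which is needed to invoke Lemma \ref{bfgdhs}. In the $A_{\infty}$ case the two-sided bar construction is classical; in the $E_{\infty}$ case one can either work after passing to the underlying $A_{\infty}$-algebra, or equivalently argue via the $\infty$-categorical description of the relative tensor product, as is done elsewhere in the paper.
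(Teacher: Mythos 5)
Your proposal is correct, but it takes a noticeably heavier route than the paper's. Rather than unwinding the relative tensor product via the two-sided bar construction, the paper exploits that $\D(\sfA)_0$ is generated under homotopy colimits by the free modules $\mathscr{K}' = \sfA\otimes\mathscr{K}$ (with $\mathscr{K}\subset\caD_0$): since extension of scalars preserves homotopy colimits and $l_1$-equivalences are closed under homotopy colimits (the cofiber lies in $\caD_1$, which is closed under them), it suffices to check the unit map $\M\to\M\otimes_{\sfA}\sfB$ on free modules $\M=\M'\otimes\sfA$ with $\M'\in\caD_0$, where it becomes $\M'\otimes\sfA\to\M'\otimes\sfB$ and Lemma~\ref{bfgdhs} applies at once. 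Moreover the paper only establishes the unit statement and treats the counit as a formal consequence: for $N\in\caQ_{\sfB}$, the composite $U(N)\to U(\sfB\otimes_{\sfA}U(N))\to U(N)$ is the identity by the triangle identity, the first arrow is the unit at $U(N)\in\D(\sfA)_0$ and hence an $l_1$-equivalence, and two-out-of-three then forces the second (i.e., the underlying counit) to be one as well. Your approach argues for the counit separately by a second bar resolution. Both routes are sound, but the paper's avoids the need to justify that the simplicial bar object computes the relative tensor product in the semi-model-categorical setting and sidesteps the cofibrancy bookkeeping you flag as the main obstacle; your approach has the modest advantage of being uniform and not invoking a generating set for $\D(\sfA)_0$. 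If you pursue your route, do verify that the bar construction is levelwise cofibrant enough that its realization is a homotopy colimit (which is the one nontrivial point your sketch defers).
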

\begin{proof}
To prove that the unit and counit of the adjunction 
$$
\xymatrix{
\caQ_{\sfA} \ar@<4pt>[r] & \ar@<1pt>[l] \caQ_{\sfB}
}
$$
are isomorphisms it suffices to show there is a 
naturally induced $l_1$-equivalence $\M\to\M\otimes_{\sfA}\sfB$ for every $\M \in \D(\sfA)_0$.
Recall that $\caD_0 \otimes \sfA$ generates $\D(\sfA)_0$ by homotopy colimits.
Thus we may assume $\M=\M'\otimes\sfA$, where $\M' \in \caD_0$.
In this case the map in question is $\M'\otimes\sfA\to\M'\otimes\sfB$,  
which is an $l_1$-equivalence by Lemma \ref{bfgdhs}.
\end{proof}

\begin{proposition} \label{dddghhh}
Suppose $\E$ is an $E_\infty$- or $A_\infty$-object in $\caN$ which lies in $\caD_0$.
Then the localization functor $l_1^\E \colon \D(\E)_0 \to \D(\E)_0$ factors through $\D(l_1\E)_0$. 
More precisely, 
there is a $2$-commutative diagram:
$$
\xymatrix{
\D(\E)_0 \ar[r]^{l_1^E} \ar@{.>}[rd] & \D(\E)_0 \\
 & \D(l_1\E)_0 \ar[u]_U }
$$
\end{proposition}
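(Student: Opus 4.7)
The strategy is to construct the factoring functor by extending scalars along $\E \to l_1\E$ and then localizing within the category of $l_1\E$-modules, and to identify the forgotten composite with $l_1^\E$ via the universal property of $l_1^\E$-localization. By Proposition~\ref{jyssrf}, $l_1\E$ carries a natural $E_\infty$- or $A_\infty$-structure making $\E \to l_1\E$ a map of such objects, so derived extension of scalars furnishes a left adjoint $-\otimes_\E l_1\E \colon \D(\E) \to \D(l_1\E)$ to $U$. I would define
\[
F(\M) \;=\; l_1^{l_1\E}(\M \otimes_\E l_1\E).
\]
For $\M \in \D(\E)_0$, Lemma~\ref{bgfgbfg} writes $\M$ as a homotopy colimit of free modules $\E \otimes K$ with $K \in \caD_0$, so $\M \otimes_\E l_1\E$ is the corresponding homotopy colimit of objects $l_1\E \otimes K$ in $\D(l_1\E)_0$; the localization $l_1^{l_1\E}$ preserves $\D(l_1\E)_0$, so $F$ lands there as required.

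To verify the 2-commutativity, I would contemplate the natural map in $\D(\E)$
\[
\M \;\longrightarrow\; \M \otimes_\E l_1\E \;\longrightarrow\; l_1^{l_1\E}(\M \otimes_\E l_1\E) \;=\; U(F(\M)),
\]
assembled from the unit of extension/restriction and the $l_1^{l_1\E}$-localization, and show that it is an $l_1^\E$-localization of $\M$. Proposition~\ref{hgfdsdd} together with (the analogue for $\D(\E)_1$ of) Lemma~\ref{bgfgbfg} identifies $l_1^\E$-equivalences with maps whose underlying cofibre lies in $\caD_1$, and likewise for $l_1^{l_1\E}$; consequently the second arrow above has underlying cofibre in $\caD_1$ and is therefore an $l_1$-equivalence in $\caD$. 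For the first arrow the cofibre in $\D(\E)$ is $\M \otimes_\E c_1\E$, which the two-sided bar construction presents as a homotopy colimit of objects $\M \otimes \E^{\otimes n} \otimes c_1\E$, each of which lies in $\caD_1$ by the tensor-ideal axiom (B2) applied to the factor $c_1\E \in \caD_1$; closure of $\caD_1$ under homotopy colimits then places the cofibre in $\caD_1$.

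Finally, $U(F(\M))$ already lies in the core $\caQ_\E \subset \D(\E)_0$, since $F(\M) \in \caQ_{l_1\E}$ by construction has underlying object simultaneously effective and $l_1$-local in $\caD$. Combined with the previous step, the composite displayed above is an $l_1^\E$-equivalence with $l_1^\E$-local target, so the universal property of $l_1^\E$-localization supplies a canonical natural isomorphism $l_1^\E \M \cong U(F(\M))$ and hence the desired 2-commutative triangle. The main obstacle is the bar-construction computation identifying $\M \otimes_\E c_1\E$ as living in $\caD_1$, where the relative tensor product must be resolved level-wise into ordinary tensor products to invoke the tensor-ideal axiom and closure of $\caD_1$ under homotopy colimits; the remaining ingredients are formal consequences of Proposition~\ref{jyssrf}, Lemma~\ref{bgfgbfg}, Proposition~\ref{hgfdsdd}, and the universal property of localization.
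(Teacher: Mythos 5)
Your proof is correct and in fact constructs the same factoring functor $F(\M) = l_1^{l_1\E}(\M \otimes_\E l_1\E)$ that underlies the paper's argument, but you verify the $2$-commutativity by a more hands-on route. The paper's proof is shorter: it appeals to Lemma~\ref{kkyhgf} (that $\caQ_{\sfA} \to \caQ_{\sfB}$ is an equivalence whenever $\sfA \to \sfB$ is an $l_1$-equivalence of rings in $\caD_0$), already established, and observes that in the evident $2$-commutative square the left vertical arrow $\caQ_{l_1\E} \to \caQ_\E$ is an equivalence, so that $l_1^\E = \big(\D(\E)_0 \to \caQ_\E \hookrightarrow \D(\E)_0\big)$ factors as $\D(\E)_0 \to \caQ_\E \overset{\simeq}{\longleftarrow} \caQ_{l_1\E} \hookrightarrow \D(l_1\E)_0 \overset{U}{\longrightarrow} \D(\E)_0$. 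You instead verify the universal property of $l_1^\E$-localization directly for the natural map $\M \to U(F(\M))$. The essential content of Lemma~\ref{kkyhgf} --- that $\M \to \M \otimes_\E l_1\E$ is an $l_1$-equivalence for $\M \in \D(\E)_0$ --- you rederive via the bar resolution of $\M \otimes_\E c_1\E$, invoking the tensor-ideal axiom (B2) termwise and closure of $\caD_1$ under homotopy colimits; the paper proves the same statement by reducing to free modules $\M = \M'\otimes\E$ with $\M' \in \caD_0$ and applying Lemma~\ref{bfgdhs}. Both work; the paper's route reuses existing lemmas and is slicker, while yours is more self-contained. One small slip: the cofiber of $\M \to \M \otimes_\E l_1\E$ is $(\M \otimes_\E c_1\E)[1]$, not $\M \otimes_\E c_1\E$ (the latter is the fiber), and the correct criterion for an $l_1$-equivalence is that the \emph{fiber} lie in $\caD_1$; since the object you actually compute is the fiber and $\caD_1$ is closed under suspension, the conclusion is unaffected.
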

\begin{proof}
There is an evident $2$-commutative diagram:
$$
\xymatrix{
\caQ_\E \ar@{^{(}->}[r] & \D(\E)_0 \\
\caQ_{l_1\E} \ar[u] \ar@{^{(}->}[r] & \D(l_1\E)_0 \ar[u]_U }
$$
The left vertical map is an equivalence by Lemma \ref{kkyhgf}.
Hence the diagram
$$
\xymatrix{
\D(\E)_0 \ar[r] & \caQ_\E \ar@{^{(}->}[r] \ar[d]^\simeq & \D(\E)_0 \\
& \caQ_{l_1\E} \ar@{^{(}->}[r] & \D(l_1\E)_0 \ar[u]_U }
$$
commutes as well, 
and the upper horizontal composite map is $l_1^\E$. 
This shows the claim.
\end{proof}

\begin{proof}[Proof of Theorem \ref{jyhtgdd}\,{\rm (i)}:] 
Let $\E$ be an $E_\infty$- or $A_\infty$-object in $\caM$.
We adopt the same notation as in the proof of Theorem \ref{hgdfggg}. 
There is an induced functor $\D(\E) \to \D(d(\E))$.
Proposition \ref{hhgfddd} implies the $c_0'$-colocalization can be modeled on the level of $\E$-modules by a functor
$$
c_0^{d(\E)} \colon \D(d(\E)) \longrightarrow \D(c_0' d(\E))_0.
$$
By Proposition \ref{dddghhh}, 
the $l_1'$-localization acquires a factorization on the level of modules 
$$
l_1^{c_0' d(\E)} \colon  \D(c_0' d(\E))_0 \longrightarrow \D(l_1' c_0' d(\E))_0.
$$
Since $l_1' c_0' d(\E) = s_* \E$ the composite of $c_0^{d(\E)}$ and $l_1^{c_0' d(\E)}$ yields the desired functor
$$
\D(\E) \longrightarrow \D(s_* \E).
$$
This proves the second part of the claim.
In particular, 
for an $\E$-module $\M$ the full slice $s_* \M$ has the structure of an $s_*\E$-module,
showing also the first part of the claim.
\end{proof}

Our proof of the second part of Theorem~\ref{jyhtgdd} employs symmetric monoidal structures on module categories. 
It will be convenient to consider a particular $E_\infty$ operad, 
namely the linear isometries operad \cite[\S I.3]{EKMM}.
From now on $E_\infty$ denotes the image of the linear isometries operad in simplicial sets. 
If $\E$ is an  $E_\infty$-algebra then $\Mod(\E)$ is a symmetric monoidal model category with a weak unit,
see \cite[\S 9]{Spi}. 
In particular, 
$\D(\E)$ is a closed symmetric monoidal category.
Recall the standing assumptions: 
$\D(\E)_0 \subset \D(\E)$ is a symmetric monoidal subcategory, 
and $\D(\E)_1 \subset \D(\E)_0$ is a tensor ideal.

\begin{lemma}
The core $\caQ_\E$ has an induced symmetric monoidal structure and $\D(\E)_0 \to \caQ_\E$ is a symmetric monoidal functor.
\end{lemma}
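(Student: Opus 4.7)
The plan is to verify that the pair $(\D(\E)_0, \D(\E)_1)$ satisfies the compatibility axioms (B1)--(B2) of Section~\ref{jytgef} inside the closed symmetric monoidal category $\D(\E)$, and then to invoke Corollary~\ref{kjuyse} applied to this pair.

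For (B1), the unit of the symmetric monoidal structure on $\D(\E)$ is represented by $\E$ itself, which belongs to $\D(\E)_0$ because $\E \in \mathscr{K}'$. To see closure of $\D(\E)_0$ under $\otimes_{\E}$, I would argue via Lemma~\ref{bgfgbfg}: for $X, Y \in \D(\E)_0$ it suffices to show $U(X \otimes_{\E} Y) \in \caD_0$. The two-sided bar construction presents $X \otimes_{\E} Y$ as a homotopy colimit of the simplicial object $[n] \mapsto X \otimes \E^{\otimes n} \otimes Y$; applying $U$ and using $U(\E) \in \caD_0$, each term lies in $\caD_0$ by (B1) for the original pair, and $\caD_0$ is closed under homotopy colimits.

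For (B2) the reasoning runs along the same lines. First, the analogue of Lemma~\ref{bgfgbfg} for $\caD_1$ in place of $\caD_0$ is available because $U(\mathscr{K}_1') = U(\E) \otimes U(\mathscr{K}_1) \subset \caD_1$ by (B2) for the original pair. Then, if $X \in \D(\E)_0$ and $Y \in \D(\E)_1$, the same bar presentation exhibits $U(X \otimes_{\E} Y)$ as a homotopy colimit of objects $U(X) \otimes U(\E)^{\otimes n} \otimes U(Y)$, each of which lies in $\caD_1$ by (B2) for the original pair. Closure of $\caD_1$ under homotopy colimits yields $X \otimes_{\E} Y \in \D(\E)_1$.

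With (B1)--(B2) secured for $(\D(\E)_0, \D(\E)_1)$, Corollary~\ref{kjuyse} directly delivers the symmetric monoidal structure on $\caQ_{\E}$ together with the symmetric monoidality of the localization $\D(\E)_0 \to \caQ_{\E}$. The main obstacle will be justifying the bar-resolution step at a sufficient level of rigour: one must verify that the two-sided bar construction genuinely models the derived tensor product $\otimes_{\E}$ at the level of homotopy categories, which relies on the symmetric monoidal model structure on $\Mod(\E)$ from \cite[\S 9]{Spi} and on the fact that $\E$ is only a weak unit rather than a strict one.
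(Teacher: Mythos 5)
Your proof takes essentially the same route as the paper's one-line argument: the paper cites ``the $\E$-analogue of Lemma~\ref{bfgdhs}'' (hence of Corollary~\ref{kjuyse}), and that analogue in turn rests precisely on the (B1)--(B2) axioms for the pair $(\D(\E)_0,\D(\E)_1)$, which the paper asserts as ``standing assumptions'' but does not verify. Your explicit verification via the bar resolution is a sensible way to supply that missing detail.

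Two remarks. First, a small slip: $\E$ need not belong to $\mathscr{K}'=\E\otimes\mathscr{K}$, since nothing forces $\unit\in\mathscr{K}$. The conclusion $\E\in\D(\E)_0$ still holds, either by Lemma~\ref{bgfgbfg} (because $U(\E)=\E\in\caD_0$) or because $\unit\in\caD_0$ is a homotopy colimit of objects of $\mathscr{K}$ and hence $\E\simeq\E\otimes\unit$ is a homotopy colimit of objects of $\mathscr{K}'$. Second, the bar-resolution step and the attendant rigour worry you flagged at the end can be avoided entirely: $\otimes_\E$ commutes with homotopy colimits in each variable, so it suffices to check closure on free modules. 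For $K_1,K_2\in\mathscr{K}$ one has $(\E\otimes K_1)\otimes_\E(\E\otimes K_2)\simeq\E\otimes(K_1\otimes K_2)$, whose underlying object lies in $\caD_0$ by (B1) for the original pair, whence $\E\otimes(K_1\otimes K_2)\in\D(\E)_0$ by Lemma~\ref{bgfgbfg}; the argument for (B2) on free modules $\E\otimes K_1$ with $K_1\in\mathscr{K}$ and $\E\otimes K_1'$ with $K_1'\in\mathscr{K}_1$ is identical, using that $\caD_1$ is a tensor ideal in $\caD_0$.
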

\begin{proof}
This follows from the $\E$-analogue of Lemma \ref{bfgdhs}.
\end{proof}

\begin{proof}[Proof of Theorem \ref{jyhtgdd}\,{\rm (ii)}:]
The claim for $\D(\E)\longrightarrow\D(s_* \E)$ follows because $c_0^{d(\E)}$ and $l_1^{c_0' d(\E)}$ are 
lax symmetric monoidal functors, cf.~the proof of Theorem \ref{jyhtgdd}(i).

The second part can be proven as follows:
For an $\E$-module $\M$ the natural transformation (\ref{motivic_transf}) is an isomorphism if $\sfN$ is a motivic $\E$-sphere. 
By exactness of both functors involved this extends to the localizing subcategory generated by motivic $\E$-spheres.
\end{proof}

Our next objective is to show that the colocalization functors preserve rings and modules over effective rings, 
and similarly that the localization functors preserve effective rings and modules over effective rings. 
In what follows we employ $\Sigma$-cofibrant resolutions of the colored operads $\LMod_{\mathcal{O}}$ for left $\mathcal{O}$-modules 
and $\Mor_{\mathcal{O}}$ for maps of $\mathcal{O}$-algebras;  
we refer to Subsection~\ref{examples_operads} for further details.
Throughout we adopt the setup of Section~\ref{hygrf}.

\begin{theorem} \label{jhdsdd}
Suppose $\E$ is an $E_\infty$- resp.\ $A_\infty$-object in $\caM$.
Then $c_0(\E)$ has a (homotopy unique) $E_\infty$- resp.\ $A_\infty$-structure such that $c_0 \E \to \E$ is a map of 
$E_\infty$-\nobreakdash resp.\ $A_\infty$-objects.
\end{theorem}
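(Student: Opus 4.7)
The plan is to invoke Theorem \ref{aug_alg} applied to the two-colored operad $\Mor_{\mathcal{O}}$ introduced in Subsection~\ref{examples_operads}, whose algebras encode pairs of $\mathcal{O}$-algebras equipped with a morphism between them. I would take $\mathcal{O}$ to be a $\Sigma$-cofibrant resolution of the commutative (resp.\ associative) operad, view $\mathbf{X}=(\E,\E,\mathrm{id}_\E)$ as a $\Mor_{\mathcal{O}}$-algebra in $\caM^{\{a,b\}}$, and apply Theorem \ref{aug_alg} with the augmented functor whose $a$-component is a cofibrant replacement $Q$ in the colocalized model structure $C^{\mathscr{K}}\caM$ (for $\mathscr{K}$ a set of homotopy colimit generators of $\caC_0$, so that $Q$ models $c_0$) and whose $b$-component is the identity with identity augmentation. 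Then $Q\mathbf{X}=(c_0\E,\E)$ with augmentation $(c_0\E\to\E,\mathrm{id}_\E)$ would, by the conclusion of Theorem \ref{aug_alg}, acquire a homotopy unique $\Mor_{\mathcal{O}}$-algebra structure making the augmentation a morphism of $\Mor_{\mathcal{O}}$-algebras; unwinding, this is exactly the desired $\mathcal{O}$-algebra structure on $c_0\E$ together with a strict algebra map $c_0\E\to\E$.

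To verify the hypothesis of Theorem \ref{aug_alg}, I would check the trivial fibration
\[
\End_{\Mor_{\mathcal{O}}}(Q\mathbf{X})\longrightarrow \Hom_{\Mor_{\mathcal{O}}}(Q\mathbf{X},\mathbf{X})
\]
profile by profile on the colors of $\Mor_{\mathcal{O}}$. Since the $b$-component of the augmented functor is the identity, in every profile ending at $b$ and in every profile whose input list contains a $b$, the comparison map is already the identity; the essential remaining profile is $(a,\dots,a;a)$, where the map reduces to
\[
\Map(c_0\E^{\otimes n},c_0\E)\longrightarrow \Map(c_0\E^{\otimes n},\E).
\]
This can be handled exactly as in the proof of Proposition \ref{htgrfdd}: after replacing $\E$ underlying fibrantly in the semi-model structure on $\mathcal{O}$-algebras and taking $c_0\E$ to be an underlying cofibrant replacement in $C^{\mathscr{K}}\caM$, the map $c_0\E\to\E$ is a fibration between cofibrant-fibrant objects, so the displayed map is a fibration of simplicial sets; axioms (A4) and (A5) ensure that $c_0\E^{\otimes n}\in\caC_0$ and is therefore $\mathscr{K}$-colocal, and since $c_0\E\to\E$ is a $\mathscr{K}$-colocal equivalence the displayed map is also a weak equivalence.

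The main obstacle is one of bookkeeping rather than substance: one must ensure that $\Mor_{\mathcal{O}}$ (or a $\Sigma$-cofibrant replacement of it) is available as the cofibrant colored operad required by Theorem \ref{aug_alg}, and that the successive semi-model structure replacements on $\E$ compose compatibly with the colocalized model structure on $\caM$ so that the trivial fibration argument above applies without forcing any auxiliary zig-zag that would degrade the conclusion back to the form of Proposition \ref{htgrfdd}. Homotopy uniqueness of the promoted $\mathcal{O}$-algebra structure on $c_0\E$ and of the strict algebra map $c_0\E\to\E$ is then furnished directly by the uniqueness clause in Theorem \ref{aug_alg}.
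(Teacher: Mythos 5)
Your proposal is mathematically sound, but it takes a longer route than the paper and adds an unnecessary layer. The paper's proof of Theorem~\ref{jhdsdd} is simply the specialization of Proposition~\ref{htgrfdd} to $\caN=\caM$, $\caD=\caC$, $\caD_0=\caC_0$: one applies Theorem~\ref{aug_alg} directly to the one-colored cofibrant $E_\infty$- or $A_\infty$-operad $\mathcal{O}$, with the single augmented functor $Q$ (a cofibrant replacement in $C^{\mathscr{K}}\caM$ modeling $c_0$) and $J_1=C=\{\ast\}$. The key observation you are missing is that the conclusion of Theorem~\ref{aug_alg} \emph{already} states that $\varepsilon_{\mathbf{X}}$ is a map of $\mathcal{O}$-algebras --- the lift $\mathcal{O}\to\End_{\mathcal{O}}(\varepsilon_{\mathbf{X}})$ produced in its proof encodes the morphism structure on $Q\E\to\E$ without any further work. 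You therefore do not need $\Mor_{\mathcal{O}}$ to make $c_0\E\to\E$ an algebra map.

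Introducing $\Mor_{\mathcal{O}}$ here is not wrong, but it has two costs. First, Theorem~\ref{aug_alg} requires a \emph{cofibrant} colored operad, and $\Mor_{\mathcal{O}}$ need not be cofibrant even when $\mathcal{O}$ is; one must pass to a cofibrant replacement $(\Mor_{\mathcal{O}})_{\infty}$ as in Subsection~\ref{inftymaps}. But then the output is only an $\mathcal{O}_\infty$-map (a homotopy class of maps together with coherence data), not the strict $\mathcal{O}$-algebra morphism $Q\E\to\E$ that the one-colored argument hands you for free. Second, the verification of the trivial-fibration hypothesis becomes a profile-by-profile bookkeeping exercise across colors, whereas in the one-colored case it is exactly the single chain of maps in~(\ref{equ01}). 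The $\Mor_{\mathcal{O}}$-device is genuinely needed in this paper only where the two components of the pair are acted on by \emph{different} (co)augmented functors --- as in the ``Moreover'' clause of Theorem~\ref{thm:coloc_modules}, where one colocalizes at level $q'$ in one color and at level $q$ in the other, or in Theorem~\ref{eeerhgh}, where one starts from a nontrivial map $\E_1\to\E_2$. For the present statement, with the identity on $\E$ as your input and the same $Q$ applied in the only relevant color, the colored-morphism operad gives you nothing that Theorem~\ref{aug_alg} in its one-colored application has not already supplied.
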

\begin{proof}
Apply Proposition \ref{htgrfdd} to $\caN=\caM$, $\caD=\caC$ and $\caD_0 = \caC_0$.
\end{proof}

\begin{theorem}
Suppose $\E$ is an effective $E_\infty$- resp.\ $A_\infty$-object in $\caM$ and $\M$ is an $E_{\infty}$- resp.\ $A_{\infty}$-module over $\E$. 
Then $c_q \M$ has a (homotopy unique) $E_{\infty}$- resp.\ $A_{\infty}$-module structure over $\E$ such that $c_q \M \to \M$ is a map of 
$E_{\infty}$- resp.\ $A_{\infty}$-modules. 
Moreover, for $q' \ge q$,  
$c_{q'} \M \to c_q \M$ is an $E_{\infty}$- resp.\ $A_{\infty}$-map.
\label{thm:coloc_modules}
\end{theorem}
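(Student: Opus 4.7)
The strategy runs parallel to Proposition~\ref{htgrfdd}, with the single-color operad $\mathcal{O}$ replaced by the two-colored operad $\LMod_{\mathcal{O}}$ (with colors $a$ for the algebra and $m$ for the module), taken up to a $\Sigma$-cofibrant resolution as discussed in Subsection~\ref{examples_operads}. The plan is to apply Theorem~\ref{aug_alg} to $\mathbf{X}=(\E,\M)$ with the augmented functor $Q$ equal to the identity on the algebra color and to a cofibrant replacement in $C^{\mathscr{K}(q)}\caM$ on the module color, so that $Q\mathbf{X}=(\E,c_q\M)$ and $\varepsilon_{\mathbf{X}}=(\mathrm{id}_{\E},\,c_q\M\to\M)$. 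This would produce an $\LMod_{\mathcal{O}}$-algebra structure on $(\E,c_q\M)$, exhibiting $c_q\M$ as an $\E$-module and $c_q\M\to\M$ as an $\mathcal{O}$-module map; homotopy uniqueness is the corresponding clause of Theorem~\ref{aug_alg}.

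The hypothesis of Theorem~\ref{aug_alg} demands that
\[
\End_{\LMod_{\mathcal{O}}}(Q\mathbf{X})\longrightarrow\Hom_{\LMod_{\mathcal{O}}}(Q\mathbf{X},\mathbf{X})
\]
be a trivial fibration of $\{a,m\}$-colored collections. It is the identity on $(a,\ldots,a;a)$-entries, so only the $(a,\ldots,a,m;m)$-entries require inspection, namely
\[
\Map(\E^{\wedge n}\wedge c_q\M,\,c_q\M)\longrightarrow\Map(\E^{\wedge n}\wedge c_q\M,\,\M).
\]
I would arrange that $\E$ is underlying cofibrant as an $\mathcal{O}$-algebra, $\M$ is underlying fibrant, and $c_q\M\to\M$ is a trivial fibration in $C^{\mathscr{K}(q)}\caM$, hence an underlying fibration together with a $\mathscr{K}(q)$-colocal equivalence. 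The fibration part of the displayed map then follows from the simplicial structure of $\caM$, while the weak equivalence part follows because effectivity of $\E$ together with axioms (A4) and (A5) places $\E^{\wedge n}\wedge c_q\M$ in $\caC_q$, making it $\mathscr{K}(q)$-colocal so that testing a $\mathscr{K}(q)$-colocal equivalence against it returns a weak equivalence.

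For the transitivity statement, I would reapply exactly the same construction to the pair $(\E,c_q\M)$, now carrying the module structure produced in the first step, with augmented functor $(\mathrm{Id},c_{q'})$. This yields an $\mathcal{O}$-module $c_{q'}(c_q\M)$ together with an $\mathcal{O}$-module map $c_{q'}(c_q\M)\to c_q\M$. Since $\caC_{q'}\subset\caC_q$, every $\mathscr{K}(q)$-colocal equivalence is a $\mathscr{K}(q')$-colocal equivalence, so the composite $c_{q'}(c_q\M)\to c_q\M\to\M$ is a $\mathscr{K}(q')$-colocal equivalence out of a $\mathscr{K}(q')$-colocal object. Uniqueness of colocalization canonically identifies it with $c_{q'}\M\to\M$, and the freshly built module map $c_{q'}(c_q\M)\to c_q\M$ is therefore the sought-for $\mathcal{O}$-module map $c_{q'}\M\to c_q\M$.

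The main delicate point is the bookkeeping of replacements: one must simultaneously secure cofibrancy of $\E$ as an $\mathcal{O}$-algebra, fibrancy of $\M$, and cofibrancy together with the fibration property of $c_q\M$ in $C^{\mathscr{K}(q)}\caM$, so that the enrichments compute derived mapping spaces and Theorem~\ref{aug_alg} applies cleanly. All these are available from the semi-model structure on $\mathcal{O}$-algebras reviewed in Appendix~\ref{jnhgd} together with the colocalized model structure of \cite{Barwick,Hirschhorn}; with these in place the argument matches the template of Propositions~\ref{htgrfdd} and~\ref{jyssrf}.
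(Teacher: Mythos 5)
Your first claim is proved by essentially the same route as the paper: apply Theorem~\ref{aug_alg} to $\mathbf{X}=(\E,\M)$ regarded as an algebra over a $\Sigma$-cofibrant resolution of $\LMod_{\Ass}$ or $\LMod_{\Com}$, with the extension $(\mathrm{Id},Q_q)$ (in the paper's notation, $n=1$, $J_1=\{m\}$, $Q_1=Q_q$), arrange $(\E,\M)$ to be a cofibrant, underlying-fibrant algebra via the semi model structure, and then observe that (A4)--(A5) place $\E^{\otimes n}\otimes Q_q\M$ in $\caC_q$, hence $\mathscr{K}_q$-colocal, so that the required map of mapping spaces is a trivial fibration. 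The paper phrases the argument with the module color in an arbitrary position because it works with the symmetrization of $\LMod_{\mathcal{O}}$; this has no bearing on the argument. Your bookkeeping (cofibrancy of $\E$, fibrancy of $\M$, cofibrancy and fibration property of $Q_q\M\to\M$) is the same content as the paper's ``$(\E,\M)$ cofibrant and underlying fibrant'' reduction.

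For the second claim you take a genuinely different route. The paper produces the $E_\infty$- resp.\ $A_\infty$-map $c_{q'}\M\to c_q\M$ in a single application of Theorem~\ref{aug_alg}: it takes the $4$-colored operad $(\Mor_{\mathcal{O}})_\infty$ for $\mathcal{O}=\LMod_{\Ass}$ or $\LMod_{\Com}$, views the identity on $(\E,\M)$ as a $(\Mor_{\mathcal{O}})_\infty$-algebra, and applies the extension with $Q_{q'}$ on the source module color and $Q_q$ on the target module color. This builds both module structures and the $\mathcal{O}_\infty$-map simultaneously, and the homotopy uniqueness clause applies directly to that joint structure. You instead iterate the one-step construction: first build the module structure on $c_q\M$, then colocalize that $\mathcal{O}$-algebra again to obtain a strict $\mathcal{O}$-module map $c_{q'}(c_q\M)\to c_q\M$, and finally identify $c_{q'}(c_q\M)$ with $c_{q'}\M$ by observing that the composite to $\M$ is a $\mathscr{K}_{q'}$-colocalization (using that $\mathscr{K}_q$-colocal equivalences are $\mathscr{K}_{q'}$-colocal equivalences when $q'\geq q$). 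This is correct and even yields a strict map rather than merely an $\mathcal{O}_\infty$-map, but it requires the extra identification step that the paper's one-shot approach with $\Mor_{\mathcal{O}}$ avoids; in particular, to match the resulting module structure on $c_{q'}(c_q\M)$ with the one furnished by the first claim (applied at level $q'$) you should invoke the homotopy uniqueness clause, since the two constructions produce weakly equivalent but not identical models. You should also note that re-entering Theorem~\ref{aug_alg} at the second stage requires a fresh cofibrant/fibrant replacement of the algebra $(\E,Q_q\M)$ constructed in stage one, a zig-zag of the same type the paper already acknowledges.
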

\begin{proof}
We may assume that $(\LMod_{\Com})_\infty$ and $(\LMod_{\Ass})_\infty$ are cofibrant 
(if not, then we can appeal to a zig-zag of corresponding algebra objects).
Fix a set $\mathscr{K}_q$ of objects of $\caC$ that generates $\caC_q$ by homotopy colimits.
Let $Q_q$ be a cofibrant replacement functor in  the Bousfield colocalization $C^{\mathscr{K}_q} \caM$ of $\caM$ with respect to $\mathscr{K}_q$.
There is a natural augmentation $\varepsilon_q\colon Q_q\to\mathrm{Id}$ and $Q_q$ models the functor $c_q$. 
For our purposes it suffices to verify the assumptions in Theorem~\ref{aug_alg} for $n=1$ and $J_1=\{m\}$.
We may assume $(\E,\M)$ is cofibrant and underlying fibrant using semi model structures (if not, we may appeal to a zig-zag). 
It remains to show there is a trivial fibration of simplicial set:
\begin{equation}
\label{equ03}
\xymatrix{\Map(\E \otimes \cdots \otimes \E \otimes Q_q \M \otimes \E \otimes
\cdots \otimes \E,Q_q \M) \ar[d] \\
\Map(\E \otimes \cdots \otimes \E \otimes Q_q \M \otimes \E \otimes
\cdots \otimes \E,\M)}
\end{equation}
By assumption, 
$\E$ and $Q_q \M$ are cofibrant and the map $Q_q \M \to \M$ is a fibration;
this proves (\ref{equ03}) is a fibration.
Moreover, $Q_q \M$ and $\M$ are fibrant so the simplicial enrichments coincide with the homotopy function complexes.
The claim follows since
$$
\E \otimes \cdots \otimes \E \otimes Q_q \M \otimes \E \otimes \cdots \otimes \E
$$
is $\mathscr{K}_q$-colocal by axiom (A5) since $\E\in\caC_0$ and $Q_q \M\in\caC_q$.

In the proof of the second claim, 
let $(\Mor_{\mathcal{O}})_{\infty}$ be a cofibrant resolution of the $4$-colored operad $\Mor_{\mathcal{O}}$ for $\mathcal{O}=\LMod_{\Ass}$ or 
$\mathcal{O}=\LMod_{\Com}$, 
cf.~Subsection~\ref{inftymaps}.
Now let $n=2$, $J_1=(m,0)$, $J_2=(m,1)$, $Q_1=Q_{q'}$ and $Q_2=Q_q$ in the formulation of Theorem \ref{aug_alg}.
Applying the extension $Q$ to the $(\Mor_{\mathcal{O}})_{\infty}$-algebra determined by the identity map on $(\E,\M)$ finishes the proof.
\end{proof}

\begin{theorem} \label{juyfff}
Suppose $\E$ is an $E_\infty$- resp.\ $A_\infty$-object in $\caM$ and $\M$ is an $E_\infty$- resp.\ $A_\infty$-module over $\E$.
Then for every $q\in\Z$ the object $c_q \M$ has a (homotopy unique) $E_{\infty}$- resp.\ $A_{\infty}$-module structure over $c_0 \E$ such that 
$c_q \M \to \M$ is a map of $E_{\infty}$- resp.\ $A_{\infty}$-modules. 
Moreover, 
for $q' \ge q$, 
$c_{q'} \M \to c_q \M$ is an $E_{\infty}$- resp.\ $A_{\infty}$-map over $c_0\E$.
\end{theorem}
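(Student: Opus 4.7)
The plan is to reduce the statement to Theorem \ref{thm:coloc_modules} by replacing $\E$ with its effective cover. The key observation is that the previous module preservation theorem requires effectivity of the base algebra, and Theorem \ref{jhdsdd} (equivalently, Proposition \ref{htgrfdd} applied to $\caN=\caM$, $\caD_0=\caC_0$) supplies exactly such an effective replacement together with a map of $E_\infty$- resp.\ $A_\infty$-algebras $c_0\E\to\E$.

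First, I would apply Theorem \ref{jhdsdd} to equip $c_0\E$ with a homotopy unique $E_\infty$- resp.\ $A_\infty$-structure and a map $c_0\E\to\E$ realizing the colocalization on the level of $E_\infty$- resp.\ $A_\infty$-algebras. Since $\M$ is an $E_\infty$- resp.\ $A_\infty$-module over $\E$, the pullback along $c_0\E\to\E$ endows $\M$ with the structure of an $E_\infty$- resp.\ $A_\infty$-module over the effective algebra $c_0\E$; on the operadic level, this is just precomposition of the map $\LMod_{\mathcal{O}}\to\End(\E,\M)$ of $2$-colored operads with the map induced by $c_0\E\to\E$ in the algebra color.

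Second, I would apply Theorem \ref{thm:coloc_modules} with $c_0\E$ in place of $\E$: since $c_0\E\in\caC_0$ is effective, the theorem furnishes a homotopy unique $E_\infty$- resp.\ $A_\infty$-module structure on $c_q\M$ over $c_0\E$ such that $c_q\M\to\M$ is a map of $E_\infty$- resp.\ $A_\infty$-modules over $c_0\E$. Composing with $c_0\E\to\E$ recovers the claimed comparison $c_q\M\to\M$ over the original $\E$. The compatibility statement for $q'\ge q$ is the content of the second half of Theorem \ref{thm:coloc_modules} applied to the same effective algebra $c_0\E$: the map $c_{q'}\M\to c_q\M$ is an $E_\infty$- resp.\ $A_\infty$-map over $c_0\E$, as required.

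The only real subtlety, and what I would expect to be the main obstacle, is verifying homotopy uniqueness. This is not a genuine obstacle since it follows the same template as in Theorem \ref{aug_alg}: two choices of $c_0\E$-module structures on $c_q\M$ both factor through $\End_{\mathcal{O}}(\varepsilon_{(c_0\E,\M)})$ via maps that agree after postcomposition with the trivial fibration $\tau$, so they are equal in the homotopy category of cofibrant colored operads. The one point that needs care is that the uniqueness is relative to the (homotopy unique) choice of $E_\infty$/$A_\infty$-structure on $c_0\E$ from Theorem \ref{jhdsdd}; any two such choices are related by a zig-zag of equivalences of algebras, and pulling back modules along this zig-zag does not affect the resulting $c_0\E$-module structure up to homotopy.
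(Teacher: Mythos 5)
Your proof is correct and follows exactly the paper's own (very terse) argument: restrict the $\E$-module structure on $\M$ along the $E_\infty$- resp.\ $A_\infty$-map $c_0\E\to\E$ supplied by Theorem \ref{jhdsdd}, then apply Theorem \ref{thm:coloc_modules} to the effective algebra $c_0\E$. Your added remarks on operadic restriction and on how uniqueness is inherited from the cofibrant-operad lifting argument are a reasonable elaboration of what the paper leaves implicit.
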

\begin{proof}
This follows from Theorem \ref{thm:coloc_modules} by first restricting to $c_0 \E$.
\end{proof}

\begin{theorem}
Suppose $\E$ is an effective $E_\infty$- resp.\ $A_\infty$-object in $\caM$ and $i\ge 1$.
Then $l_i\E$ has a (homotopy unique) $E_\infty$- resp.\ $A_\infty$-structure such that $\E\to l_i\E$ is a map of $E_\infty$- resp.\ $A_\infty$-objects.
\label{thm:loc_rings}
\end{theorem}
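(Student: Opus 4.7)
The plan is to deduce this from Proposition \ref{jyssrf} by specializing the pair setup of Section \ref{jytgef} to the graded setup of Section \ref{hygrf}. Take $\caN = \caM$ so that $\caD = \caC$, and set $\caD_0 = \caC_0$ and $\caD_1 = \caC_i$ for the given index $i \ge 1$. The inclusion $\caD_1 \subset \caD_0$ holds by iterated application of axiom (A2), and by Proposition \ref{htgrfd} combined with axiom (A3) both $\caC_0$ and $\caC_i$ are the homologically non-negative parts of accessible $t$-structures on $\caC$, so $(\caD_0, \caD_1)$ is a pair in the sense of Section \ref{jytgef}.

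Next I would verify that the compatibility axioms (B1) and (B2) hold in this specialization. Axiom (B1) follows because (A4) puts the tensor unit in $\caC_0$ and (A5) with $i=j=0$ shows that $\caC_0$ is closed under the tensor product. Axiom (B2) is a direct consequence of (A5): if $X \in \caC_0$ and $Y \in \caC_i$, then $X \otimes Y \in \caC_{0+i} = \caC_i$. Moreover, the localization functor $l_1$ associated to the pair $(\caD_0, \caD_1)$ coincides with the functor $l_i$ in the graded setup, since both are determined by the same accessible $t$-structure whose connective part is $\caC_i$.

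With these identifications in place, Proposition \ref{jyssrf} applies verbatim: its hypothesis is that the input is an $E_\infty$- resp.\ $A_\infty$-object in $\caN$ whose underlying object lies in $\caD_0$, which is precisely the effectiveness assumption $\E \in \caC_0$. The proposition then produces a (homotopy unique) $E_\infty$- resp.\ $A_\infty$-structure on $l_1\E = l_i\E$ together with a zig-zag of $E_\infty$- resp.\ $A_\infty$-morphisms realizing the coaugmentation $\E \to l_i\E$, which is exactly the conclusion claimed.

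I do not expect a genuine obstacle here: the substantive work, which is the trivial-fibration calculation $\End_\caO(R\E) \to \Hom_\caO(\E, R\E)$ powered by Lemma \ref{bfgdhs} and Theorem \ref{coaug_alg}, has already been carried out in the proof of Proposition \ref{jyssrf}. The only point that merits explicit mention is the matching of notation between the two axiomatic setups, in particular that the fibrant replacement in $L_\mathscr{S}\caM$ for $\mathscr{S} = \{0 \to K \mid K \in \mathscr{K}_i\}$ (with $\mathscr{K}_i$ generating $\caC_i$ by homotopy colimits) models $l_i$ by Lemma \ref{jyhdht}. Beyond this bookkeeping the result is an immediate corollary.
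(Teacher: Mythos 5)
Your proposal is correct and follows the same route as the paper, which simply applies Proposition~\ref{jyssrf} with $\caN=\caM$, $\caD=\caC$, $\caD_0=\caC_0$, and $\caD_1=\caC_i$. You spell out the verification of (B1)--(B2) and the notational matching that the paper leaves implicit, but there is no difference in substance.
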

\begin{proof}
Apply Proposition \ref{jyssrf} to $\caN=\caM$, $\caD=\caC$, $\caD_0 = \caC_0$ and  $\caD_1=\caC_i$.
\end{proof}

\begin{theorem}
Suppose $\E$ is an effective $E_\infty$- resp.\ $A_\infty$-object in $\caM$, 
$\M$ is an $E_\infty$- resp.\ $A_\infty$-module over $\E$ and $\M\in\caC_q$. 
Then, 
for $i\ge 1$ and $q\in\Z$, 
$l_{q+i}\M$ has a (homotopy unique) $E_\infty$- resp.\ $A_\infty$-module structure over $l_i \E$ such that $\M\to l_{q+i}\M$ is an 
$E_\infty$- resp.\ $A_\infty$-module map over $\E\to l_i\E$.
\label{thm:loc_modules}
\end{theorem}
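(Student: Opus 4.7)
The plan is to follow the template established by Propositions \ref{jyssrf} and \ref{htgrfdd}, by applying Theorem \ref{coaug_alg} to the $2$-colored operad $\LMod_{\mathcal{O}}$ (for $\mathcal{O}=\Ass$ or $\Com$) with colors $r$ (ring) and $m$ (module). First I would assume the relevant $\Sigma$-cofibrant resolution $(\LMod_{\mathcal{O}})_{\infty}$ is cofibrant and that $(\E,\M)$ is cofibrant and underlying fibrant in the appropriate semi model structure, appealing to a zig-zag of $E_{\infty}$- resp.\ $A_{\infty}$-maps if necessary. Fix generating sets $\mathscr{K}_{i}$ for $\caC_{i}$ and $\mathscr{K}_{q+i}$ for $\caC_{q+i}$, form $\mathscr{S}_{i}$ and $\mathscr{S}_{q+i}$ as in Lemma \ref{jyhdht}, and let $R_{i}$ and $R_{q+i}$ be fibrant replacement functors in the corresponding Bousfield localizations; these model $l_{i}$ and $l_{q+i}$, with coaugmentations $\eta_{i}$ and $\eta_{q+i}$. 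Take the extension $(R,\eta)$ of $(R_{i},\eta_{i})$ and $(R_{q+i},\eta_{q+i})$ over $\caM^{\{r,m\}}$ relative to $J_{1}=\{r\}$, $J_{2}=\{m\}$, so that $R(\E,\M)=(l_{i}\E,l_{q+i}\M)$.

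Next I would verify the trivial fibration hypothesis of Theorem \ref{coaug_alg}: for every $n\ge 0$ the map
\begin{equation*}
\Map\bigl(R\E\otimes\stackrel{(n)}{\cdots}\otimes R\E\otimes R\M,\,R\M\bigr)
\longrightarrow
\Map\bigl(\E\otimes\stackrel{(n)}{\cdots}\otimes\E\otimes\M,\,R\M\bigr)
\end{equation*}
is a trivial fibration of simplicial sets. The fibration part comes from cofibrancy of $\E,\M$, $R\E,R\M$ and the fact that $\E\to R\E$ and $\M\to R\M$ are cofibrations, together with fibrancy of $R\M$. Since everything in sight is both fibrant and cofibrant, simplicial enrichments compute homotopy function complexes. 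It therefore suffices to show the map
\begin{equation*}
\E\otimes\cdots\otimes\E\otimes\M\longrightarrow R\E\otimes\cdots\otimes R\E\otimes R\M
\end{equation*}
is an $l_{q+i}$-equivalence. This is where the graded tensor-ideal axiom (A5) enters, by way of Lemma \ref{ngfsd}: since $\E\in\caC_{0}$ and $\M\in\caC_{q}$, the map $\M\to R\M$ (an $l_{q+i}$-equivalence) tensored with $\E^{\otimes n}\in\caC_{0}$ remains an $l_{q+i}$-equivalence, and each $\E\to R\E$ (an $l_{i}$-equivalence) tensored with the appropriate object of $\caC_{q}$ yields an $l_{q+i}$-equivalence; composing these one factor at a time gives the desired equivalence. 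Theorem \ref{coaug_alg} then endows $l_{q+i}\M$ with a (homotopy unique) $l_{i}\E$-module structure for which $\M\to l_{q+i}\M$ is a module map.

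For the statement that $\M\to l_{q+i}\M$ is a map of $E_{\infty}$- resp.\ $A_{\infty}$-modules over $\E\to l_{i}\E$, I would upgrade the argument to the $4$-colored operad $\Mor_{\LMod_{\mathcal{O}}}$ of Subsection \ref{inftymaps}, exactly as in the second half of the proof of Theorem \ref{thm:coloc_modules}. Take $n=2$, $J_{1}=(r,0)\cup(r,1)$ carrying $R_{i}$ on both ring colors and $J_{2}=(m,0)\cup(m,1)$ carrying $R_{q+i}$ on both module colors, and apply the extension to the $(\Mor_{\LMod_{\mathcal{O}}})_{\infty}$-algebra determined by the identity on $(\E,\M)$. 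The trivial-fibration check reduces to the same tensor-by-elements-of-$\caC_{0}$ or $\caC_{q}$ analysis, with the target always a fibrant $l_{q+i}$-local object.

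The main obstacle is the weak-equivalence part of the trivial fibration condition, which is precisely where the indices $i$ and $q+i$ have to match up with the gradings of $\E$ and $\M$. The inequality $i\ge 1$ ensures we are localizing at a genuine piece of the filtration, and the graded tensor compatibility (axiom (A5), via Lemma \ref{ngfsd}) is what forces the tensor product $\E^{\otimes n}\otimes\M$ to land in $\caC_{q}$ so that the shifted localization $l_{q+i}$ is the correct target; without this bookkeeping the naive extension of coaugmented functors would fail to preserve the operadic structure.
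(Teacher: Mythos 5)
Your proof is essentially the paper's proof: apply Theorem~\ref{coaug_alg} to the $2$-colored operad $(\LMod_{\mathcal{O}})_\infty$ with $n=2$, $J_1=\{r\}$, $J_2=\{m\}$, $R_1$ a model for $l_i$ and $R_2$ a model for $l_{q+i}$, and establish the trivial-fibration hypothesis by reducing to showing that $\E\otimes\cdots\otimes\E\otimes\M\to l_i\E\otimes\cdots\otimes l_i\E\otimes l_{q+i}\M$ is an $l_{q+i}$-equivalence, which follows by iterating Lemma~\ref{ngfsd}. One small point of overkill: your second paragraph invoking the $4$-colored operad $\Mor_{\LMod_{\mathcal{O}}}$ is not needed here. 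Theorem~\ref{coaug_alg} already concludes that the coaugmentation $\eta_{(\E,\M)}\colon(\E,\M)\to(l_i\E,l_{q+i}\M)$ is a map of $(\LMod_{\mathcal{O}})_\infty$-algebras, and that is exactly the assertion that $\M\to l_{q+i}\M$ is a module map over $\E\to l_i\E$; the $\Mor$-operad device is reserved in the paper for comparing two stages of the tower (as in the ``Moreover'' clause of Theorem~\ref{thm:coloc_modules}), which does not occur in the present statement.
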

\begin{proof}
The proof is similar to the proof of Proposition \ref{jyssrf} by using the cofibrant operads $(\LMod_{\Ass})_\infty$ and $(\LMod_{\Com})_\infty$.
In this case we apply Theorem \ref{coaug_alg} to $n=2$, 
$J_1=\{r\}$, 
$J_2=\{m\}$, 
$R_1$ a fibrant replacement functor on the model category level modelling $l_i$ and $R_2$ a model for $l_{q+i}$. 
A main step in the proof is to show that
$$
\E \otimes \cdots \otimes \E \otimes \M \otimes \E \otimes \cdots \otimes \E
\longrightarrow 
l_i \E \otimes \cdots \otimes l_i \E \otimes l_{q+i}\M \otimes l_i \E \otimes \cdots \otimes l_i \E
$$
is an $l_{q+i}$-equivalence. 
This follows by applying Lemma \ref{ngfsd} repeatedly.
\end{proof}

\begin{corollary} \label{dgdgggg}
Suppose $\E$ is an $E_\infty$- resp.\ $A_\infty$-object in $\caM$ and $\M$ is an $E_\infty$- resp. $A_\infty$\nobreakdash-module over $\E$.
Then, 
for every $q\in\Z$, 
$s_q \M$ has a (homotopy unique) $A_\infty$- resp. $E_\infty$-module structure over $s_0 \E$ such that $c_q \M\longrightarrow s_q \M$ is an 
$E_\infty$- resp. $A_\infty$-module map over $c_0 A \to s_0 A$.
\end{corollary}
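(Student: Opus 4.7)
The plan is to realize $s_q\M = l_{q+1}(c_q\M)$ as an $s_0\E = l_1(c_0\E)$-module by chaining together the colocalization result of Theorem~\ref{juyfff} with the localization result of Theorem~\ref{thm:loc_modules}, exploiting the defining identities $s_q = l_{q+1}\circ c_q$ and $s_0 = l_1\circ c_0$ for the slice functors.

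First I would apply Theorem~\ref{juyfff} to the given $E_\infty$- (resp.\ $A_\infty$-) module $\M$ over $\E$ in order to equip $c_q\M$ with a (homotopy unique) $E_\infty$- (resp.\ $A_\infty$-) module structure over $c_0\E$, together with an $E_\infty$- (resp.\ $A_\infty$-) module map $c_q\M \to \M$ lying over $c_0\E \to \E$. Now $c_0\E$ lies in $\caC_0$ and is thus effective, while $c_q\M$ lies in $\caC_q$; these are precisely the hypotheses needed to feed $(c_0\E,\,c_q\M)$ into Theorem~\ref{thm:loc_modules} with $i=1$. That theorem then equips $l_{q+1}(c_q\M) = s_q\M$ with a (homotopy unique) $E_\infty$- (resp.\ $A_\infty$-) module structure over $l_1(c_0\E) = s_0\E$ and supplies the desired module map $c_q\M \to s_q\M$ over the ring map $c_0\E \to s_0\E$.

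Homotopy uniqueness of the module structure on $s_q\M$ and of the map $c_q\M \to s_q\M$ is inherited directly from the uniqueness assertions in Theorems~\ref{juyfff} and~\ref{thm:loc_modules}. I do not anticipate any genuine obstacle here; the only items to double-check are that the output $c_0\E$ of the colocalization step is effective and that $c_q\M$ belongs to $\caC_q$, both of which are immediate from the construction of the colocalization functors $c_0$ and $c_q$.
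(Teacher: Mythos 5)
Your proof is correct and is essentially the same as the paper's: Corollary~\ref{dgdgggg} is obtained by first invoking Theorem~\ref{juyfff} to equip $c_q\M$ with a $c_0\E$-module structure, and then invoking Theorem~\ref{thm:loc_modules} with $i=1$ (noting that $c_0\E$ is effective and $c_q\M\in\caC_q$) to pass to $s_q\M = l_{q+1}(c_q\M)$ as a module over $s_0\E = l_1(c_0\E)$. The uniqueness claims are inherited from the cited theorems exactly as you say.
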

\begin{proof}
This follows from Theorems \ref{juyfff} and \ref{thm:loc_modules} for $i=1$.
\end{proof}

\begin{theorem} \label{eeerhgh}
Suppose $\E_1 \to \E_2$ is a map between $E_\infty$- resp.\ $A_\infty$-objects in $\caM$. 
Then $c_0 \E_1 \to c_0 \E_2$ is a map between $E_\infty$- resp.\ $A_\infty$-objects. 
Moreover, 
for $i\ge 1$, 
$l_i c_0 \E_1 \to l_i c_0 \E_2$ is a map between $E_\infty$- resp.\ $A_\infty$-objects.
\end{theorem}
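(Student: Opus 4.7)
The plan is to transfer the structure of ``being a map of $E_\infty$- resp.\ $A_\infty$-objects'' through $c_0$ and then through $l_i$ by means of the $2$-colored operad $\Mor_{\mathcal{O}}$ whose algebras are precisely morphisms between $\mathcal{O}$-algebras, for $\mathcal{O}=\Com$ or $\mathcal{O}=\Ass$. Fix a $\Sigma$-cofibrant resolution $(\Mor_{\mathcal{O}})_{\infty}$, analogous to the $4$-colored version appearing in the proof of Theorem~\ref{thm:coloc_modules}; then the data of the given map $\E_1\to\E_2$ is encoded as a $(\Mor_{\mathcal{O}})_{\infty}$-algebra $(\E_1,\E_2)$ in $\caM^{\{1,2\}}$.

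For the first claim, I would apply Theorem~\ref{aug_alg} with $n=2$, $J_1=\{1\}$, $J_2=\{2\}$ and $Q_1=Q_2=Q_0$, where $Q_0$ is a cofibrant replacement functor in the Bousfield colocalization $C^{\mathscr{K}_0}\caM$ modelling $c_0$. After replacing $(\E_1,\E_2)$ by a cofibrant and underlying fibrant $(\Mor_{\mathcal{O}})_{\infty}$-algebra (appealing to a zig-zag if necessary), the only thing to check is that, for all $m\ge 0$ and all color sequences $i_1,\dots,i_m,j\in\{1,2\}$, the map
$$
\Map(Q_0\E_{i_1}\otimes\cdots\otimes Q_0\E_{i_m},\,Q_0\E_j)\longrightarrow\Map(Q_0\E_{i_1}\otimes\cdots\otimes Q_0\E_{i_m},\,\E_j)
$$
is a trivial fibration of simplicial sets. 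The argument of Proposition~\ref{htgrfdd} applies verbatim: cofibrancy of each $Q_0\E_{i_k}$ together with the fibration $Q_0\E_j\to\E_j$ gives the fibration property, while axioms (A4)--(A5) guarantee that the source tensor product lies in $\caC_0$ and is therefore $\mathscr{K}_0$-colocal, so that the $\mathscr{K}_0$-colocal equivalence $Q_0\E_j\to\E_j$ produces a weak equivalence. Theorem~\ref{aug_alg} then lifts $(c_0\E_1,c_0\E_2)$ to a (homotopy unique) $(\Mor_{\mathcal{O}})_{\infty}$-algebra, which is exactly the desired map of $E_\infty$- resp.\ $A_\infty$-objects.

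For the second claim, having arranged that $c_0\E_1\to c_0\E_2$ is a map of effective $E_\infty$- resp.\ $A_\infty$-objects, I would apply Theorem~\ref{coaug_alg} with the same colored-operad setup, this time taking $R_1=R_2=R_i$ a fibrant replacement functor in the Bousfield localization $L_{\mathscr{S}_i}\caM$ modelling $l_i$. The trivial fibration condition reduces to showing that
$$
\Map(R_ic_0\E_{i_1}\otimes\cdots\otimes R_ic_0\E_{i_m},\,R_ic_0\E_j)\longrightarrow\Map(c_0\E_{i_1}\otimes\cdots\otimes c_0\E_{i_m},\,R_ic_0\E_j)
$$
is a trivial fibration. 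The fibration property follows as in Proposition~\ref{jyssrf}, and the weak equivalence property follows by iterating Lemma~\ref{bfgdhs} applied to each factor $c_0\E_{i_k}\to R_ic_0\E_{i_k}$, which is an $l_i$-equivalence between objects of $\caC_0$; the tensor ideal hypothesis (B2) in the pair $(\caC_0,\caC_i)$ is again furnished by axiom (A5). Theorem~\ref{coaug_alg} then endows $(l_ic_0\E_1,\,l_ic_0\E_2)$ with the required morphism structure.

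The principal obstacle is the simultaneous treatment of the two color slots in $(\Mor_{\mathcal{O}})_{\infty}$, which requires checking the trivial fibration condition for all \emph{mixed} tensor products $\E_{i_1}\otimes\cdots\otimes\E_{i_m}$ of source and target copies rather than just iterated tensors of a single object. Fortunately axiom (A5) and Lemma~\ref{bfgdhs} are stable under such mixing, so no essentially new input is needed beyond the unary arguments already appearing in Propositions~\ref{htgrfdd} and~\ref{jyssrf}; the theorem then follows by composing the two applications of the extension construction.
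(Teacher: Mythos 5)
Your proposal is correct and follows essentially the same route as the paper: the paper's proof says to take cofibrant resolutions of $\Mor_{\Com}$ and $\Mor_{\Ass}$ and repeat the arguments of Propositions~\ref{htgrfdd} and~\ref{jyssrf} by colocalizing and then localizing at the colors, which is precisely what you spell out. The only differences are cosmetic (your color set $\{1,2\}$ versus the paper's $\{0,1\}\times C$) together with the correct observation that the trivial fibration must be checked on mixed tensor words, which axioms (A4)--(A5) and Lemma~\ref{bfgdhs} handle without new input.
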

\begin{proof}
The argument is a verbatim copy of the proofs of Propositions \ref{htgrfdd} and \ref{jyssrf}. 
Take cofibrant resolutions of the operads $\Mor_{\Com}$ and $\Mor_{\Ass}$ and proceed by colocalizing and localizing at the colors.
\end{proof}

\section{Motivic spectra}
\label{asgfcdj}

In this section we specialize to motivic stable homotopy theory.
First we recall some pertinent parts of the motivic setup introduced in Section \ref{jjhhhh}. 

The functor $f_q$ is the cofibrant replacement functor of the colocalized model structure of $\SH(S)$ with respect to 
$$
\scK(q)=\{\Sigma^{0,q}\Sigma^\infty_+ X[n] \mid X\in \Sm/S \mbox{ and }n,q \in \Z\}.
$$ 
The functor $l_q$ is the fibrant replacement functor of the localized model structure of $\SH(S)$ with respect to 
$$
\scS(q)=\{0\to \Sigma^{0,q}K \mid K\in \scK=\scK(0)\}.
$$
The $q$th slice functor $s_q$ is obtained by first colocalizing with respect to $\scK(q)$ and second localizing with respect to $\scS(q+1)$.
Our results in Section \ref{hfgdfg} specialize to the following assertions (see also the other examples in Section~\ref{jjhhhh}):
\begin{itemize}
\item[{\rm (i)}] 
The full slice $s_* \E$ of any motivic $E_\infty$-ring spectrum $\E$ is a motivic $E_\infty$-ring spectrum in $\Spt_T^\Sigma(S)^\Z$.
\item[{\rm (ii)}] 
The full slice functor $s_*$ lifts to an exact functor between module categories
$$
\D(\E) \longrightarrow \D(s_*\E).
$$
In particular, 
there exists an enhancement $\SH(S)\to\D(s_* \unit)$ of the full slice functor.
\item[{\rm (iii)}] 
The natural transformation
$$
s_*(\M) \wedge_{s_*(\E)} s_*(\sfN) \longrightarrow s_*(\M \wedge_\E \sfN)
$$
in~(\ref{motivic_transf}) is an isomorphism if $\M$ or $\sfN$ is $\E$-cellular.
\item[{\rm (iv)}] 
If $\E$ is a motivic $E_\infty$-ring spectrum, 
then $f_0 \E$ and $s_0 \E$ are motivic $E_\infty$-ring spectra.
\item[{\rm (v)}] 
If $\M$ is a module over a motivic $E_\infty$-ring spectrum $\E$, 
its $(q-1)$-connective cover $f_q \M$ is an $f_0 \E$-module and the slice $s_q \M$ is an $s_0 \E$-module.
\end{itemize}

\subsection{Motivic sphere and Eilenberg-Mac\,Lane spectra}
Let $\MZ$ be the integral motivic Eilenberg-Mac\,Lane spectrum.
If the base scheme is a perfect field, 
then $s_0\MZ=\MZ$, $s_i\MZ=0$ for $i\neq 0$.
Similarly, 
$s_0\mathsf{M}R=\mathsf{M}R$, $s_i\mathsf{M}R=0$ for $i\neq 0$, $R$ a commutative ring.
These computations follow since $s_0\unit=\MZ$ by \cite{Levine-slices}, \cite{Voevodsky-zeroslices}.
By applying $s_0$ to the unit map $\unit\to\MZ$ it follows that the $E_\infty$-structure on $\MZ$ obtained by our method coincides with the usual one. 
This also shows uniqueness of the $E_\infty$-structure on $\MZ$.

Over fields of characteristic zero, 
the full slice $s_*\unit$ can be computed using the Morel-Hopkins isomorphism identifying $\MZ$ with a regular quotient of $\MGL$:
$s_i\unit$ is (up to a shift) the motivic Eilenberg-Mac\,Lane spectrum on the complex of abelian groups which computes the $E_2$-term of the 
topological Adams-Novikov spectral sequence with internal degree $i$; 
see \cite{voevodsky.open} for further details.
(The forthcoming paper \cite{roendigs-spitzweck-oestvaer.slices-sphere} deals with the slices of the motivic sphere spectrum.)
It is an open problem whether the $E_\infty$-structure on $s_* \unit$ can be described in purely topological terms.

Over fields of characteristic zero, 
the spectrum $\mathsf{M}R$ is cellular \cite[Remark 6.6]{Dugger-Isaksen}. 
Therefore (\ref{motivic_transf}) specializes to the isomorphisms
$$
s_*(\mathsf{M}\mathbb{Z}/p \wedge \mathsf{M}\mathbb{Z}/p) \cong
\mathsf{M}\mathbb{Z}/p \wedge_{s_* \unit} \mathsf{M}\mathbb{Z}/p
$$
and
$$
s_*(\MZ \wedge \MZ) \cong \MZ \wedge_{s_* \unit} \MZ.
$$
These isomorphisms express dual rigid Steenrod algebras in terms of derived tensor products over the full slice $s_*\unit$ of the motivic sphere 
spectrum.

\subsection{Algebraic $K$-theory}
Let $\KGL$ be the motivic $K$-theory spectrum. 
Over perfect fields, 
$s_i \KGL = \Sigma^{2i,i} \MZ$ for $i\in\Z$ by \cite{Levine-slices}, \cite{Voevodsky-zeroslices}.
Thus $s_* \KGL = s_0\KGL[\beta^{\pm 1}]$, 
$\vert\beta\vert=(2,1)$, 
produces a natural $E_\infty$-structure on the graded object 
\begin{equation}
\label{equation:knckju}
\bigvee_{i \in \Z} \Sigma^{2i,i} \MZ.
\end{equation}
For an alternate way of producing an $E_\infty$-structure on (\ref{equation:knckju}) we refer to \cite{spitzweck.periodizable}. 
It is not known whether these graded $E_\infty$-objects are equivalent.

For the connective cover $\kgl=f_0\KGL$ of algebraic $K$-theory there is a naturally induced map of $E_\infty$-ring spectra 
(the identification $s_0 \kgl = \MZ$ requires a perfect base field)
$$
\kgl 
\longrightarrow 
s_0 \kgl = \MZ.
$$
Motivic obstruction theory can be used to show that $\kgl$ has a unique $E_\infty$-structure \cite{NSOKtheory}.

\subsection{Hermitian $K$-theory}
Let $\KQ$ denote the hermitian $K$-theory spectrum. 
Over perfect fields of characteristic different from two, 
one can show \cite{R-O}
$$
s_q \KQ 
= 
\left\{
\begin{array}{ll}
\Sigma^{2q,q} \MZ \vee \bigvee_{i<0} \Sigma^{2q+2i,q} \mathsf{M}\mathbb{Z}/2 & q\equiv 0 (2), \\
\bigvee_{i<0} \Sigma^{2q+2i+1,q} \mathsf{M}\mathbb{Z}/2 & q\equiv 1 (2).
\end{array}
\right.
$$
In the forthcoming paper \cite{spitzweck.kq} it is shown that $\KQ$ admits a (unique) $E_\infty$-structure if the ring of functions of the base scheme 
contains $1/2$ and $\sqrt{-1}$. 
One may ask whether the $E_\infty$\nobreakdash-structure on $s_0\KQ=\MZ\vee\bigvee_{i<0}\Sigma^{2i,0}\mathsf{M}\mathbb{Z}/2$ can be obtained by base 
change from the evident $E_\infty$-structure on the topological spectrum $\mathsf{H}\mathbb{Z} \vee \bigvee_{i<0}\Sigma^{2i}\mathsf{H}\mathbb{Z}/2$. 
Even the homotopy ring structure on $s_0\KQ$ is unknown at present.

Over perfect fields of characteristic different from two, 
the $q$th slice of the Witt-theory spectrum 
$$
s_q \mathsf{KT}= \bigvee_{i \in \Z} \Sigma^{q+2i,q} \mathsf{M}\mathbb{Z}/2
$$
is computed in \cite{R-O}.
By Bott periodicity for Witt-theory,
$$s_* \mathsf{KT}= s_0 \mathsf{KT}[\beta^{\pm 1}], 
\vert\beta\vert=(1,1).
$$

Since $\mathsf{KT}$ is obtained from $\KQ$ by inverting the motivic Hopf element $\eta$ the $E_\infty$\nobreakdash-structure on $\KQ$ induces one on 
$\mathsf{KT}$. 
(This follows by applying the techniques developed in \cite{spitzweck.kq}.) 
It is an open question whether the $E_\infty$-structure on $s_0 \mathsf{KT}= \bigvee_{i \in \Z} \Sigma^{2i}\mathsf{M}\mathbb{Z}/2$ is the obvious one.

\subsection{Algebraic cobordism}
Voevodsky \cite{voevodsky.open} has conjectured that the slices of the algebraic cobordism spectrum $\MGL$ are given by 
\begin{equation}
\label{equation:sgfdgd}
s_q \MGL 
= 
\Sigma^{2q,q} \MZ \otimes \MU_{2q}.
\end{equation}
Here $\MU$ denotes the topological cobordism spectrum.
Over fields of characteristic zero, 
this conjecture has been verified in \cite{spitzweck.rel} assuming the Hopkins-Morel isomorphism.

As homotopy algebras, 
it is expected that 
$$
s_* \MGL=\MZ[x_1,x_2,x_3,\ldots], 
$$ 
where $\vert x_i\vert=(2i,i)$.
It is an open question to determine the graded $E_\infty$-structure on $s_* \MGL$.

\appendix
\section{Colored operads and algebras} \label{jnhgd}
Colored operads originated in the work of Boardman-Vogt \cite{BV} on homotopy invariant algebraic structures on topological spaces.
For more recent accounts we refer to \cite{BM07}, \cite{CGMV}, \cite{Elmendorf-Mandell} and \cite{Leinster}. 
One may view colored operads as convenient generalizations of ordinary operads and categories.
In this appendix we recall the definitions and basic properties of colored operads and algebras that are used in the main body of the paper. 
Throughout, 
$\mathcal{V}$ is a cocomplete closed symmetric monoidal category with tensor product $\otimes$, 
unit $I$, internal hom $\Hom_{\mathcal{V}}(-,-)$ and initial object $0$.
Let $C$ be a set, 
whose elements are referred to as \emph{colors}.

\begin{definition}
A \emph{$C$\nobreakdash-colored collection} $\mathcal{K}$ in $\mathcal{V}$ consists of a set of objects 
$\mathcal{K}(c_1,\ldots, c_n;c)$ in $\mathcal{V}$ for each $(n+1)$-tuple of colors $(c_1,\ldots, c_n,c)$, 
equipped with a right action of the symmetric group $\Sigma_n$ given by maps
$$
\alpha^*\colon \mathcal{K}(c_1,\ldots, c_n;c)\longrightarrow
\mathcal{K}(c_{\alpha(1)},\ldots, c_{\alpha(n)};c),
$$
where $\alpha\in\Sigma_n$. (If $n=0$ or $n=1$, then $\Sigma_n$ is the trivial group by convention.)
\end{definition}

A \emph{map} of $C$\nobreakdash-colored collections $\varphi\colon\mathcal{K}\to\mathcal{L}$ is a family of maps in $\mathcal{V}$
$$
\varphi_{c_1,\ldots,c_n;c}\colon \mathcal{K}(c_1,\ldots,c_n;c)\longrightarrow
\mathcal{L}(c_1,\ldots, c_n; c)
$$
for every $n\ge 0$ and $(n+1)$-tuples $(c_1,\ldots, c_n,c)$, 
that is compatible with the symmetric group actions. 
We let $\Coll_C(\mathcal{V})$ denote the category of $C$\nobreakdash-colored collections in $\mathcal{V}$. 

\begin{definition}
A \emph{$C$\nobreakdash-colored operad} $\mathcal{O}$ in $\mathcal{V}$ comprises a $C$\nobreakdash-colored collection with unit maps 
$I\to\mathcal{O}(c;c)$ for all $c\in C$, 
and for every $(n+1)$-tuple of colors $(c_1,\ldots, c_n,c)$ and tuples
$$
(a_{1,1},\ldots, a_{1,k_1}; c_1),\ldots, (a_{n,1},\ldots, a_{n,k_n};
c_n),
$$
a \emph{composition product} map
$$
\xymatrix{
\mathcal{O}(c_1,\ldots, c_n;c)\otimes \mathcal{O}(a_{1,1},\ldots,
a_{1,k_1};c_1)\otimes\cdots\otimes \mathcal{O}(a_{n,1},\ldots,
a_{n,k_n};c_n)\ar[d]
\\ \mathcal{O}(a_{1,1},\ldots,a_{1,k_1},a_{2,1},\ldots,a_{2,k_2},\ldots,a_{n,1},\ldots,a_{n,k_n};c) }
$$
that is compatible with the symmetric group actions and subject to the usual associativity and unitary compatibility relations; 
see, for example, \cite[\S 2]{Elmendorf-Mandell}.
\end{definition}

A \emph{map of $C$\nobreakdash-colored operads} is a map between the underlying $C$\nobreakdash-colored collections that is compatible with the 
unit maps and the composition product maps.
We denote the category of $C$\nobreakdash-colored operads in $\mathcal{V}$ by ${\Oper}_C(\mathcal{V})$. 
There is a free-forgetful adjunction:
$$
\xymatrix{
F \colon \Coll_C(\mathcal{V}) \ar@<4pt>[r] & \ar@<1pt>[l]\Oper_C(\mathcal{V}) \colon U }
$$
Here $U$ is the forgetful functor, 
and the left adjoint $F$ is obtained by taking the free colored operad generated by a collection.

By forgetting the symmetric group actions in the previous definitions we arrive at the notions of
\emph{non\nobreakdash-symmetric colored collections} and \emph{non\nobreakdash-symmetric colored operads}.
There is an evident forgetful functor from $C$\nobreakdash-colored operads to non\nobreakdash-symmetric $C$\nobreakdash-colored operads. 
Its left adjoint $\Sigma$ is defined by 
\begin{equation}
\label{leftadjoint}
(\Sigma \mathcal{O})(c_1,\ldots, c_n;c)
=
\coprod_{\sigma\in\Sigma_n}\mathcal{O}(c_{\sigma^{-1}(1)},\ldots, c_{\sigma^{-1}(n)}; c).
\end{equation}

Algebras over colored operads are defined as follows: 
Denote by $\mathcal{V}^C$ the product category of copies of $\mathcal{V}$ indexed by the colors $C$, 
i.e., 
$\mathcal{V}^C=\prod_{c\in C}\mathcal{V}$. 
For every object $X=(X(c))_{c\in C}$ in $\mathcal{V}^C$, 
the \emph{endomorphism colored operad} $\End({\mathbf{X}})$ is the $C$-colored operad defined by
$$
\End(\mathbf{X})(c_1,\ldots, c_n; c)
=
\Hom_{\mathcal{V}}(X(c_1)\otimes\cdots\otimes X(c_n),X(c)).
$$
By convention, the tensor product is $I$ when $n=0$. 
The composition product is the ordinary composition and the $\Sigma_n$-action is defined by permutation of the factors.

\begin{definition}
Let $\mathcal{O}$ be a $C$-colored operad in $\mathcal{V}$. 
An \emph{$\mathcal{O}$-algebra} (or an \emph{algebra over $\mathcal{O}$}) is an object $\mathbf{X}=(X(c))_{c\in C}$ of $\mathcal{V}^C$ 
together with a map
$
\mathcal{O}\to \End(\mathbf{X})
$
of $C$-colored operads.
\end{definition}

Equivalently, since the monoidal category $\mathcal{V}$ is closed, an $\mathcal{O}$-algebra can be defined as a family of objects $X(c)$ 
in $\mathcal{V}$ for every $c\in C$ together with maps
$$
\mathcal{O}(c_1,\ldots, c_n; c)\otimes X(c_1)\otimes\cdots \otimes X(c_n)\longrightarrow X(c)
$$
for every $(n+1)$-tuple $(c_1,\ldots,c_n, c)$, 
that are compatible with the symmetric group actions, 
the units of $\mathcal{O}$ and subject to the usual associativity relations.

\begin{remark}
\label{symmetrization}
If $\mathcal{O}$ is a non\nobreakdash-symmetric $C$\nobreakdash-colored operad, 
then $\mathcal{O}$\nobreakdash-algebras are defined in the same way by forgetting the symmetric group action on the endomorphism operad 
${\End}({\bf X})$.
With $\Sigma$ as in (\ref{leftadjoint}),
there is a bijective correspondence between the $\mathcal{O}$\nobreakdash-algebra structures and the $\Sigma \mathcal{O}$\nobreakdash-algebra 
structures on ${\bf X}\in\mathcal{V}^C$, 
for every non\nobreakdash-symmetric $C$\nobreakdash-colored operad $\mathcal{O}$.
\end{remark}

A \emph{map of $\mathcal{O}$-algebras} $\mathbf{f}\colon\mathbf{X}\to \mathbf{Y}$ is given by a family of maps $(f_c\colon X(c)\to Y(c))_{c\in C}$ 
such that the diagram of $C$-colored collections
$$
\xymatrix{
\mathcal{O}\ar[r] \ar[d] & \End(\mathbf{X}) \ar[d] \\
\End(\mathbf{Y}) \ar[r]& \Hom(\mathbf{X},\mathbf{Y})
}
$$
commutes, 
where the top and left arrows are the given $\mathcal{O}$-algebra structures on $\mathbf{X}$ and $\mathbf{Y}$, 
respectively, 
and the $C$-colored collection $\Hom(\mathbf{X},\mathbf{Y})$ is defined by 
$$
\Hom(\mathbf{X},\mathbf{Y}(c_1,\ldots, c_n;c))
=\Hom_{\mathcal{V}}(X(c_1)\otimes\cdots\otimes X(c_n), Y(c)).
$$
The right vertical and lower horizontal maps are induced by the family of maps $(f_c)_{c\in C}$.

If $\mathcal{V}$ has pullbacks, 
then a map $\mathbf{f}$ of $\mathcal{O}$-algebras can be viewed as a map of $C$-colored operads
$$
\mathcal{O}\longrightarrow \End(\mathbf{f}),
$$
where the $C$\nobreakdash-colored operad $\End(\mathbf{f})$ is the pullback of the diagram of $C$\nobreakdash-colored collections:
\begin{equation}
\xymatrix{
\End(\mathbf{f})\ar@{.>}[r] \ar@{.>}[d] & \End(\mathbf{X}) \ar[d] \\
\End(\mathbf{Y}) \ar[r]& \Hom(\mathbf{X},\mathbf{Y})  }
\label{end_f}
\end{equation}
Note that $\End(\mathbf{f})$ inherits a $C$-colored operad structure from the $C$-colored operads $\End(\mathbf{X})$ and $\End(\mathbf{Y})$. 
We denote the category of $\mathcal{O}$-algebras by $\Alg_{\mathcal{O}}(\mathcal{V})$.

\begin{definition}
\label{def:herdht}
For a $C$-colored operad $\mathcal{O}$ and object $\mathbf{X}=(X(c))_{c\in C}$ of $\mathcal{V}^C$ the \emph{restricted endomorphism operad} 
$\End_{\mathcal{O}}(\mathbf{X})$ is defined by
\begin{equation}
\End_{\mathcal{O}}(\mathbf{X})(c_1,\ldots, c_n; c)=
\left\{
\begin{array}{ll}
\End(\mathbf{X},\mathbf{Y})(c_1,\ldots, c_n;c) & \mbox{if }\mathcal{O}(c_1,\ldots, c_n;c)\ne 0, \\[0.2cm]
0 & \mbox{otherwise}.
\end{array}
\right.
\label{endp}
\end{equation}
\end{definition}
A motivation for Definition \ref{def:herdht} is the evident inclusion of $C$-colored operads $\End_{\mathcal{O}} (\mathbf{X})\to\End(\mathbf{X})$, 
which implies that every map $\mathcal{O}\to \End(\mathbf{X})$ of $C$-colored operad factors uniquely through the restricted endomorphism operad 
$\End_{\mathcal{O}}(\mathbf{X})$.
Hence an $\mathcal{O}$-algebra structure on $\mathbf{X}$ is tantamount to a map of $C$-colored operads $\mathcal{O}\to\End_{\mathcal{O}}(\mathbf{X})$. 
The same is also true for non-symmetric colored operads, 
as can be seen by using the non-symmetric version of the restricted endomorphism colored operad.
Similarly, 
if $\mathbf{X}$ and $\mathbf{Y}$ are objects of $\mathcal{V}^C$ and $\mathcal{O}$ is a $C$-colored operad, 
let $\Hom_{\mathcal{O}} (\mathbf{X},\mathbf{Y})$ denote the $C$-colored collection defined by  
\begin{equation}
\Hom_{\mathcal{O}}(\mathbf{X},\mathbf{Y})(c_1,\ldots, c_n; c)=
\left\{
\begin{array}{ll}
\Hom(\mathbf{X},\mathbf{Y})(c_1,\ldots, c_n;c) &\mbox{if }\mathcal{O}(c_1,\ldots, c_n;c)\ne 0,\\[0.2cm]
0 & \mbox{otherwise}. \\
\end{array}
\right.
\label{homp}
\end{equation}
For a map $\mathbf{f}\colon \mathbf{X}\to\mathbf{Y}$ of $\mathcal{O}$-algebras, 
denote by $\End_{\mathcal{O}}(\mathbf{f})$ the pullback (when it exists) of the restricted endomorphism operads of $\mathbf{X}$ and $\mathbf{Y}$ over 
$\Hom_{\mathcal{O}}(\mathbf{X},\mathbf{Y})$, as in~(\ref{end_f}).

If $\alpha\colon C\to D$ is a function between sets of colors, 
then any $D$-colored operad $\mathcal{O}$ pulls back to a $C$-colored operad $\alpha^*\mathcal{O}$ and there is a pair of adjoint functors 
(see~\cite[\S 1.6]{BM07}):
\begin{equation}
\label{coloradjoint1}
\xymatrix{
\alpha_{!} \colon {\Oper}_C(\mathcal{V})\ar@<4pt>[r] & \ar@<1pt>[l]{\Oper}_D(\mathcal{V}) \colon \alpha^* }
\end{equation}
The restriction functor $\alpha^*$ is defined as 
$$
(\alpha^* \mathcal{O})(c_1,\ldots,c_n;c)=\mathcal{O}(\alpha(c_1),\ldots, \alpha(c_n); \alpha(c)).
$$

A function $\alpha\colon C\to D$ defines also an adjoint pair between the corresponding categories of algebras for every for every $D$-colored operad 
$\mathcal{O}$ in $\mathcal{V}$:
$$
\xymatrix{
\alpha_{!} \colon {\Alg}_{\alpha^*\mathcal{O}}(\mathcal{V})\ar@<4pt>[r] & \ar@<1pt>[l]
{\Alg}_{\mathcal{O}}(\mathcal{V}) \colon\alpha^* }
$$
Here $\alpha^*$ is defined as follows: 
If ${\bf X}$ denotes an $\mathcal{O}$\nobreakdash-algebra with structure map $\gamma\colon \mathcal{O}\to \End({\bf X})$, 
then
$$
(\alpha^*{\bf X})(c)=X(\alpha(c))
$$
for all $c\in C$, with structure map defined by (\ref{coloradjoint1}),
namely
\begin{equation}
\label{alphastar}
\alpha^*\gamma\colon \alpha^*\mathcal{O}\longrightarrow \alpha^*\End({\bf X})=\End(\alpha^*{\bf X}).
\end{equation}

\subsection{Examples of colored operads}
\label{examples_operads}
If $C=\{c\}$, 
then a $C$-colored operad $\mathcal{O}$ is an ordinary operad, 
where we write $\mathcal{O}(n)$ instead of $\mathcal{O}(c,\ldots,c;c)$ with $n$ inputs, 
for $n\ge 0$. 
The \emph{associative operad} $\Ass$ is the one-color operad defined by $\Ass(n)=I[\Sigma_n]$ for $n\ge 0$, 
where $I[\Sigma_n]$ is the coproduct of copies of $I$ indexed by $\Sigma_n$. 
Note that $\Sigma_n$ acts freely on $I[\Sigma_n]$ by permutations. 
The \emph{commutative operad} $\Com$ is the one-color operad defined by $\Com(n)=I$ for $n\ge 0$. 
Algebras over $\Ass$ are associative monoids in $\mathcal{V}$, 
while algebras over $\Com$ are commutative monoids in $\mathcal{V}$.

\subsubsection{Modules over operad algebras}
\label{mulmod}
Let $\mathcal{O}$ be a (one\nobreakdash-colored) operad in $\mathcal{V}$ and $\modu_{\mathcal{O}}$ be the two-colored operad for $C=\{r,m\}$.
Its nonzero terms are
$$
\modu_{\mathcal{O}}(r,\stackrel{(n)}{\ldots}, r;r)=\mathcal{O}(n)
$$
for $n\ge 0$, and
$$
\modu_{\mathcal{O}}(c_1,\ldots, c_n;m)=\mathcal{O}(n)
$$
for $n\ge 1$, 
when exactly one $c_i$ is $m$ and the rest (if any) are equal to $r$. 
An algebra over $\modu_{\mathcal{O}}$ amounts to a pair $(R,M)$ of objects of $\mathcal{V}$, 
where $R$ is an $\mathcal{O}$\nobreakdash-algebra and $M$ a module over~$R$, 
i.e., 
an object equipped with a family of maps
$$
\mathcal{O}(n) \otimes R \otimes {\stackrel{(k-1)}{\cdots}} \otimes R \otimes M \otimes R \otimes {\stackrel{(n-k)}{\cdots}} \otimes R 
\longrightarrow 
M
$$
for 
$1\le k\le n$, 
subject to the usual equivariance, associativity and unit constraints.

If $\mathcal{O}=\Ass$, 
an algebra over $\modu_{\mathcal{O}}$ is a pair $(R,M)$, 
where $R$ is a monoid in~$\mathcal{V}$ and $M$ an $R$\nobreakdash-bimodule
($M$ acquires a right $R$\nobreakdash-action and a left $R$\nobreakdash-action that commute).
If $\mathcal{O}=\Com$, then $R$ is a commutative monoid in $\mathcal{V}$ and $M$ a module over $R$ (indistinctly left or right).
If $\alpha$ denotes the inclusion of $\{r\}$ into $\{r,m\}$, 
then $\alpha^*\modu_{\mathcal{O}} = \mathcal{O}$ for each operad $\mathcal{O}$, 
and $\alpha^*(R,M)=R$ on the level of the corresponding algebras.

As in \cite[\S 1.5.1]{BM07}, 
we note there exists a non\nobreakdash-symmetric colored operad for the notion of left modules. 
Suppose $\mathcal{O}$ is a non-symmetric (one-colored) operad and let $\LMod_{\mathcal{O}}$ be the non\nobreakdash-symmetric $C$\nobreakdash-colored 
operad with \hbox{$C=\{r,m\}$} defined by
$$
\LMod_{\mathcal{O}}(r,\stackrel{(n)}{\ldots}, r;r)=\mathcal{O}(n), \qquad \LMod_{\mathcal{O}}(r,\stackrel{(n)}{\ldots}, r,m;m)=\mathcal{O}(n+1)
$$
for $n\ge 0$, and zero otherwise. 
If $\mathcal{O}=\Ass$ (as a non\nobreakdash-symmetric operad),
the algebras over $\LMod_{\mathcal{O}}$ are pairs $(R, M)$ of objects of $\mathcal{V}$ where $R$ is a monoid and $M$ supports a left $R$-action.

\subsubsection{Maps of algebras over colored operads}
\label{mulmoralg}
Fix a $C$\nobreakdash-colored operad $\mathcal{O}$.
Let $D=\{0,1\}\times C$ and define the $D$\nobreakdash-colored operad $\Mor_{\mathcal{O}}$ by
$$
\Mor_{\mathcal{O}}((i_1,c_1),\ldots,(i_n,c_n);(i,c))=\left\{
\begin{array}{l} \mbox{$0$ if $i=0$ and $i_k=1$ for some $k$,} \\[0.2cm]
\mbox{$\mathcal{O}(c_1,\ldots, c_n;c)$ otherwise.}
\end{array}
\right.
$$
If ${\bf X}$ is an algebra over $\Mor_{\mathcal{O}}$, 
then ${\bf X}_0=(X(0,c))_{c\in C}$ and ${\bf X}_1=(X(1,c))_{c\in C}$ acquire $\mathcal{O}$\nobreakdash-algebra structures by restriction of colors.
The $\Mor_{\mathcal{O}}$\nobreakdash-algebra structure on $\bf X$ gives rise to a map of $\mathcal{O}$\nobreakdash-algebras 
${\bf f}\colon {\bf X}_0\longrightarrow {\bf X}_1$ as follows:
For $c\in C$, 
define $f_c\colon X(0,c)\to X(1,c)$ as the composite
\begin{equation}
\label{themap}
X(0,c)\longrightarrow \Mor_{\mathcal{O}}((0,c);(1,c))\otimes X(0,c)\longrightarrow X(1,c),
\end{equation}
where the first map is obtained by tensoring the unit $u_c\colon I\to \mathcal{O}(c;c)$ with $X(0,c)$.

Conversely, 
given $\mathcal{O}$\nobreakdash-algebras ${\bf X}_0$, ${\bf X}_1$ and a map of $\mathcal{O}$\nobreakdash-algebras ${\bf f}\colon{\bf X}_0\to{\bf X}_1$, 
there is a unique $\Mor_{\mathcal{O}}$\nobreakdash-algebra structure on ${\bf X}=\left(X_0(c),X_1(c)\right)_{c\in C}$ extending the given 
$\mathcal{O}$\nobreakdash-algebra structures and for which the map in~(\ref{themap}) coincides with $\bf f$.

\subsection{Semi model structure for colored operads}
Suppose $\mathcal{V}$ is a cofibrantly generated monoidal model category. 
The category of $C$-colored collections $\Coll_C(\mathcal{V})$ admits a cofibrantly generated model structure: 
$\mathcal{K}\to\mathcal{L}$ is a fibration or a weak equivalence if $\mathcal{K}(c_1,\ldots,c_n;c)\to \mathcal{L}(c_1,\ldots,c_n;c)$ 
is a fibration or a weak equivalence in $\mathcal{V}$, 
respectively, 
for every $(n+1)$-tuple of colors $(c_1,\ldots, c_n,c)$.  
This model structure can be transferred to a \emph{semi model structure} on $\Oper_C(\mathcal{V})$ using the free-forgetful 
adjunction:
\begin{equation}
\xymatrix{
F:\Coll_C(\mathcal{V}) \ar@<4pt>[r] & \ar@<1pt>[l] \Oper_C(\mathcal{V}):U }
\label{freeforgetful}
\end{equation}
A map is a fibration or a weak equivalence if its underlying map of $C$-colored collections is a fibration or a weak equivalence, 
respectively. 
The class of cofibrations is defined by the left lifting property with respect to the trivial fibrations.

In a semi model category the axioms of a model category hold except for the lifting and factorization axioms,
which are required to hold only for maps with cofibrant domain. 
Since the initial object of a semi model category is assumed to be cofibrant,  
trivial fibrations have the right lifting property with respect to all cofibrant objects. 
Semi model categories were first defined by Hovey in~\cite{Hov98}. 
The use of semi model structures for operads is due to Spitzweck~\cite{Spi}; 
see also~\cite[\S12]{Fre09}. 
A proof of the following result for operads with one color can be found in~\cite[Theorem~12.2.A]{Fre09} (cf.~\cite[Theorem 3.2]{Spi}). 
Our extension to colored operads can be proved using the same arguments.
\begin{theorem}
If $\mathcal{V}$ is a cofibrantly generated monoidal model category, 
the model structure on $\Coll_C(\mathcal{V})$ can be transferred via the free-forgetful adjunction (\ref{freeforgetful}) to a 
cofibrantly generated semi model structure on $\Oper_C(\mathcal{V})$, 
in which $\mathcal{O}\to\mathcal{O'}$ is a fibration or a weak equivalence if 
$\mathcal{O}(c_1,\ldots, c_n;c)\to \mathcal{O'}(c_1,\ldots, c_n; c)$ is a fibration or a weak equivalence in $\mathcal{V}$, 
respectively, 
for every $(c_1,\ldots, c_n, c)$.
\end{theorem}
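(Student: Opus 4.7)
The plan is to use a standard transfer-of-model-structure argument along the free-forgetful adjunction $F\dashv U$ in (\ref{freeforgetful}), in the semi model category variant due to Spitzweck and Fresse. First I would define the would-be generating sets: let $I$ and $J$ be the generating cofibrations and generating trivial cofibrations of $\Coll_C(\mathcal{V})$, obtained degreewise from those of $\mathcal{V}$ (indexed over all tuples $(c_1,\dots,c_n;c)$ together with the $\Sigma_n$-action); then take $FI$ and $FJ$ as the candidate generators in $\Oper_C(\mathcal{V})$. Fibrations and weak equivalences are declared by $U$-detection as in the statement, and cofibrations are forced by the left lifting property against trivial fibrations. Smallness of domains of $FI$ and $FJ$ follows from smallness in $\mathcal{V}$ and the fact that $U$ preserves filtered colimits.

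The core of the argument, exactly as in \cite[Theorem 12.2.A]{Fre09} in the one-colored case, is to show that transfinite compositions of pushouts of maps in $FJ$ along maps from a cofibrant $C$-colored operad are sent by $U$ to weak equivalences in $\Coll_C(\mathcal{V})$. To do this, I would analyze a pushout
\[
\xymatrix{F\mathcal{K} \ar[r] \ar[d]_{Ff} & \mathcal{O} \ar[d] \\ F\mathcal{L} \ar[r] & \mathcal{O}[f]}
\]
for a (trivial) cofibration $f\colon\mathcal{K}\to\mathcal{L}$ of $C$-colored collections and a cofibrant operad $\mathcal{O}$, by constructing the standard filtration of $U\mathcal{O}[f]$ indexed by $C$-colored rooted trees with $\mathcal{L}/\mathcal{K}$ decorations at some vertices and $\mathcal{O}$-decorations elsewhere. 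Each successive layer of the filtration is obtained from the previous one by a pushout in $\Coll_C(\mathcal{V})$ governed by an iterated pushout-product built from $f$ and components of $\mathcal{O}$, with a $\Sigma_n$-action coming from vertex labellings.

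The hard step, and the reason one obtains only a \emph{semi} model structure, is to verify that these filtration layers are (trivial) cofibrations in $\Coll_C(\mathcal{V})$. The argument needs the $\Sigma_n$-actions appearing on the tree-indexed pushout-products to be suitably free on the non-identity vertices, which is precisely what the $\Sigma$-cofibrancy hypothesis on $\mathcal{O}$ secures; then the pushout-product axiom and monoidality of $\mathcal{V}$ deliver the required (trivial) cofibration at each stage, and transfinite composition gives the global statement. The colored aspect adds nothing essentially new beyond decorating each edge of the tree with an element of $C$ and taking suitable coproducts over colorings, so the combinatorics is a direct extension of Fresse's one-color analysis.

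Finally, with this key technical ingredient in hand, the usual verification of Quillen's semi model category axioms goes through: the small object argument on $FJ$ yields factorizations for maps out of cofibrant operads into trivial cofibrations followed by fibrations, the small object argument on $FI$ yields the other factorization, the lifting axioms for maps from cofibrant operads are immediate from the definitions, and the two-out-of-three and retract properties are inherited from $\Coll_C(\mathcal{V})$ through $U$. Cofibrant generation is then part of the construction, since $FI$ and $FJ$ serve as the generating sets by design.
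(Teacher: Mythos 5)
Your proposal is correct and takes essentially the same route as the paper, which gives no proof of its own beyond citing Fresse \cite[Theorem~12.2.A]{Fre09} and Spitzweck \cite[Theorem~3.2]{Spi} for the one-colored case and asserting that the same arguments extend to colored operads. Your sketch of the tree-indexed filtration of free pushouts, the role of $\Sigma$-cofibrancy (cofibrancy in $\Coll_C(\mathcal{V})$, which for general $\mathcal{V}$ is weaker than literal freeness of the $\Sigma_n$-actions) in making each filtration stage a (trivial) cofibration via the pushout-product axiom, and the remark that the colored setting only decorates the tree combinatorics with elements of $C$, is exactly the content of that cited argument.
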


\begin{remark}
If the unit of $\mathcal{V}$ is cofibrant and there exist a symmetric monoidal fibrant replacement functor and an interval with a coassociative 
and cocommutative comultiplication, 
then (\ref{freeforgetful}) defines a model structure on $\Oper_C(\mathcal{V})$ (see~\cite[Theorem~2.1]{BM07}). 
These conditions hold for the categories of topological spaces, 
simplicial sets and chain complexes, 
but not for symmetric spectra 
(see~\cite{GV10} for a discussion of the latter case).
\end{remark}

A $C$-colored operad $\mathcal{O}$ is \emph{$\Sigma$-cofibrant} if the underlying map of the unique map from the initial $C$-colored operad to 
$\mathcal{O}$ is a cofibration of collections. 
If $\mathcal{O}$ is $\Sigma$-cofibrant, 
then all unit maps $I\to\mathcal{O}(c;c)$ are cofibrations in $\mathcal{V}$. 
The categories of algebras over $\Sigma$-cofibrant $C$-colored operads admit transferred model structures by \cite[Theorem 12.3.A]{Fre09}, 
\cite[Theorem 4.7]{Spi}.

\begin{theorem}
Suppose $\mathcal{V}$ is a cofibrantly generated monoidal model category and $\mathcal{O}$ is a $\Sigma$\nobreakdash-cofibrant $C$-colored operad. 
Then the model structure on $\mathcal{V}^C$ can be transferred via the free-forgetful adjunction
$$
\xymatrix{
F_P \colon \mathcal{V}^C\ar@<4pt>[r] & \ar@<1pt>[l]\Alg_{\mathcal{O}}(\mathcal{V})\colon U_P }
$$
to a cofibrantly generated semi model structure on $\Alg_{\mathcal{O}}(\mathcal{V})$. 
\end{theorem}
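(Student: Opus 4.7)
The plan is to apply the standard transfer principle (Kan's lemma) along the free-forgetful adjunction $F_P \dashv U_P$, adapted to the semi model category setting as in Spitzweck~\cite{Spi} and Fresse~\cite{Fre09}. Declare a map $\mathbf{f}$ in $\Alg_{\mathcal{O}}(\mathcal{V})$ to be a weak equivalence, respectively fibration, if and only if $U_P(\mathbf{f})$ is so in $\mathcal{V}^C$, and take the generating (trivial) cofibrations to be $F_P(I)$ and $F_P(J)$, where $I$ and $J$ are the generating cofibrations and trivial cofibrations of the (componentwise) model structure on $\mathcal{V}^C$. Cofibrations are defined by the left lifting property against trivial fibrations. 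What needs to be verified are the smallness conditions and the key acyclicity statement: any transfinite composition of pushouts of maps in $F_P(J)$ is sent by $U_P$ to a weak equivalence. Smallness reduces to the fact that $U_P$ preserves filtered colimits (being a right adjoint whose left adjoint commutes with colimits in a controlled way), combined with smallness in $\mathcal{V}^C$.

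The heart of the proof is the analysis of a single pushout square in $\Alg_{\mathcal{O}}(\mathcal{V})$ of the form
\begin{equation*}
\xymatrix{
F_P(K) \ar[r] \ar[d] & \mathbf{A} \ar[d] \\
F_P(L) \ar[r] & \mathbf{A}',
}
\end{equation*}
where $K \to L$ lies in $J$. The plan is to construct a functorial filtration $\mathbf{A} = \mathbf{A}'_0 \to \mathbf{A}'_1 \to \cdots \to \colim_n \mathbf{A}'_n = \mathbf{A}'$ on the underlying $C$-indexed object, built from iterated tensor products involving $\mathcal{O}$, $K$, $L$, and $\mathbf{A}$, so that each successive stage $\mathbf{A}'_{n-1} \to \mathbf{A}'_n$ is a pushout along a map assembled from the pushout-products $(K \to L)^{\square n}$ acted on by $\Sigma_n$ via $\mathcal{O}$. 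This is the standard ``tree filtration'' or ``operadic calculus'' construction, carried out in the colored setting componentwise at each $c \in C$.

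The main obstacle, and the point where the hypothesis of $\Sigma$-cofibrancy enters essentially, is showing that each stage $\mathbf{A}'_{n-1} \to \mathbf{A}'_n$ is a weak equivalence in $\mathcal{V}^C$. The key input is that $\mathcal{O}(c_1,\ldots,c_n;c)$ is $\Sigma$-cofibrant, so that taking its orbits against a $\Sigma_n$-equivariant trivial cofibration $(K \to L)^{\square n}$ yields a trivial cofibration in $\mathcal{V}$. This combines the monoidal model category axiom (pushout-product) with a free $\Sigma_n$-action argument on the indexing operadic piece, and then one concludes by transfinite composition and the two-out-of-three property that $U_P(\mathbf{A} \to \mathbf{A}')$ is a trivial cofibration. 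An analogous but simpler argument handles pushouts along $F_P(I)$ to produce cofibrations.

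Finally, one explains why only a semi model structure is obtained. Factorizations and lifting are established by running the small object argument starting from a cofibrant algebra: one needs the initial stage of each filtration above to be cofibrant so that the pushout-product computation makes sense homotopically, which in general fails if the source algebra is not cofibrant over the unit $F_P(0)$. Hence factorization and lifting axioms hold only for maps with cofibrant domain, which is precisely the definition of a semi model category. The initial object $F_P(0)$ of $\Alg_{\mathcal{O}}(\mathcal{V})$ is cofibrant by construction, so the axioms of~\cite{Hov98} and~\cite{Spi} are satisfied; cofibrant generation follows from the fact that the generating sets are $F_P(I)$ and $F_P(J)$.
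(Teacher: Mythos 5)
The paper does not give its own proof of this theorem: it is stated as a black box with citations to Fresse (Theorem 12.3.A of \cite{Fre09}) and Spitzweck (Theorem 4.7 of \cite{Spi}), with the remark that the one-colored proofs extend to the colored setting by the same arguments. Your sketch is precisely the argument used in those references --- transfer along $F_P \dashv U_P$, operadic filtration of pushouts along free maps, pushout-product calculus, and the orbit/free-$\Sigma_n$-action step where $\Sigma$-cofibrancy of $\mathcal{O}$ enters --- so it is faithful to the intended proof, and the overall structure is sound.

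Two small points worth tightening. First, the justification you give for $U_P$ preserving filtered colimits is garbled: being a right adjoint gives preservation of limits, not colimits. The correct reason is that the monad $U_P F_P$ is built from (colored) tensor powers, coproducts, and $\Sigma_n$-orbits, all of which commute with filtered colimits, so the monad is filtered-colimit-preserving and hence so is $U_P$ from its algebras; smallness then transfers as you say. Second, in the acyclicity step you should say explicitly that the object carrying the free $\Sigma_n$-action is not $\mathcal{O}$ itself but the enveloping operad $\mathcal{O}_{\mathbf{A}}$ of the source algebra $\mathbf{A}$ (or its colored analogue); $\Sigma$-cofibrancy of $\mathcal{O}$ together with cofibrancy of the underlying object of $\mathbf{A}$ is what guarantees $\mathcal{O}_{\mathbf{A}}(c_1,\ldots,c_n;c)$ is cofibrant in the projective model structure on $\Sigma_n$-objects, and this is exactly why the argument only works when $\mathbf{A}$ has cofibrant underlying object --- which is the source of the semi-ness, as you correctly observe.
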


Let $\mathcal{O}$ be a $C$-colored operad in $\mathcal{V}$. 
Denote by $\mathcal{O}_{\infty}$ a $\Sigma$-cofibrant replacement of $\mathcal{O}$ in the semi model category $\Oper_C(\mathcal{V})$.
That is, 
$\mathcal{O}_{\infty}$ is $\Sigma$-cofibrant and there is a trivial fibration $\mathcal{O}_{\infty}\to\mathcal{O}$.
As usual, 
$A_{\infty}$ is a $\Sigma$-cofibrant resolution of $\Ass$ and $E_{\infty}$ a $\Sigma$-cofibrant resolution of $\Com$.
A weak equivalence between $\Sigma$-cofibrant $C$-colored operads induces a Quillen equivalence between the categories of algebras; 
see \cite[Theorem 12.5.A]{Fre09} and \cite{Spi}.

\begin{theorem}
Let $\varphi \colon\mathcal{O}\to\mathcal{O'}$ be a weak equivalence between  $\Sigma$-cofibrant $C$-colored operads. 
Then the adjoint pair given by  extension and restriction
$$
\xymatrix{
\varphi_!\colon \Alg_{\mathcal{O}}(\mathcal{V})\ar@<4pt>[r] & \ar@<1pt>[l]\Alg_{\mathcal{O'}}(\mathcal{V})\colon \varphi^* }
$$
is a Quillen equivalence. 
\end{theorem}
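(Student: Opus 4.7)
The plan is to follow the standard strategy for transferring Quillen equivalences along weak equivalences of $\Sigma$-cofibrant operads, as carried out for operads with one color in \cite[Theorem~12.5.A]{Fre09} and \cite{Spi}. First I would verify that $(\varphi_{!},\varphi^{*})$ is a Quillen adjunction. The restriction functor $\varphi^{*}$ trivially preserves fibrations and weak equivalences, since both are created in $\mathcal{V}^{C}$ and $\varphi^{*}$ does not alter the underlying $C$-colored object. It follows formally that $\varphi_{!}$ is a left Quillen functor.

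To upgrade this to a Quillen equivalence, since $\varphi^{*}$ reflects and preserves weak equivalences (being underlying-created), it suffices to prove that the unit
\[
\eta_{\mathbf{A}}\colon \mathbf{A}\longrightarrow \varphi^{*}\varphi_{!}(\mathbf{A})
\]
is a weak equivalence in $\Alg_{\mathcal{O}}(\mathcal{V})$ for every cofibrant $\mathcal{O}$-algebra $\mathbf{A}$. The next step is to verify this on free algebras. For $\mathbf{X}\in\mathcal{V}^{C}$, one has $\varphi_{!}(F_{\mathcal{O}}\mathbf{X})\cong F_{\mathcal{O}'}\mathbf{X}$, so $\eta_{F_{\mathcal{O}}\mathbf{X}}$ is induced, color by color and arity by arity, from the maps $\mathcal{O}(c_{1},\ldots,c_{n};c)\to\mathcal{O}'(c_{1},\ldots,c_{n};c)$ tensored with products $X(c_{1})\otimes\cdots\otimes X(c_{n})$ and quotiented by the symmetric group actions. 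The $\Sigma$-cofibrancy hypothesis makes these $\Sigma_{n}$-actions sufficiently free that the pushout-product/Kunneth argument shows $\eta_{F_{\mathcal{O}}\mathbf{X}}$ is a weak equivalence whenever $\mathbf{X}$ is cofibrant in $\mathcal{V}^{C}$.

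The final and most delicate step is to extend the result from free algebras to all cofibrant algebras. I would do this by transfinite induction along a cellular presentation $\mathbf{A}=\colim_{\lambda<\kappa}\mathbf{A}_{\lambda}$ of $\mathbf{A}$ as an iterated cell attachment of free generators, where each $\mathbf{A}_{\lambda+1}$ is obtained from $\mathbf{A}_{\lambda}$ as a pushout in $\Alg_{\mathcal{O}}(\mathcal{V})$ along a generating cofibration of $\mathcal{V}^{C}$. Both $\varphi_{!}$ and $\varphi^{*}$ commute with the relevant filtered colimits, so the inductive step reduces to the claim that $\eta$ is a weak equivalence on such a pushout whenever it is on $\mathbf{A}_{\lambda}$. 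The hard part will be controlling these pushouts: one needs a filtration of the pushout $\mathbf{A}_{\lambda}\amalg_{F_{\mathcal{O}}\mathbf{K}}F_{\mathcal{O}}\mathbf{L}$ whose successive layers are quotients of the form $\mathcal{O}(n)\otimes_{\Sigma_{n}}(\text{something built from }\mathbf{A}_{\lambda}, \mathbf{K}, \mathbf{L})$, together with the analogous filtration on the $\mathcal{O}'$-side, and then to invoke $\Sigma$-cofibrancy of both operads to conclude that the induced maps of filtration layers are weak equivalences. This is precisely the technical pushout analysis carried out in \cite[\S12]{Fre09} for one-colored operads; the generalization to $C$-colored operads is a straightforward but tedious book-keeping exercise using colored collections, and I would simply indicate that the same argument applies verbatim after replacing arities by tuples of colors.
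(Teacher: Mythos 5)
The paper does not actually prove this theorem---it simply cites \cite[Theorem~12.5.A]{Fre09} and \cite{Spi} and asserts (in the sentence immediately preceding the statement) that the result holds for colored operads. Your outline faithfully reproduces the strategy underlying those references: Quillen adjunction via right-adjoint reflection of weak equivalences, verification of the unit on free algebras via $\Sigma$-cofibrancy, and transfinite induction over cellular presentations with the pushout-filtration argument as the technical core. This is the correct and standard argument, and your remarks about $\varphi_{!}(F_{\mathcal{O}}\mathbf{X})\cong F_{\mathcal{O}'}\mathbf{X}$, preservation of filtered colimits by both functors, and the role of $\Sigma$-cofibrancy in controlling the $\Sigma_{n}$-coinvariants are all accurate; one small point worth making explicit is that since weak equivalences of $\mathcal{O}'$-algebras are created underlying, $\varphi^{*}$ preserves \emph{all} weak equivalences (not merely those between fibrants), which is why you may drop the fibrant replacement of $\varphi_{!}\mathbf{A}$ in the unit criterion.
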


Let $\mathcal{V}=\sSet$ and suppose $\mathcal{O}_1\to \mathcal{O}_2$ is a weak equivalence of $C$-colored operads in simplicial sets. 
For every tuple $(c_1,\ldots, c_n, c)$, 
we observe that $\mathcal{O}_2(c_1,\ldots, c_n;c)=0$ implies $\mathcal{O}_1(c_1,\ldots, c_n; c)=0$. 
In particular, 
this can be applied to $\mathcal{O}_{\infty}$ and $\mathcal{O}$.
We employ this fact in Section~\ref{hfgdfg}.

\subsection{$\mathcal{O}_{\infty}$-modules and $\mathcal{O}_{\infty}$-maps}
\label{inftymaps}
Suppose $\mathcal{O}$ is a (one\nobreakdash-colored) operad and $\mathcal{O}_{\infty}\to\mathcal{O}$ a $\Sigma$-cofibrant resolution. 
The $C$\nobreakdash-colored operad $\modu_{\mathcal{O}}$ is reviewed in Subsection~\ref{mulmod},
where $C=\{r,m\}$.
Let $\alpha$ denote the inclusion of $\{r\}$ into $C$.
If $(\modu_{\mathcal{O}})_{\infty}\to \modu_{\mathcal{O}}$ is a $\Sigma$-cofibrant resolution of $\modu_{\mathcal{O}}$, 
then
$$
\alpha^*(\modu_{\mathcal{O}})_{\infty}\longrightarrow \alpha^*\modu_{\mathcal{O}}=\mathcal{O}
$$
is a trivial fibration, since $\alpha^*$ preserves trivial fibrations.

If $\mathcal{O}_{\infty}$ is also cofibrant, 
then there is a lifting (unique up to homotopy) in the diagram:
\begin{equation}
\xymatrix{
& \alpha^*(\modu_{\mathcal{O}})_{\infty} \ar[d] \\
\mathcal{O}_{\infty} \ar@{.>}[ur] \ar[r] & \mathcal{O} }
\label{jajaja}
\end{equation}
If $(R,M)$ is an $(\modu_{\mathcal{O}})_{\infty}$\nobreakdash-algebra, 
then $R=\alpha^*(R,M)$ is an algebra over $\alpha^*(\modu_{\mathcal{O}})_{\infty}$ by (\ref{alphastar}), 
and hence an $\mathcal{O}_{\infty}$\nobreakdash-algebra by using the lifting in (\ref{jajaja}).
Although $M$ need not be a module,
we call it a \textit{$\mathcal{O}_{\infty}$\nobreakdash-module} over~$R$.

Now let $\mathcal{O}$ be a $C$\nobreakdash-colored operad with $\Sigma$-cofibrant resolution $\varphi\colon \mathcal{O}_{\infty}\to \mathcal{O}$.
The $D$-colored operad $\Mor_{\mathcal{O}}$ is reviewed in Subsection~\ref{mulmoralg}, 
where $D=\{0,1\}\times C$.
For $i\in\{0,1\}$, 
define the functions $\alpha_i\colon C\to D$ by $\alpha_0(c)=(0,c)$ and $\alpha_1(c)=(1,c)$. 
In both cases, $(\alpha_i)^*\Mor_{\mathcal{O}}=\mathcal{O}$. 
Thus, 
if $\Phi\colon \left(\Mor_{\mathcal{O}}\right)_{\infty}\to \Mor_{\mathcal{O}}$ is a $\Sigma$-cofibrant resolution of $\Mor_{\mathcal{O}}$ and 
$\mathcal{O}_{\infty}$ is cofibrant, 
there exist maps (in fact, weak equivalences) of $C$-colored operads
$$
\xymatrix{
& (\alpha_i)^*\left(\Mor_{\mathcal{O}}\right)_{\infty} \ar[d]^{(\alpha_i)^*\Phi} \\
\mathcal{O}_{\infty} \ar@{.>}[ur]^-{\lambda_i} \ar[r]^-{\varphi} & \mathcal{O} }
$$
for $i=0$ and $i=1$, unique up to homotopy, such that the triangle commutes.
Hence an algebra $\bf X$ over $\left(\Mor_{\mathcal{O}}\right)_{\infty}$ gives rise to a pair of $\mathcal{O}_{\infty}$\nobreakdash-algebras
$({\bf X}_0,{\bf X}_1)$ with additional structure linking them, 
which is weaker than a map of $\mathcal{O}_{\infty}$\nobreakdash-algebras. 
Specifically,
since the unit $I$ of $\mathcal{V}$ is cofibrant, we may choose, for each $c\in C$ a lifting in the diagram:
$$
\xymatrix{
& \left(\Mor_{\mathcal{O}}\right)_{\infty}((0,c);(1,c)) \ar[d] \\
I \ar[r]_-{u_c} \ar@{.>}[ur] & \Mor_{\mathcal{O}}((0,c);(1,c)) }
$$
Here $u_c$ is the map in (\ref{themap}). 
Although the lifting is not unique, 
it is unique up to homotopy. 
Therefore, 
the composite maps
$$
X(0,c)\longrightarrow \left(\Mor_{\mathcal{O}}\right)_{\infty}((0,c);(1,c))\otimes X(0,c)\longrightarrow X(1,c)
$$
yield a homotopy class of maps ${\bf X}_0\to{\bf X}_1$.
Each such map is called an \textit{$\mathcal{O}_{\infty}$\nobreakdash-map}.
Observe that if $\left(\Mor_{\mathcal{O}}\right)_{\infty}$ is cofibrant, 
there exists a lifting
$$
\xymatrix{
& \Mor_{\mathcal{O}_{\infty}} \ar[d] \\
\left(\Mor_{\mathcal{O}}\right)_{\infty} \ar[r]^-{\Phi} \ar@{.>}[ur]^{\Psi} & \Mor_{\mathcal{O}} }
$$
because the vertical arrow is a trivial fibration of $C$\nobreakdash-colored operads. 
Hence every map of $\mathcal{O}_{\infty}$\nobreakdash-algebras admits an $\mathcal{O}_{\infty}$\nobreakdash-map structure.
%

\section{Bousfield localizations and colocalizations} \label{ytrgvv}
We recall from \cite{Barwick}, \cite{Hirschhorn} the notions of left and right Bousfield localizations, 
here referred to as \emph{Bousfield localizations} and \emph{Bousfield colocalizations}, respectively. 
By a \emph{combinatorial} model category, 
we mean a locally presentable and cofibrantly generated model category.
Suppose $\mathcal{M}$ is a simplicial combinatorial model category.
Let $\map(X,Y)=\Map(QX, RY)$ denote a homotopy function complex, 
where $\Map(-,-)$ denotes the simplicial enrichment, 
$Q$ a functorial cofibrant replacement and $R$ a functorial fibrant replacement in $\caM$.

Let $\mathscr{S}$ be a class of maps in $\mathcal{M}$. 
An object $Z$ of $\mathcal{M}$ is \emph{$\mathscr{S}$-local} if it is fibrant and the induced map of function complexes
$$
f^*\colon\map(B,Z)\longrightarrow \map(A, Z)
$$
is a weak equivalence of simplicial sets for every map $f\colon A\to B$ in $\mathscr{S}$.

A map $g\colon X\to Y$ is an \emph{$\mathscr{S}$-local equivalence} if the induced map
$$
g^*\colon\map(Y,Z)\longrightarrow \map(X, Z)
$$
is a weak equivalence of simplicial sets for every $Z$ that is $\mathscr{S}$-local.

Let $\mathscr{K}$ be a class of objects in $\mathcal{M}$.
A map $g\colon X\to Y$ is a \emph{$\mathscr{K}$-colocal equivalence} if 
$$
g_*\colon\map(Z,X)\longrightarrow \map(Z, Y)
$$
is a weak equivalence of simplicial sets for every $Z$ in $\mathscr{K}$.

An object $Z$ of $\mathcal{M}$ is \emph{$\mathscr{K}$-colocal} if it is cofibrant and 
$$
f_*\colon\map(Z,A)\longrightarrow \map(Z, B)
$$
is a weak equivalence of simplicial sets for every $\mathscr{K}$-colocal equivalence $f\colon A\to B$.

\begin{definition}
\label{Bloc}
The Bousfield localization (if it exists) of $\mathcal{M}$ with respect to $\mathscr{S}$ is a simplicial model structure 
$L_{\mathscr{S}}\mathcal{M}$ on $\mathcal{M}$ such that:
\begin{itemize}
\item[\rm (i)] The cofibrations in $L_{\mathscr{S}}\mathcal{M}$ are the same as the cofibrations in $\mathcal{M}$.
\item[\rm (ii)] The weak equivalences in $L_{\mathscr{S}}\mathcal{M}$ are the $\mathscr{S}$-local equivalences.
\item[\rm (iii)] The fibrant objects in $L_{\mathscr{S}}\mathcal{M}$ are the $\mathscr{S}$-local objects.
\end{itemize}
\end{definition}

The $\mathscr{S}$-localization functor in $\mathcal{M}$ is defined as a functorial fibrant replacement $(L,\eta)$ in $L_{\mathscr{S}}\mathcal{M}$. 
Thus, 
for every $X$ in $\mathcal{M}$, 
the natural map $\eta_X\colon X\to LX$ is a cofibration and an $\mathscr{S}$-local equivalence, 
and $LX$ is $\mathscr{S}$-local.

\begin{definition}
\label{Bcoloc}
The Bousfield colocalization (if it exists) of $\mathcal{M}$ with respect to $\mathscr{K}$ is a simplicial model structure 
$C^{\mathscr{K}}\mathcal{M}$ on $\mathcal{M}$ such that:
\begin{itemize}
\item[\rm (i)] The fibrations in $C^{\mathscr{K}}\mathcal{M}$ are the same as the fibrations in $\mathcal{M}$.
\item[\rm (ii)] The weak equivalences in $C^{\mathscr{K}}\mathcal{M}$ are the $\mathscr{K}$-colocal equivalences.
\item[\rm (iii)] The cofibrant objects in $C^{\mathscr{K}}\mathcal{M}$ are the $\mathscr{K}$-colocal objects.
\end{itemize}
\end{definition}

The $\mathscr{K}$-colocalization functor in $\mathcal{M}$ is defined as a functorial cofibrant replacement $(K,\varepsilon)$ in 
$C^{\mathscr{K}}\mathcal{M}$. 
Thus, 
for every $X$ in $\mathcal{M}$, 
the natural map $\varepsilon_X\colon KX\to X$ is a fibration and a $\mathscr{K}$-colocal equivalence, and $KX$ is $\mathscr{K}$-colocal.

\begin{remark}
Bousfield localizations always exist if $\mathcal{M}$ is left proper and $\mathscr{S}$ is a set, 
and dually, 
Bousfield colocalizations exist if $\mathcal{M}$ is right proper and $\mathscr{K}$ is a set; 
see \cite{Barwick}, \cite{Hirschhorn}. 
Note that if $\mathcal{M}$ is a monoidal model category, 
then although $L_{\mathscr{S}}\mathcal{M}$ and $C^{\mathscr{K}}\mathcal{M}$ are monoidal as categories, 
they are not necessarily \emph{monoidal model} categories since the pushout\nobreakdash-product axiom does not hold in general.
\end{remark}
{\bf Acknowledgements.}
We gratefully acknowledge hospitality and support from IMUB at the Universitat de Barcelona in the framework of the NILS mobility project. 
The authors are partially supported by the RCN 185335/V30. 
The first author was supported by the MEC-FEDER grant MTM2010-15831 and by the Generalitat de Catalunya as a member of the team 2009 SGR 119.
\def\cprime{$'$}
\begin{small}

\begin{center}
Departament d'\`Algebra i Geometria, Universitat de Barcelona, Spain.\\
e-mail: jgutierrez@crm.cat
\end{center}
\begin{center}
Mathematisches Institut, Universit\"at Osnabr\"uck, Germany.\\
e-mail: oroendig@mathematik.uni-osnabrueck.de
\end{center}
\begin{center}
Fakult\"at f\"ur Mathematik, Universit\"at Regensburg, Germany.\\
e-mail: Markus.Spitzweck@mathematik.uni-regensburg.de
\end{center}
\begin{center}
Department of Mathematics, University of Oslo, Norway.\\
e-mail: paularne@math.uio.no
\end{center}
\end{small}
\end{document}